\documentclass[12pt]{amsart}
\usepackage{amsfonts, amssymb, amsgen, amsbsy, amstext, amsopn, amsmath,
latexsym, indentfirst,color,longtable}%pxfonts
\usepackage{verbatim} % package needed to activate the ``\renewenvironment{comment}'' command
%
%\usepackage{showkeys}
%
%\linespread{1.5}%doubledspaced
\usepackage{hyperref} 
\usepackage[american]{babel}
\setlength{\textheight}{8.2in}%{8.3in}
\setlength{\topmargin}{.18in}%{0.22in}
\setlength{\oddsidemargin}{.15in}%{.23in}
\setlength{\evensidemargin}{.15in}%{.23in}
\setlength{\textwidth}{6in}

%\newcommand{\avint}{\hbox{\vrule height3.5pt depth-2.8pt width4pt}\mkern-12mu\int\nolimits}

%%%% MATH EXPRESSIONS %%%

%^**
%\renewenvironment{comment}{\vS \noindent {\bf additional information:} $\!\maltese$}{$\maltese$ \vS}

%%% MATHBB %%%

\renewcommand{\H}{\mathbb{H}}
\newcommand{\B}{\mathbb{B}}

\newcommand{\G}{\mathbb{G}}

\newcommand{\N}{\mathbb{N}}

\newcommand{\R}{\mathbb{R}}

%%% MATHCAL %%%

\newcommand{\cA}{\mathcal{A}}

\newcommand{\cC}{\mathcal{C}}

\newcommand{\cF}{\mathcal{F}}

\newcommand{\cH}{\mathcal{H}}
\newcommand{\cI}{\mathcal{I}}
\newcommand{\cL}{\mathcal{L}}

\newcommand{\cO}{\mathcal{O}}
\newcommand{\cP}{\mathcal{P}}

\newcommand{\cS}{\mathcal{S}}

\newcommand{\cM}{\mathcal{M}}

\newcommand{\cV}{\mathcal{V}}

%%% MATHFRAK %%%

%%% ABBREVIATED LETTERS %%%

\newcommand{\ep}{\varepsilon}

\newcommand{\ph}{\varphi}

\newcommand{\sm}{\setminus}

%%% NEW LETTERS %%%%

%%% BOLD LETTERS %%%%

%%% BRACKETS WITH CHANGED SIZE %%%%

\newcommand{\lb}{{\big\lbrace}}
\newcommand{\rb}{{\big\rbrace}}

\newcommand{\LLs}{\bigg(}
\newcommand{\RRs}{\bigg)}

\newcommand{\ls}{\mbox{\large $($}}
\newcommand{\rs}{\mbox{\large $)$}}
\newcommand{\lls}{\big(}
\newcommand{\rrs}{\big)}

%%% ROMAN SYMBOLS IN MATH AMBIENT %%%

\newcommand{\diams}{\mbox{\rm diam}\;\!}
\newcommand{\diam}{\mbox{\rm diam}}

\renewcommand{\exp}{\mbox{\rm exp}\;\!}

\newcommand{\spn}{\mbox{\rm span}}

\newcommand{\dist}{\mbox{dist}}

\newcommand{\vol}{\mbox{\rm vol}}
\newcommand{\sgn}{\mbox{\rm sgn}\,}

\newcommand{\Id}{\mbox{\rm Id}}
\newcommand{\Lie}{\mathrm{Lie}}

%%% MATHRM %%%

\newcommand{\q}{\mathrm q}
\newcommand{\m}{\mathrm m}
\newcommand{\n}{\mathrm n}
\newcommand{\h}{\mathrm{h}}
\newcommand{\rr}{\mathrm{r}}
\newcommand{\NN}{\mathrm N}
\newcommand{\MM}{\mathrm M}
\newcommand{\QQ}{\mathrm Q}

%%% ABBREVIATED SYMBOLS OR SIZE CHANGED %%%% 

\newcommand{\bcup}{\bigcup}

\newcommand{\res}{\mbox{\LARGE{$\llcorner$}}}

\newcommand{\lra}{\longrightarrow}
\newcommand{\der}{\partial}

%%%% AVERAGE INTEGRALS %%%

%\newcommand{\avint}{\hbox{\vrule height3.5pt depth-2.8pt width4pt}\mkern-12mu\int\nolimits}

%%%% MATHEMATICAL OPERATORS %%%

%%%% COMMANDS %%%%

\newcommand{\ds}{\displaystyle}

\newcommand{\beqas}{\begin{eqnarray*}}
\newcommand{\eeqas}{\end{eqnarray*}}
\newcommand{\beqa}{\begin{eqnarray}}
\newcommand{\eeqa}{\end{eqnarray}}
\newcommand{\beq}{\begin{equation}}
\newcommand{\eeq}{\end{equation}}
\newcommand{\bce}{\begin{center}}
\newcommand{\ece}{\end{center}}

\newcommand{\pa}[1]{\left( #1 \right)}               % (  )
\newcommand{\qa}[1]{\left[ #1 \right]}               % [  ]
\newcommand{\set}[1]{\left\{ #1 \right\}}            % {  }
\newcommand{\pal}[1]{\left| #1 \right|}            % |  |
\newcommand{\ban}[1]{\left\langle  #1 \right\rangle}  % <  >
\newcommand{\dpar}[2]{\frac{\partial #1}{\partial #2}}

\newcommand{\qs}[1]{\quad\mbox{ #1} \quad}               % (  )

\newcommand{\qandq}{\quad\mbox{and}\quad}

%%%%%%%%%%%% FIGURA

%%%%%%%%%%%% SPAZIATURA

%{\vskip35mm\noindent}

%%%% THEOREMS %%%%

\newtheorem{The}{Theorem}[section]
\newtheorem{Lem}[The]{Lemma}
\newtheorem{Def}[The]{Definition}
\newtheorem{Rem}[The]{Remark}
\newtheorem{Pro}[The]{Proposition}
\newtheorem{Cor}[The]{Corollary}
\newtheorem{Exa}[The]{Example}
\newtheorem{Con}{Conjecure}

\newcommand{\bt}{\begin{The}}
\newcommand{\et}{\end{The}}
\newcommand{\bl}{\begin{Lem}}
\newcommand{\el}{\end{Lem}}
\newcommand{\bd}{\begin{Def}\rm}
\newcommand{\ed}{\end{Def}}
\newcommand{\br}{\begin{Rem}\rm}
\newcommand{\er}{\end{Rem}}
\newcommand{\bpr}{\begin{Pro}}
\newcommand{\epr}{\end{Pro}}
\newcommand{\bc}{\begin{Cor}}
\newcommand{\ec}{\end{Cor}}
\newcommand{\bj}{\begin{Con}}
\newcommand{\ej}{\end{Con}}
\newcommand{\bex}{\begin{Exa}}
\newcommand{\eex}{\end{Exa}}

\numberwithin{equation}{section}

\begin{document}

\title[Towards a theory of area in homogeneous groups]
{Towards a theory of area in homogeneous groups}
%[Area of submanifolds and the homogeneous tangent space]
%{Area of submanifolds and the \\ homogeneous tangent space }

%{Area of submanifolds in homogeneous groups}
%{On a unified approach to the intrinsic area of submanifolds in homogeneous groups}
%{Towards a unified approach to the area of submanifolds in homogeneous groups}
%{On a unified approach to the intrinsic area of submanifolds in homogeneous groups}
%{On the intrinsic area of submanifolds in homogeneous groups}
%{Towards a unified approach to the intrinsic area of submanifolds in homogeneous groups}
%
%
%
\author{Valentino Magnani}
\address{Valentino Magnani, Dipartimento di Matematica, Universit\`a di Pisa \\
Largo Bruno Pontecorvo 5 \\ I-56127, Pisa}
\email{valentino.magnani@unipi.it}
\date{\today}
%
%
%
%
%\thanks{The author acknowledges the support of the European Project ERC AdG *GeMeThNES*, Grant Agreement ${\mathrm N}^{\circ}$ 246923.}
\subjclass[2010]{Primary 28A75. Secondary 53C17, 22E30.}
\keywords{homogeneous group, area, spherical measure, submanifolds} 
\date{\today}

\begin{abstract}
A general approach to compute the spherical measure of submanifolds in homogeneous groups is provided. We focus our attention on the {\em homogeneous tangent space}, that is a suitable weighted algebraic expansion of the submanifold. This space plays a central role for the existence of blow-ups. Main applications are area-type formulae for new classes of $C^1$ smooth submanifolds. We also study various classes of distances, showing how their symmetries lead to simpler area and coarea formulas.
Finally, we establish the equality between spherical measure and Hausdorff measure on all horizontal submanifolds.
\end{abstract}

\maketitle

\tableofcontents

%\newpage

\section{Introduction}

The notion of surface area is fundamental in several areas of mathematics, such as geometric analysis, differential geometry and geometric measure theory. 
Area formulae for rectifiable sets in Riemannian manifolds and general metric spaces are well known \cite{Kir94}, \cite{AmbKir2000Rect}. 
When the metric space is not Riemannian, as a noncommutative homogeneous group (Section~\ref{sect:Graded}), even smooth sets need not be rectifiable in the standard metric sense \cite{Federer69}.
Such unrectifiability occurs when the Hausdorff dimension is greater than the topological dimension. In Carnot-Carath\'eodory spaces all smooth submanifolds ``generically'' have this dimensional gap \cite[Section 0.6.B]{Gromov1996}, so several well known tools of geometric measure theory do not apply. The basic question of computing an area formula for the Hausdorff measure remains a difficult task, even for smooth submanifolds.

Hausdorff measure plays a fundamental role in geometric measure theory, as it is well witnessed by the following Federer's words \cite{Federer1978Colloquium}. ``It took five decades, beginning with Carathéodory's fundamental paper on measure theory in 1914, to develop the intuitive conception of an $m$ dimensional surface as a mass distribution into an efficient instrument of mathematical analysis, capable of significant applications in the calculus of
variations. The first three decades were spent learning basic facts on how
subsets of $\R^n$ behave with respect to $m$ dimensional Hausdorff measure
$\cH^m$. During the next two decades this knowledge was fused with many techniques from analysis, geometry and algebraic topology, finally to produce new and sometimes surprising but classically acceptable solutions to old problems.''

Federer's comments remain extremely appealing when applied to the Hausdorff measure in nilpotent groups, that have a more complicated geometric structure.
The wider program of studying analysis and geometry in such groups and
general Carnot-Carath\'eodory spaces already appeared
in the seminal works by H\"ormander \cite{Hormander67},
Folland \cite{Fol75}, Stein \cite{SteinICM1977}, Gromov \cite{Gromov1996}, Rothschild and Stein \cite{RothschildStein76}, Nagel, Stein and Wainger \cite{NagelSteinWainger85}
and many others.
An impressive number of papers prove the always expanding interest on understanding geometric measure theory in such non-Euclidean frameworks.

 Among the many topics that have been studied, we mention
projection theorems, unrectifiability \cite{BDCFMT2013EffectProj}, \cite{BFMT2012ProjSlicing}, \cite{Hovila2014isotropicprojections}, \cite{FasslerHovila2016}, sets of finite h-perimeter, intrinsic regular sets, intrinsic differentiability, rectifiability \cite{Amb01}, \cite{FSSC3} \cite{FSSC4}, \cite{FSSC5}, \cite{FSSC6}, \cite{Mag5}, \cite{AKLD2009TangSpGro}, \cite{Mag14}, \cite{FranchiMarchiSerapioni2014IntDiff}, \cite{FranchiSerapioni2016IntrLip},  
\cite{Mag31}, \cite{MagnaniComiGaussGreen}, differentiation of measures and covering theorems, uniform measures, singular integrals \cite{Mag30}, \cite{LeDR2017Besicovitch}, \cite{ChoTys2015Marstrand}, \cite{ChoMagTys2015}, \cite{ChousionisLi2017Nonnegative} and minimal surfaces  \cite{BASCV2007Bernstein},  \cite{MSSC10}, \cite{MonVitHeightEst2015}, \cite{MonSteLipApp2017},   \cite{CHMY2005MinSurfPseudoherm},
\cite{RitRos2006RotationallyInv}, \cite{CapCitManfr2009RegHeisMinGraphs}, \cite{CapCitManfr2010SmoothLipMinGraphs}, \cite{DGNP2010MinSurfEmbedded}, \cite{HurtRitRos2010ClassifMinSurfHeis}, 
\cite{GalliRitore2015AreaStatC1}, \cite{Montef2015HminHyp},
\cite{SerraCassanoNicolussi2018}.
These works represent only a small part of a vaster and always growing literature.

Aim of the present work is to establish area formulas for the spherical measure of new classes of $C^1$ smooth submanifolds. One of the key tools is the intrinsic blow-up, performed by translations and dilations that are compatible with the metric structure of the group (Section~\ref{sect:notions}).
The blow-up is expected to exist on ``metric regular points''.
Precisely, these are those points having maximum pointwise degree \eqref{d:degree_point}, that is a kind of ``pointwise Hausdorff dimension''.
The pointwise degree was introduced by Gromov in \cite[Section 0.6.B]{Gromov1996}. It was subsequently rediscovered in \cite{Mag13Vit}, through an algebraic definition that also provides the density of the spherical measure.

However, pointwise degree does not possess enough information to describe the local behavior of the submanifold. We show how a more precise local geometric description  is available through the {\em homogeneous tangent space}, in short {\em h-tangent space}.
It is not difficult to find submanifolds of the same topological dimension, having the same pointwise degree at a fixed point, but whose corresponding h-tangent spaces are algebraically different (Remark~\ref{r:comparisondegreehtan}).

The construction of the h-tangent space is purely algebraic. It arises from a formal ``weighted homogeneous expansion'' of the standard tangent space (Definition~\ref{def:homtan}). Such a notion was introduced in \cite{Mag13Vit} for points of maximum degree. 
In the same paper it was proved that for $C^{1,1}$ smooth submanifolds the h-tangent space is always 
a homogeneous subgroup  (Definition~\ref{d:hsubgroup}). 
This kind of ``algebraic regularity'' was then used to establish the blow-up,
in addition to the $C^{1,1}$ smoothness.

New questions arise when we reduce the regularity of submanifolds to $C^1$.
Thus, studying in depth the algebraic structure of the h-tangent space becomes crucial. We focus our attention on {\em algebraically regular points}, i.e.
those points whose h-tangent space is a homogeneous subgroup.
We may a priori distinguish those submanifolds that at least at 
points of maximum degree have the h-tangent space in a specific family of subgroups.
In Sections~\ref{section:Legendrian} and~\ref{Sect:transversal} we focus our attention on horizontal submanifolds and transversal submanifolds, that have such an algebraic characterization.
For these submanifolds we can compute their spherical measure. 

Horizontal submanifolds are defined by having 
the h-tangent space everywhere isomorphic to a {\em horizontal subgroup} (Definition~\ref{d:horizontalsubgroup}).
The crucial relation is the inclusion 
\begin{equation}\label{eq:T_pinclusionH_pI} 
T_p\Sigma\subset H_p\G,
\end{equation}
for the submanifold $\Sigma$ at every point $p$, with horizontal fiber
$H_p\G$ defined in \eqref{d:H_pG}.
This condition is everywhere satisfied by all horizontal submanifolds
(Remark~\ref{r:horiz_points}).
To more easily detect and construct horizontal submanifolds, it is important 
to verify whether the everywhere validity of \eqref{eq:T_pinclusionH_pI}
implies that $\Sigma$ is a horizontal submanifold.

We notice that \eqref{eq:T_pinclusionH_pI} satisfied only at a single point $p$
does not imply that $p$ is horizontal (Example~\ref{ex:charact_Heis}).
If \eqref{eq:T_pinclusionH_pI} holds on an open subset of a $C^2$ submanifold $\Sigma$, then the approach of the classical Frobenius theorem implies that $\Sigma$ is horizontal (Proposition~\ref{pr:horizC^2smooth}).
For $C^1$ smooth submanifolds the situation is more delicate,
since commutators of vector fields are not defined. Surprisingly,
with $C^1$ regularity the classical proof of Frobenius theorem can be
replaced by a differentiability result. 
Indeed, the horizontality condition \eqref{eq:T_pinclusionH_pI} implies a suitable differentiability of the parametrization of $\Sigma$ (Theorem~\ref{t:horizC^1Smooth}), that is the well known as Pansu differentiability.
As a result, the area formulas \eqref{eq:area} and \eqref{eq:cS^N_dAll} hold 
for all $C^1$ smooth horizontal submanifolds 
satisfing \eqref{eq:T_pinclusionH_pI} at every point.
These submanifolds include for instance horizontal curves and Legendrian submanifolds.

Transversal submanifolds can be defined through {\em transversal points}, which are those points whose h-tangent space is a vertical subgroup (Definition~\ref{d:vertical_subgroup}).
Due to this transversality, with arguments similar to those of \cite[Section~4]{Mag12A}, one could see that generically every smooth submanifold is transversal.
All $C^1$ smooth hypersurfaces are special instances of transversal submanifolds.
Every transversal submanifold is characterized by having maximal Hausdorff dimension among all $C^1$ smooth submanifolds with the same topological dimension \cite{Mag26TV}. 
The same condition characterizes vertical subgroups with respect to homogeneous subgroups.

Our first result is Theorem~\ref{t:LocExpSurf}, that establishes the existence 
of the blow-up at an algebraically regular point of a $C^1$ smooth submanifold, under different conditions.
The proof of this theorem, besides including new cases as $C^1$ smooth submanifolds in two step groups and horizontal submanifolds in homogeneous groups, also simplifies 
the previous arguments.
The blow-up of a submanifold $\Sigma$ can be turned into a suitable differentiation of its intrinsic measure $\mu_\Sigma$ (Definition~\ref{d:SRmeasure}).
This measure, first introduced in \cite{Mag13Vit}, takes into account the degree $\NN$ of $\Sigma$ and the graded structure of the group.
Finding the relationship between $\mu_\Sigma$ and the spherical measure of $\Sigma$ corresponds to establish an area formula, due to the explicit form of $\mu_\Sigma$.

We use a suitable differentiation of the intrinsic measure, that works in metric spaces  \cite{Mag30}. 
In Section~\ref{sect:meastheo} we adapt the general differentiation to homogeneous groups. The point is to find an explicit formula for the Federer density $\theta^\NN(\mu_\Sigma,\cdot)$, that is defined by \eqref{eq:FDens} in any metric space and appears in the measure theoretic area formula
\eqref{eq:spharea}. We are lead to our first result.
%
%
%                       UPPER BLOW-UP THEOREM
%
%
\begin{The}[Upper blow-up theorem]\label{t:UpBlwC}
Let $\Sigma\subset\G$ be a $C^1$ smooth submanifold of topological dimension $\n$
and degree $\NN$. Let $p\in\Sigma$ be an algebraically regular point of maximum degree $\NN$
and let $A_p\Sigma$ be the $\n$-dimensional homogeneous tangent space. 
We assume that one of the following assumptions holds:
\begin{enumerate}
\item
$p$ is a horizontal point %$\NN=\n\le \m$ and $A_p\Sigma$ is a horizontal subgroup,
\item
$\G$ has step two % $\iota=2$ and $A_p\Sigma$ is a subgroup of $\G$,
\item
$\Sigma$ is a one dimensional submanifold
\item
$p$ is a transversal point.
\end{enumerate}
Then the Federer density satisfies the following formula 
\begin{equation}\label{e:UpBlwSub}
\theta^\NN(\mu_\Sigma,p)= \beta_d\lls A_p\Sigma\rrs\,.
\end{equation}
\end{The}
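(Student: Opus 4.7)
The plan is to reduce \eqref{e:UpBlwSub} to a blow-up analysis of the intrinsic measure $\mu_\Sigma$ around $p$, and then to identify the limit with $\beta_d(A_p\Sigma)$. Since $\theta^\NN(\mu_\Sigma,p)$ is, by \eqref{eq:FDens}, the limit of normalized $\mu_\Sigma$-masses of balls shrinking to $p$, and since $\mu_\Sigma$ has the explicit weighted-jacobian form of Definition~\ref{d:SRmeasure}, it is enough to exhibit an almost everywhere convergence of rescaled parametrizations together with uniform integrability of the corresponding rescaled jacobians.

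First I would fix a $C^1$ parametrization $\varphi\colon U\subset\R^\n\to\Sigma$ with $\varphi(0)=p$ and introduce the rescaled maps $\varphi_r(x)=\delta_{1/r}(p^{-1}\varphi(rx))$. A change of variables rewrites $\mu_\Sigma(B(p,r))/r^\NN$ as an integral, over $\varphi_r^{-1}(B(0,1))$, of the degree-$\NN$ jacobian associated with $\varphi_r$. The main analytic task is to show that this rescaled jacobian converges, uniformly on compact sets of the parameter space, to the constant jacobian of an affine parametrization of $A_p\Sigma$. A Vitali-type passage to the limit then gives $\theta^\NN(\mu_\Sigma,p)$ equal to the weighted $\n$-dimensional measure of $A_p\Sigma\cap B(0,1)$, which by definition of $\beta_d$ is the right-hand side of \eqref{e:UpBlwSub}. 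The algebraic regularity of $p$ enters precisely here, since it guarantees that $A_p\Sigma$ is a homogeneous subgroup, and hence that the limiting parametrization is linear in the graded sense.

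The four assumptions each supply a different mechanism for the uniform convergence of $\varphi_r$. Under (1), I would invoke Theorem~\ref{t:horizC^1Smooth} to upgrade pointwise horizontality into Pansu differentiability of $\varphi$ at the origin; this immediately forces the higher-layer components of $\tau_{p^{-1}}\varphi$ to vanish, after rescaling, strongly enough to produce the horizontal limit. Under (2), the step-two structure means that the Baker--Campbell--Hausdorff product $p^{-1}\cdot\varphi(rx)$ has its second-layer component controlled by the ordinary $C^1$ differential of the first-layer component of $\varphi$, so $\varphi_r$ converges by classical Taylor expansion. Under (3), $\Sigma$ is a curve and the rescaled parametrization is governed by a single $C^1$ derivative of a vector-valued map; the maximum-degree hypothesis selects the dominant component and yields convergence directly. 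Under (4), transversality forces $A_p\Sigma$ to be a vertical subgroup and one can split $\varphi$ along a homogeneous complement; the vertical directions carry the highest weights and are controlled by the ordinary derivative of $\varphi$, so the jacobian expansion truncates at the level permitted by $C^1$ smoothness.

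The main obstacle, and the reason for the case distinction, is that in a homogeneous group of step at least three a purely $C^1$ parametrization does not in general admit a weighted Taylor expansion up to the degree $\NN$: the higher-layer components of $\varphi$ are only continuous in the anisotropic sense, and consequently $\varphi_r$ need not converge to an affine map. Each of the four hypotheses removes this obstruction by a different route -- intrinsic differentiability in (1), algebraic truncation in (2), reduction of dimension in (3), confinement of $A_p\Sigma$ to a vertical subgroup in (4). In every case, the technical heart of the argument is the uniform convergence of the rescaled degree-$\NN$ jacobian; once this is in hand, \eqref{e:UpBlwSub} follows by dominated convergence on the parameter domain and the explicit computation of $\beta_d(A_p\Sigma)$ on the graded subgroup.
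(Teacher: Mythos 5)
There is a genuine gap at the very center of your argument: you treat the Federer density as if it were the centered upper density $\limsup_{r\to0^+}\mu_\Sigma(\B(p,r))/r^\NN$, and you assert that the resulting limit, namely the weighted measure of $A_p\Sigma\cap\B(0,1)$, ``by definition of $\beta_d$ is the right-hand side''. It is not. By Definition~\ref{d:sphericalfactor}, $\beta_d(A_p\Sigma)=\max_{d(u,0)\le1}\cH^\n_{|\cdot|}\lls\B(u,1)\cap A_p\Sigma\rrs$, a maximum over \emph{off-center} sections of the unit ball, and by \eqref{eq:FDens} the density $\theta^\NN(\mu_\Sigma,p)$ is a supremum over all small balls \emph{containing} $p$, not only those centered at $p$. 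A blow-up along centered balls only yields the $u=0$ section $\cH^\n_{|\cdot|}\lls\B(0,1)\cap A_p\Sigma\rrs$, which in general is strictly smaller than $\beta_d(A_p\Sigma)$ (they coincide only under extra symmetry or convexity of the ball, cf.\ Theorem~\ref{t:homConvBall}); and the measure-theoretic area formula \eqref{eq:spharea} requires precisely the Federer density, not the centered one. The paper's proof therefore rescales balls $\B(z,\tilde r)$ with $z\in\B(p,\tilde r)$, extracts a subsequence with $\delta_{1/r_k}(z_k^{-1}p)\to u_0^{-1}\in\B(0,1)$ to get the upper bound $\theta^\NN(\mu_\Sigma,p)\le\beta_d(A_p\Sigma)$, and for the matching lower bound deliberately tests against the shrinking balls $\B(p\,\delta_{\tilde r}v_0,\lambda\tilde r)$, where $v_0$ attains the maximum defining $\beta_d(A_p\Sigma)$ and $\lambda\to1^+$ at the end. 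Without this two-sided argument over off-center balls your proof establishes a different (and generally smaller) density.

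Apart from this, your reading of where the four hypotheses enter is essentially right and matches the paper: the obstruction is that a merely $C^1$ parametrization in step $\ge3$ admits no weighted expansion up to degree $\NN$, and each hypothesis kills the problematic intermediate-degree terms (in the paper this is the term $T_3$ in the proof of Theorem~\ref{t:LocExpSurf}) by a different mechanism. One small correction there: in case (1) the paper does not invoke Pansu differentiability inside the blow-up; it uses only the special block form \eqref{eq:matriceCcontHor} of the coefficient matrix at a horizontal point. Theorem~\ref{t:horizC^1Smooth} serves the separate purpose of showing that everywhere-tangency to $H\G$ implies horizontality. Also note that the rescaled jacobian need not converge uniformly as you claim; the paper only needs the pointwise expansion \eqref{eq:estim}, uniform boundedness of the rescaled preimages, and dominated convergence.
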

The degree of $\Sigma$ is the maximum integer $\NN$ among all pointwise degrees of $\Sigma$.
The number $\beta_d(A_p\Sigma)$ is the {\em spherical factor} (Definition~\ref{d:sphericalfactor}) associated to the h-tangent space $A_p\Sigma$ of $\Sigma$ at $p$.
Such a number amounts to the maximal area of the intersection of $A_p\Sigma$ with any metric unit ball that is not too far from the origin.
In Euclidean space, corresponding to the commutative group $\G\approx\R^\n$ with step one and distance $d_E(x,y)=|x-y|$, we get $\beta_{d_E}(A_p\Sigma)\equiv\omega_\n$, that is the volume of the unit ball in $\R^n$.
When the metric unit ball $\B(0,1)$ is convex a simple formula 
for $\beta_d(A_p\Sigma)$ is available on vertical subgroups.
\begin{The}\label{t:homConvBall}
If $d$ is a homogeneous distance whose metric unit ball $\B(0,1)$ is convex and 
$N\subset\G$ is an $\n$-dimensional vertical subgroup of $\G$, then 
\begin{equation}\label{eq:bdN}
 \beta_d(N)=\cH_{|\cdot|}^\n(N\cap\B)\,.
\end{equation}
\end{The}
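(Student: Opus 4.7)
The plan is to reduce the claim to a Brunn--Minkowski slicing argument for the symmetric convex body $\B\subset\R^q$. Working in exponential coordinates, I identify $\G$ with $\R^q$ so that inversion becomes negation ($x^{-1}=-x$); the symmetry $d(0,y)=d(0,y^{-1})$ satisfied by any homogeneous distance then gives $\B=-\B$, and together with the convexity hypothesis this makes $\B$ a \emph{symmetric convex body}. Since $N$ is vertical, in these coordinates $N$ is a linear subspace containing $V_2\oplus\cdots\oplus V_s$; fix a horizontal complement $H\subset V_1$ of $W:=N\cap V_1$, so that $\G=H\oplus N$ as vector spaces, with projection $\pi_H:\G\to H$.

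The first key step is to rewrite the metric slice $\B(x,1)\cap N=(x\cdot\B)\cap N$ as a Euclidean slice of $\B$. Applying Baker--Campbell--Hausdorff to $x^{-1}\cdot n$ for $n\in N$, every nontrivial iterated bracket lies in $V_2\oplus\cdots\oplus V_s\subset N$; tracking the $V_1$ and higher components shows that in exponential coordinates
\[
x^{-1}\cdot N \;=\; N-\pi_H(x),
\]
a Euclidean translate of $N$. Moreover, the differential $dL_x(y)$ of left translation, restricted to the tangent subspace $N$ of this coset, is in the graded basis unipotent with identity diagonal -- every off-diagonal entry comes from a bracket that strictly raises the degree and remains inside $N$. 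Hence $L_x|_{x^{-1}\cdot N}$ has unit Jacobian onto $N$, and
\[
\cH^\n_{|\cdot|}\bigl(\B(x,1)\cap N\bigr) \;=\; \cH^\n_{|\cdot|}\bigl(\B\cap(N-\pi_H(x))\bigr).
\]

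The claim now reduces to a classical Brunn fact: for a symmetric convex body $K\subset\R^q$ and a linear subspace $V$ of dimension $\n$, the function $v\mapsto\cH^\n_{|\cdot|}(K\cap(V+v))^{1/\n}$ descends to $\R^q/V$ and is even and concave on its support, so it attains its maximum at $v=0$. Applied to $K=\B$ and $V=N$, this gives $\cH^\n_{|\cdot|}(\B\cap(N-\pi_H(x)))\leq\cH^\n_{|\cdot|}(\B\cap N)$, with equality at $x=0$. Taking the supremum in the definition of $\beta_d(N)$ yields the upper bound $\beta_d(N)\leq\cH^\n_{|\cdot|}(N\cap\B)$, and the choice $x=0$ supplies the matching lower bound, proving \eqref{eq:bdN}.

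The main obstacle is the rewriting in the second paragraph: both the Euclidean affineness of the coset $x^{-1}\cdot N$ and the unipotence of $dL_x|_N$ depend crucially on $N$ being vertical, i.e.\ on $N$ absorbing every BCH bracket with an arbitrary element of $\G$. For a non-vertical homogeneous subgroup the coset need not be Euclidean-affine at all, and the naive Brunn--Minkowski argument would fail to translate back into information about the metric slices $\B(x,1)\cap N$. Once verticality is fully exploited, the rest is classical convex geometry.
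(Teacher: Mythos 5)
Your overall strategy is exactly the paper's: reduce the metric slice $\B(x,1)\cap N$ to a Euclidean translate $\B\cap(N-v)$ by combining the unit Jacobian of left translation on cosets of $N$ with the BCH formula, then use the symmetry $\B=-\B$ together with Brunn--Minkowski concavity of $v\mapsto\cH^\n_{|\cdot|}\lls\B\cap(N+v)\rrs^{1/\n}$ to locate the maximum at $v=0$. The paper packages precisely these steps as Lemma~\ref{l:vertical_meas_translation}, Lemma~\ref{l:hom_splitting} with \eqref{eq:v+N=vN}, and Theorem~\ref{t:ConvSect_n-dim}.

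There is, however, one concrete flaw in your setup. You assert that $N$ contains all layers of degree at least two (your $V_2\oplus\cdots\oplus V_s$) and accordingly choose the complement $H$ inside the first layer. By Definition~\ref{d:vertical_subgroup} a vertical subgroup is $N=N_\ell\oplus H^{\ell+1}\oplus\cdots\oplus H^\iota$ for some $\ell\in\set{1,\ldots,\iota}$; for $\ell\ge2$ the subgroup $N$ misses $H^1,\ldots,H^{\ell-1}$ entirely and meets $H^\ell$ only in $N_\ell$, so $H^2\oplus\cdots\oplus H^\iota\not\subset N$ and no linear complement of $N$ can sit inside the first layer. Both of your key claims then need repair: (i) the complement must be taken as the homogeneous subspace $V=H^1\oplus\cdots\oplus H^{\ell-1}\oplus S_\ell$, with $S_\ell$ complementing $N_\ell$ in $H^\ell$, as in the paper's proof; (ii) the fact that the BCH terms $c_j(v,n)$, $j\ge2$, land in $N$ must be deduced from $N$ being an ideal --- each iterated bracket contains the $N$-argument and $[\Lie(\G),\Lie(N)]\subset\cV_{\ell+1}\oplus\cdots\oplus\cV_\iota\subset\Lie(N)$ --- not from $N$ absorbing every layer of degree at least two. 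With these corrections your argument goes through unchanged and coincides with the paper's proof.
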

As an application, joining \eqref{eq:bdN}, Theorem~\ref{t:area} below and 
the results of \cite{FSSC8}, one can prove that on all 
transversal submanifolds the equality between spherical 
measure and centered Hausdorff measure holds, then including hypersurfaces.
The main tool in the proof of Theorem~\ref{t:homConvBall} 
is a suitable concavity property of the area of ``parallel sections'' of convex sets. 
We expect this property to be well known (Theorem~\ref{t:ConvSect_n-dim}).
For reader's convenience, we have provided a proof, 
that is an application of Brunn-Minkowski inequality.
Joining Theorem~\ref{t:UpBlwC} and the measure theoretic area formula \eqref{eq:spharea}, we arrive at our second main result.

\bt[Area formula for all homogeneous distances]\label{t:area}
Let $\Sigma\subset\G$ be a $C^1$ smooth $\n$-dimensional submanifold of degree $\NN$. Suppose that one of the following conditions hold.
\begin{enumerate}
\item
$\Sigma$ is a horizontal submanifold.
\item
$\G$ has step 2, every point of maximum degree is algebraically regular and points of lower degree are $\cS^\NN$ negligible.
\item
$\Sigma$ is a transversal submanifold.
\item
$\Sigma$ is one dimensional.
\end{enumerate}
Then for any Borel set $B\subset\Sigma$ the following area formula holds \beq\label{eq:area}
\mu_\Sigma(B)=\int_B \|\tau^{\tilde g}_{\Sigma,\NN}(p)\|_g\, d\sigma_{\tilde g}(p)
=\int_B \beta_d(A_p\Sigma)\, d\cS^\NN_0(p).
\eeq
\et 
We refer the reader to Section~\ref{sect:meastheo} for the definitions of the projected $\tilde g$-unit tangent $\n$-vector $\tau_{\Sigma,\NN}^{\tilde g}$ and 
the ``nonrenormalized'' spherical measure $\cS_0^\NN$.
Theorem~\ref{t:area} is the union of different results contained in Section~\ref{sect:SphericalMeasure} and Section~\ref{sect:HorizSubmanifolds}. 
The implication from (1) to \eqref{eq:area}
corresponds to Theorem~\ref{t:AreaHoriz}
and in this case $\n=\NN=\deg\Sigma$.
It follows that the area formula \eqref{eq:area}  holds for $C^1$ smooth submanifolds everywhere tangent to the horizontal subbundle, due to Theorem~\ref{t:horizC^1Smooth}.
In particular, we can are able to compute the spherical measure of all $C^1$ smooth Legendrian submanifoldd in any Heisenberg group.

The other implications of Theorem~\ref{t:area} all need a negligibility result for the set of points
of lower degree. If $\Sigma$ has degree $\NN$ greater than its topological dimension, we have to prove that the characteristic set 
\beq\label{eq:S_Sigma-R_SigmaI}
\cC_\Sigma=\set{p\in\Sigma: d_\Sigma(p)<\NN}
\eeq
is $\cS^\NN$ negligible.
The implication from assumption (2) to \eqref{eq:area}
is a consequence of Theorem~\ref{t:area2steps}.
By results of \cite{Mag12B}, when $\Sigma$ is $C^{1,1}$ smooth in a two step group
we have $\cS^\NN(\cS_\Sigma)=0$ and every point of maximum degree is algebraically regular \cite{Mag13Vit}. Thus, assumptions (2) are more general than the conditions required in \cite{Mag12B}. The validity of \eqref{eq:area} from hypothesis (3) is a consequence of Theorem~\ref{t:TransvSub}, where the $\cH^\NN$ negligibility of $\cC_\Sigma$ 
is a nontrivial fact \cite{Mag26TV}.
The implication from (4) to \eqref{eq:area} comes from Theorem~\ref{t:curves},
slightly extending the results of \cite{Mag21Korte}.

Let us point out that \eqref{eq:area} cannot be obtained through $C^{1,1}$ smooth approximation of $C^1$ submanifolds, since continuity theorems for the spherical measure require strong topological constraints. Additional efforts may arise to preserve the degree of the approximating submanifolds
and possible ``isolated submanifolds'' of specific degree \eqref{d:degSigma}
could also appear. Such difficulties justify why working with $C^1$ submanifolds meets a number of difficulties.

Formula \eqref{eq:area} provides an explicit relationship between the intrinsic 
measure and the spherical measure. The latter is constructed by a homogeneous distance, that may be also the sub-Riemannian distance. Then the terminology ``sub-Riemannian measure'' for the intrinsic measure is 
somehow justified.

Besides \eqref{eq:bdN}, another important case is when the spherical factor
\beq\label{eq:betadSigma}
\cF\ni V\to \beta_d(V)
\eeq
is constant on a specific family $\cF$ of $\n$-dimensional subspaces,
giving a standard form to the area formula as in \eqref{eq:cS^N_d}.
We consider precisely $\n$-vertically symmetric distances (Definition~\ref{d:hsym}),
that possess some invariance of the metric unit ball with respect to a precise family of isometries.
Such symmetries allow to have the spherical factor constant on all vertical subgroups (Theorem~\ref{t:constBeta}).
In general,  when $\beta_d$ is constant on a specific family 
$\cF$ of $\n$-dimensional homogeneous subgroups of Hausdorff dimension
$\NN$, we may redefine the spherical measure as follows
\beq\label{eq:cSNN}
\cS^\NN_d=\omega_d(\n,\NN) \cS^\NN_0.
\eeq
This definition yields the standard spherical measure of the Euclidean space
when we consider it as a commutative homogeneous group equipped with
the Euclidean distance.
From the previous results we are arrived at the following theorem.
\bt[Area formula for transversal submanifolds and symmetric distances]\label{t:areaTransversal}
Let $\Sigma\subset\G$ be an $\n$-dimensional transversal submanifold
of degree $\NN$ and let $d$ be an $\n$-vertically symmetric distance. There exists a geometric constant $\omega_d(\n,\NN)$
such that there holds
\beq\label{eq:cS^N_d}
\cS^\NN_d(B)=\int_B\|\tau^{\tilde g}_{\Sigma,\NN}\|_g\, d\sigma_{\tilde g}
\eeq
for any Borel set $B\subset\Sigma$.
\et
The class of $\n$-vertically symmetric distances is rather general,
since it includes also distances whose unit ball is not convex,
as for instance the sub-Riemannian distance in the Heisenberg group \cite{Mag31}. 
As an application, in Corollary~\ref{c:Coarea} we obtain a standard form of the coarea formula, whenever the spherical measure is construced by any $(\q-k)$-vertically symmetric distance.
To have constant spherical factor in more cases, we consider {\em multiradial distances} (Definition~\ref{d:multirad}) that are $\n$-vertically symmetric for every $\n$ (Proposition~\ref{pr:multiradial}). For these distances
the following area formula holds.

\bt[Area formula for multiradial distances]\label{t:areaIIConstBeta}
Let $\G$ be equipped with a multiradial distance $d$ and let $\Sigma\subset\G$ be a $C^1$ smooth $\n$-dimensional submanifold of degree $\NN$. Suppose that one of the following conditions hold.
\begin{enumerate}
\item
$\G$ has step 2, every point of maximum degree is algebraically regular and points of lower degree are $\cS^\NN$ negligible.
\item
$\Sigma$ is one dimensional.
\item
$\Sigma$ is a horizontal submanifold.
\end{enumerate}
Thus, in any of these cases the spherical factor $\beta_d$ is constant on all homogeneous tangent spaces of maximum degree and we set $\beta_d(\cdot)=\omega(\n,\NN)$. As a consequence, defining $\cS^\NN_d$ as in \eqref{eq:cSNN},
for any Borel set $B\subset\Sigma$ the following area formula holds 
\beq\label{eq:cS^N_dAll}
\cS^\NN_d(B)=\int_B\|\tau^{\tilde g}_{\Sigma,\NN}\|_g\, d\sigma_{\tilde g}.
\eeq
\et 
The proof of this theorem follows by joining Theorem~\ref{t:area2steps},
Theorem~\ref{t:curves} and Theorem~\ref{t:AreaHoriz}.
Finally, we provide a first general formula relating spherical measure and
Hausdorff measure for a class of submanifolds in homogeneous groups.

\bt[Hausdorff and spherical measure of horizontal submanifolds]\label{t:HausdSpheric}
Let $d$ be a multiradial distance and let $\Sigma\subset\G$ be a horizontal submanifold. Then the following equality holds
\beq\label{eq:cHcSNN}
\cH_d^\n\res\Sigma=\cS_d^\n\res\Sigma
\eeq
where $\cS^\n_d=\omega(\n,\n)\cS^\n_0$ and $\cH^\n_d=\omega(\n,\n)\cH_0^\n$.
\et
This formula also includes the Euclidean one. 
In general, the constant $\omega(\n,\n)$ is the area of the metric unit ball intersected with an $\n$-dimensional space contained in the first stratum of $\G$.

The results of this paper provide a strong evidence that a unified approach to the area formula in homogeneous groups can be achieved.
However, several questions are still to be understood. 
Whether or not an ``algebraic classification'' of submanifolds is required certainly represents a first question, which may have an independent interest. 
Other issues may arise from the absence of a general negligibility result for points of low degree. These issues, as many others, are a matter for future investigations.

%%%%%%%%%%%%%%%%%%%%%%%%%%%%%%%%%%%%%%%%%
%
%
%
%                BASIC NOTIONS
%
%
%
\section{Basic notions}\label{sect:notions}
%
%
%
%
%
%
%
%%%%%%%%%%%%%%%%%%%%%%%%%%%%%%%%%%%%%%%%%

\subsection{Graded nilpotent Lie groups and their metric structure}\label{sect:Graded}
A connected and simply connected {\em graded nilpotent Lie group} can be regarded as a graded linear space $\G=H^1\oplus\cdots\oplus H^\iota$ equipped with a polynomial group operation such that its Lie algebra $\Lie(\G)$ is {\em graded}. This grading corresponds to the following conditions
\beq\label{eq:LieG}
\Lie(\G)=\cV_1\oplus\cdots\oplus\cV_\iota, \qquad [\cV_i,\cV_j]\subset \cV_{i+j}
\eeq
for all integers $i,j\ge0$ and $\cV_{j}=\{0\}$ for all $j>\iota$, with $\cV_\iota\neq\{0\}$. 
The integer $\iota \ge 1$ is the {\em step} of the group.
The graded structure of $\G$ allows us to introduce intrinsic dilations $\delta_r:\G\to\G$ as linear mappings such that
$\delta_r(p)=r^ip$ for each $p\in H^i$, $r>0$ and $i=1,\ldots,\iota$.
The graded nilpotent Lie group $\G$ equipped with intrinsic dilations
is called {\em homogeneous group}, \cite{FS82}.
With the stronger assumption that 
\beq
[\cV_1,\cV_j]=\cV_{j+1}
\eeq
for each $j=1,\ldots,\iota$ and $[\cV_1,\cV_\iota]=\set{0}$, we say that $\G$ is a 
{\em stratified group}. Identifying further $\G$ with the tangent space $T_0\G$ at the origin $0$, we have a canonical isomorphism between $H^j$ and $\cV_j$, that associates to each $v\in H^j$ the unique left invariant vector field $X\in\cV_j$ such that $X(0)=v$.

We may also assume that $\G$ is equipped with a Lie product that induces a Lie algebra structure, where its group operation is given 
through the Baker-Campbell-Hausdorff formula:
\beq\label{eq:BCH}
xy=\sum_{j=1}^\iota c_j(x,y)=x+y+\frac{[x,y]}2+\sum_{j=3}^\iota c_j(x,y)
\eeq
with $x,y\in\G$. Here $c_j$ denote homogeneous polynomials of degree $j$ with respect to the nonassociative Lie product on $\G$.
We will refer to \eqref{eq:BCH} in short as BCH.
It is always possible to have these additional conditions, since the exponential mapping 
\[
\exp:\Lie(\G)\to\G
\]
of any simply connected nilpotent Lie group $\G$ is a bianalytic diffeomorphism.
In addition, the given Lie product and the Lie algebra associated to
the induced group operation are compatible, according to the 
following standard fact.
\bpr\label{pr:GAlgGroupOpe}
Let $G$ be a nilpotent, connected and simply connected Lie group
and consider the new group operation given by \eqref{eq:BCH}.
Then the Lie algebra associated to this Lie group structure
is isomorphic to the Lie algebra of $G$.
\epr
\begin{comment}
Dimostrazione nel file 2012-03-16_22.pdf corrispondente a due
lezioni del mio corso sui gruppi omogenei.
\end{comment}
We will denote by $\q$ the dimension of $\G$, seen as a linear space.

\begin{Def}\label{d:hsubgroup}\rm
A linear subspace $S$ of $\G$ that satisfies $\delta_r(S)\subset S$ for every $r>0$ is a {\em homogeneous subspace} of $\G$.
If in addition $S$ is a Lie subgroup of $\G$ then we say that
$S$ is a {\em homogeneous subgroup} of $\G$.
\ed
Using dilations it is not difficult to check that $S\subset\G$ is a 
homogeneous subspace if and only if we have the direct decomposition
\[
S=S_1\oplus\cdots\oplus S_\iota,
\]
where each $S_j$ is a subspace of $H^j$.
%one may check that $S$ is also a subgroup of $\G$ from the BCH 
%

A {\em homogeneous distance} $d$ on a graded nilpotent Lie group $\G$ is a left invariant distance with $d(\delta_rx,\delta_ry)=r\,d(p,q)$ for all $p,q\in\G$ and $r>0$. 
We define the open and closed balls \[ B(p,r)=\lb q\in\G: d(q,p)<r\rb\quad\mbox{and}\quad \B(p,r)=\lb q\in\G: d(q,p)\le r \rb\,. \]
The corresponding homogeneous norm is denoted by $\|x\|=d(x,0)$ for all $x\in\G$. 
When the graded nilpotent Lie group is equipped with the corresponding dilations, along with
a homogeneous norm, is called {\em homogeneous group}.

In the special case $\G$ is a stratified group, the distribution of subspaces given by
the so-called {\em horizontal fibers} 
\beq\label{d:H_pG}
H_p\G=\{X(p)\in T_p\G: X\in\cV_1\}
\eeq
with $p\in\G$ satisfies the Lie bracket generating condition. Thus, in view of Chow's theorem, a left invariant metric restricted to horizontal fibers leads us to the well known sub-Riemannian distance, that is an important example of homogeneous distance.
In this case the terminology {\em Carnot group} for $\G$ is well known.
We denote by $H\G$ the {\em horizontal subbundle} of $\G$, whose fibers are precisely the ones of \eqref{d:H_pG}.

\begin{comment} 
{\color{red} 
As mentioned in the introduction, we fix an auxiliary scalar product on $\G$ and we make this choice such that the fixed graded basis is orthonormal.
The restriction of this scalar product to $V_1$ can be translated to the so-called {\em horizontal fibers}
\[
H_x\G=\{X(x)\in T_x\G: X\in\cV_1\}
\]
as $x$ varies in $\G$, hence defining a left invariant sub-Riemannian metric $g$ on $\G$.
We denote by $H\G$ the {\em horizontal subbundle} of $\G$, whose fibers are $H_x\G$.
%
%
%
\begin{Def}[Horizontal subbundle and its sections] \rm
Let $O\subset\G$ be an open set. We denote by $HO$ the restriction of the {\em horizontal subbundle} $H\G$ to the open set $O$, whose {\em horizontal fibers} $H_x\G$ are restricted to all points $x\in O$. We denote by $\cS_c(HO)$ the linear space of all smooth sections of $HO$ with compact support in $O$.
\end{Def}
%
%
%
We also introduce the {\em horizontal gradient}
\[
\nabla_Hf=(X_1f,\ldots,X_\m f).
\]
}
\end{comment}
%
%
%
A {\em graded basis} $(e_1,\ldots,e_\q)$ of a homogeneous group $\G$ is a basis of vectors such that 
\beq
(e_{\m_{j-1}+1},e_{\m_{j-1}+2},\ldots,e_{\m_j})
\eeq
is a basis of $H^j$ for each $j=1,\ldots,\iota$, where 
\beq
\m_j=\sum_{i=1}^j\h_i \qandq \h_j=\dim H^j,
\eeq
we have set $\m_0=0$. We also set $\m=\m_1$ and observe that $\m_\iota=\q$.
A graded basis provides the associated {\em graded coordinates} $x=(x_1,\ldots,x_\q)\in\R^\q$, then defining the unique element $p=\sum_{j=1}^\q x_je_j\in\G$.
\br
It is easy to realize that one can always equip a homogeneous subgroup with graded coordinates.
\er
%
%
%\subsubsection{Underlying Riemannian metric} 
Throughout this work, a {\em graded left invariant Riemannian metric} $g$ is fixed on the homogeneous group $\G$. This metric automatically induces a scalar product on $T_0\G$, therefore our identification of $\G$ with $T_0\G$ yields a fixed Euclidean structure in $\G$. The fact that our left invariant Riemannian metric $g$ is ``graded'' means that the induced scalar product on $\G$ is {\em graded}, namely, all subspaces $H^i$ with $i=1,\ldots,\iota$ are orthogonal to each other.  
With a slight abuse of notation, the Euclidean norm on $\G$ and the norm arising from the Riemannian metric $g$ on tangent spaces will be denoted by 
the same symbol $|\cdot|$. 

For the sequel, it is also useful to recall that when a Riemannian metric $\tilde g$ is fixed on $\G$, then a scalar product on $\Lambda_k(T_p\G)$ is automatically induced for every $p\in\G$. The corresponding norm on $k$-vectors 
is denoted by $\|\cdot\|_{\tilde g}$. A {\em $\tilde g$-unit $k$-vector} $v\in \Lambda_k(T_p\G)$ satisfies $\|v\|_{\tilde g}=1$.
\br
One can easily check that when a graded scalar product is fixed, we can 
find a graded basis that is also orthonormal with respect to this scalar product.
\er
\subsection{Degrees, multivectors and projections}
In this section we present a suitable notion of degree and of
projection on $k$-vectors. Let us consider a graded basis
$(e_1,\ldots,e_\q)$ of $\G$ and the corresponding left invariant vector fields 
$X_j\in\Lie(\G)$ such that $X_j(0)=e_j$ for each $j=1,\ldots,\q$.
We have obtained a basis $(X_1,\ldots,X_\q)$ of the Lie algebra $\Lie(\G)$.
If the graded basis is orthonormal with respect to $g$, then our frame automatically becomes orthonormal. In the sequel, we will consider graded orthonormal frames.

If $(x_1,\ldots,x_\q)$ are graded coordinates of graded basis
$(e_1,\ldots,e_\q)$, we assign {\em degree $j$} to each
coordinate $x_i$ such that $e_i\in H^j$.
We analogously assign degree $j$ to each left invariant vector field of $\cV_j$.
In different terms, for each $i\in\set{1,\ldots,\q}$ we consider the unique integer $d_i$ on $\set{1,\dots,\iota}$ such that 
\[
\m_{d_i-1}< i\leq \m_{d_i}.
\]
It is easy to observe that $d_i$ is the degree of both the coordinate $x_i$ and 
the left invariant vector field $X_i$.

We denote by $\cI_{k,\q}$ the family of all multi-index $I=(i_1,\ldots,i_k)\in \set{1,\ldots,\q}^k$ such that $1\le i_1<\cdots<i_k\le\q$.
For each $I\in\cI_{k,\q}$, we define the $k$-vector 
\beq\label{eq:X_I}
X_I=X_{i_1}\wedge\cdots \wedge X_{i_k}\in\Lambda_k\!\pa{\Lie(\G)},
\eeq
whose degree is defined as follows 
\[
d(X_I)=d_{i_1}+\cdots+d_{i_k}.
\]

\br
The set $\set{X_I: I\in\cI_{k,\q}}$ constitutes a basis of $\Lambda_k(\Lie(\G))$.
We also notice that the degree of $X_1\wedge\cdots\wedge X_\q$ is precisely
\[
\QQ=d_1+\cdots+d_\q,
\]
where this number coincides with the Hausdorff dimension of $\G$ with respect to an arbitrary homogeneous distance.
\er
The space $\Lambda_k(\Lie(\G))$ can be identified with the space of {\em left invariant $k$-vector fields}. The sections $\xi$ of the vector bundle $\Lambda_k\G=\bcup_{p\in\G} \Lambda_k(T_p\G)$ are precisely the $k$-vector fields of $\G$. The left invariance of $\xi$ is expressed by the equality
\[
(\Lambda_kl_p)_*(\xi)=\xi
\]
for every $p\in\G$, where $z\to l_pz=pz$ denotes the {\em left translation}
by $p$. On a simple $k$-vector field $Z_1\wedge\cdots\wedge Z_k$ made by the vector fields $Z_1,\ldots,Z_k$ of $G$, we have defined
\[
(\Lambda_kl_p)_*(Z_1\wedge\cdots\wedge Z_k)=(l_p)_*Z_1\wedge\cdots\wedge(l_p)_*Z_k,
\]
where $(l_p)_*Z_j$ is the push-forward of $Z_j$ by $l_p$.
In the sequel, we will automatically identify the space of $k$-vectors $\Lambda_k(\Lie G)$ 
with the space of left invariant $k$-vector fields.
Indeed, whenever $\xi$ is a left invariant $k$-vector field, the mapping that associates
$\xi$ to $\xi(0)\in\Lambda_k(T_0\G)$ is an isomorphism and 
$\Lambda_k(T_0\G)$ is isomorphic to $\Lambda_k(\Lie\G)$.

\begin{Def}[Projections on $k$-vectors]\rm
Let $(X_1,\ldots, X_\q)$ be a graded orthonormal frame, let $1\le k \le \q$ and $1\le M\le \QQ$ be integers. For each left invariant $k$-vector field $\xi\in\Lambda_k(\Lie(\G))$,
written as $\xi=\sum_{I\in \cI_{k,\q}} c_I\, X_I$ for a suitable set of real numbers $\set{c_I}$,
we define the {\em $M$-projection of $\xi$} as follows
\[
\pi_M(\xi)=\sum_{\substack{I\in \cI_{k,\q} \\ d(X_I)=M}} c_I\, X_I\in\Lambda_k(\Lie(\G)).
\]
This defines a mapping $\pi_M:\Lambda_k(\Lie(\G))\to \Lambda_k^M(\Lie(\G))$, where we have set
\[
\Lambda_k^M(\Lie(\G))=\set{\sum_{I\in \cI_{k,\q}} c_I\, X_I: d(X_I)=M,\ c_I\in\R }.
\]
For each $p\in\G$, we can also introduce the fibers
\[
\Lambda_k^M\ls T_p\G\rs=\set{\xi(p)\in\Lambda_k(T_p\G): \xi\in\Lambda_k^M(\Lie(\G))},
\]
along with the following pointwise $M$-projection
\beq\label{d:projM}
\pi_{p,\MM}(z)=\pi_\MM(\xi)(p)\in\Lambda_k(T_p\G),
\eeq
where $z\in \Lambda_k(T_p\G)$ and there exists a unique 
$\xi\in  \Lambda_k(\Lie(\G))$ such that $\xi(p)=z$.
We clearly have $\pi_{p,\MM}:\Lambda_k(T_p\G)\to \Lambda_k^M\ls T_p\G\rs$.
By our identification of $\G$ with $T_0\G$, we introduce
the translated projection of $k$-vectors at a point $p$ to the origin:
\beq\label{eq:Pi_pM0}
\pi_{p,\MM}^0:\Lambda_k(T_p\G)\to \Lambda_k\G.
\eeq
For each $z\in \Lambda_k(T_p\G)$, we consider the unique element $\xi\in\Lambda_k(\Lie(\G))$ such that 
\[
\xi(p)=z\qandq \pi^0_{p,\MM}(z)=\pi_\MM(\xi)(0)\in\Lambda_k(T_0\G)\simeq \Lambda_k(\G).
\]
\ed
%
%
%
%%%%%%%%%%%%%%%%%%%%%%%%%%%%%%%%%%%%%%%%%%%%%
%
%
%
%									HOMOGENEOUS TANGENT SPACE
%
%
%
\subsection{The homogeneous tangent space}\label{sect:AlgTanSpace}
%
%
%
%
%
%
%
%%%%%%%%%%%%%%%%%%%%%%%%%%%%%%%%%%%%%%%%%%%%%
%
%
%
In this section and in the sequel $\Sigma$ denotes an $\n$-dimensional $C^1$ smooth submanifold embedded in a homogeneous group $\G$.
A {\em tangent $\n$-vector} of $\Sigma$ at $p\in\Sigma$ is 
\[
\tau_\Sigma(p)=t_1\wedge\cdots \wedge t_\n\in\Lambda_\n(T_p\Sigma),
\]
where $(t_1,\ldots,t_\n)$ is a basis of $T_p\Sigma$. 
This vector is not uniquely defined, but any other choice of the basis
of $T_p\Sigma$ yields a proportional $\n$-vector.
%We say that $\xi_{p,\Sigma}$ is a {\em tangent $\n$-vector field of $\Sigma$} if for each $p\in\Sigma$  the vector $\tau_\Sigma(p)$ is a tangent $\n$-vector of $\Sigma$ at $p$. Notice that only in the case of oriented submanifolds we can find a continuous tangent $\n$-vector field. However, for our purposes we do not need to assume orientability.
%
%
% 
%If $1\le \MM\le\q$ is a positive integer and $\tau_\Sigma$ is a tangent $\n$-vector field of $\Sigma$, then we define $\tau_{\Sigma,\MM}$ to be an {\em $\MM$-tangent $\n$-vector field of $\Sigma$} if for each $p\in\Sigma$ we have
%\[ \tau_{\Sigma,\MM}(p)=\pi_{p,\MM}\lls \tau_\Sigma(p) \rrs \]
%We have the decomposition  \[   \tau_\Sigma(p)=\sum_{I\in \cI_\n} c_I\, X_I(p)\,. \]
We define the {\em pointwise degree} $d_\Sigma(p)$ of $\Sigma$ at $p$ as the integer
\beq\label{d:degree_point}
d_\Sigma(p)=\max\set{M\in\N: \pi_{p,M}\pa{\tau_\Sigma(p)}\neq 0}
\eeq
and the {\em degree} of $\Sigma$ is the positive integer
\beq\label{d:degSigma}
d(\Sigma)=\max\{d_\Sigma(p):p\in\Sigma\}\in\N\sm\set{0}.
\eeq 
We say that $p\in\Sigma$ has {\em maximum degree} if $d_\Sigma(p)=d(\Sigma)$.
\begin{Def}[Homogeneous tangent space]\label{def:homtan}\rm
Let $p\in \Sigma$ and set $d_\Sigma(p)=\NN$.
If $\tau_\Sigma(p)$ is a tangent $\n$-vector to $\Sigma$ at $p$
and $\xi_{p,\Sigma}\in\Lambda_\n(\Lie(\G))$ is the unique left invariant $\n$-vector 
field such that $\xi_{p,\Sigma}(p)=\tau_\Sigma(p)$, then
we define the {\em Lie homogeneous tangent space} of $\Sigma$ at $p$,
in short the {\em Lie h-tangent space} as follows
\[
\cA_p\Sigma=\set{X\in\Lie(\G): X\wedge \pi_\NN(\xi_{p,\Sigma})=0}.
\]
We say that $p\in\Sigma$ is {\em algebraically regular} if 
$\cA_p\Sigma$ is a subalgebra of $\Lie(\G)$. 
In this case we call the corresponding subgroup
\[
A_p\Sigma=\exp\cA_p\Sigma
\]
the {\em homogeneous tangent space} of $\Sigma$ at $p$, or simply
the {\em h-tangent space} of $\Sigma$ at $p$.
\end{Def}
\begin{Rem}\rm
It is very important that the h-tangent space can be defined at any point of a smooth submanifold of a graded group. In many cases, it precisely coincides with the blow-up of the 
submanifold, when it is performed by intrinsic dilations and the group operation.
\end{Rem}
Any point of a $C^1$ smooth curve of $\G$ is algebraically regular, 
since any one dimensional linear subspace of a layer $H^j$ 
is automatically a homogeneous subalgebra.
Points that are not algebraically regular may appear in submanifolds
of dimension higher than one, according to the next example.
\begin{Exa}\label{ex:charact_Heis}\rm
Let the first Heisenberg group $\H$ be identifed with $\R^3$ through the coordinates
$(x_1,x_2,x_3)$ such that the group operation reads as follows
\[
(x_1,x_2,x_3)(x_1',x_2',x_3')=(x_1+x_1',x_2+x_2',x_3+x_3'+x_1x_2'-x_2x_1').
\]
Let $\Sigma=\set{(x_1,x_2,x_3)\in\H: x_3=x_1^2+x_2^2}$ be a 2-dimensional submanifold.
Let us show that the origin $p=(0,0,0)\in\Sigma$ is not algebraically regular. 
It is easy to observe that $d_\Sigma(p)=2$. 
We have $T_p\Sigma=\spn\set{e_1,e_2}$, where $(e_1,e_2,e_3)$ is the canonical basis of $\R^3$,
therefore $\tau_\Sigma(p)=e_1\wedge e_2$.
Introducing the left invariant vector fields 
\[
X_1(x)=e_1-x_2e_3 \qandq X_2=e_2+x_1e_3,
\]
have may define $\xi=X_1\wedge X_3$, of degree two, such that $\xi(0)=e_1\wedge e_2$.
This implies that $\pi_{p,2}(e_1\wedge e_2)\neq0$ and $\pi_{p,j}(e_1\wedge e_2)=0$ for all $j\ge 3$. The Lie h-tangent space is
defined as follows
\[
\cA_p\Sigma=\set{X\in\Lie(\G): X\wedge X_1\wedge X_2=0}=\spn\set{X_1,X_2},
\]
that is not a Lie subalgebra of $\Lie(\H)$.
The homogeneous tangent space 
\[
A_p\Sigma=\exp\cA_p\Sigma=\set{(x,y,0)\in\H: x,y\in\R}
\]
is a subspace of $\H$, but it is not a subgroup.
\end{Exa}
\begin{Rem}[Characteristic points]\rm
We observe that in the previous example the origin $p$ is also a 
{\em characteristic point} of $\Sigma$. The general definition states that
a characteristic point $q$ of a $C^1$ smooth hypersurface $\Sigma\subset\G$ 
satisfies $H_q\G\subset T_q\Sigma$.
This kind of point behaves as a singular point with respect 
to the metric strucure of $\G$.

The notion of algebraic regularity fits with this picture
in that characteristic points are not algebraically regular,
as it can be seen arguing as in Example~\ref{ex:charact_Heis} and taking into account the invariance of the pointwise degree under left translations, as shown in
Proposition~\ref{pr:translation}. 
On the other hand, for all $C^1$ smooth hypersurfaces, 
characteristic points are negligible with respect to the $(Q-1)$-dimensional Hausdorff measure \cite{Mag5}. 
\er
\bex\label{ex:Legendrian_Heis^2}\rm
Let $p$ be a point of a 2-dimensional Legendrian submanifold $\Sigma$, see Section~\ref{section:Legendrian}, that is embedded in the second Heisenberg
group $\H^2$.  Then $d_{\Sigma}(p)=2$ and $p$ is an algebraically regular point
whose homogeneous tangent space is a commutative horizontal subgroup of $\H^2$.
Here we consider $\H^2$ as $\R^5$ equipped with the horizontal left invariant
vector fields
\[
X_1(x)=e_1-x_3e_5,\quad X_2=e_2-x_4e_5,\quad X_3(x)=e_3+x_1e_5,\quad X_4=e_4+x_2e_5,
\]
spanning the first layer of the stratified Lie algebra $\Lie(\H^2)$.
\eex

\begin{Rem}\label{r:comparisondegreehtan}\rm
Examples~\ref{ex:charact_Heis} and \ref{ex:Legendrian_Heis^2}
show that one can find different submanifolds of the same dimension 
with points of the same degree, where only one of these points
is algebraically regular. This shows somehow that algebraic regularity
encodes the ``behavior'' of the submanifold around the point.
The pointwise degree clearly provides less information.
\er 
%
%
%
%%%%%%%%%%%%%%%%%%%%%%%%%%%%%%%%%%%%%%%%%%%%%
%
%
%
%									HOMOGENEOUS TANGENT SPACE
%
%
%
\section{Special coordinates around points of submanifolds}\label{sect:SpecialCoordinates}
%
%
%
%
%
%
%
%%%%%%%%%%%%%%%%%%%%%%%%%%%%%%%%%%%%%%%%%%%%%
%
%
%
Throughout this section, the symbol $\Sigma\subset\G$ will denote a $C^1$ smooth submanifold embedded in a homogeneous group $\G$, if not otherwise stated.
To perform the blow-up of $\Sigma$ at a fixed point, finding special coordinates is 
of capital importance. They are also useful to determine degree and homogeneous tangent space of a fixed point. 

From the proof of \cite[Lemma~3.1]{Mag13Vit}, it is not difficult to see that special coordinates can be found around any point
of a submanifold, that need not have maximum degree. 
This is the content of the following theorem.
\begin{The}\label{t:specialcoord}
Let $\Sigma\subset\G$ be a $C^1$ smooth submanifold of topological dimension $\n$ and let $0\in\Sigma$. There exist $\alpha_1,\ldots,\alpha_\iota\in\N$ with $\alpha_j\le\h_j$ for all $j=1,\ldots,\iota$, an orthonormal graded basis $(e_1,\ldots,e_\q)$ with respect to the fixed graded scalar product
on $\G$, a bounded open
neighborhood $U\subset\R^\n$ of the origin
and a $C^1$ smooth embedding $\Psi:U\to\Sigma$ with the following 
properties. There holds $\Psi(0)=0\in\G$, for all $y\in U$
\[
\Psi(y)=\sum_{j=1}^\q \psi_j(y) e_j, \quad \psi(y)=(\psi_1(y),\ldots,\psi_\q(y))
\] 
and the Jacobian matrix of $\psi$ at the origin is
\begin{equation}\label{e:matriceC}
D\psi (0)=\left(\begin{array}{c|c|c|c|c|c}
I_{\alpha_1} & 0 & \cdots & \cdots &\cdots & 0\\
0 & \ast &   \cdots &\cdots & \cdots &\ast\\
\hline 0 & I_{\alpha_2} & 0  & \cdots &\cdots & 0 \\
0 & 0 & \ast & \cdots & \cdots &\ast\\
\hline 0 & 0 & I_{\alpha_3}  & 0 & \cdots &0 \\
0 & 0 & 0 & \ast & \cdots &\ast\\
\hline \vdots & \vdots & \vdots   & \ddots & \ddots & \vdots \\
\hline 0 & 0 & \cdots &\cdots & \cdots &I_{\alpha_\iota}\\
0 & 0 & \cdots & \cdots & \cdots &0
\end{array}\right).
\end{equation}
The blocks containing the identity matrix $I_{\alpha_j}$ have $\h_j$ rows,  for every $j=1,\ldots,\iota$. The blocks $\ast$ are $(\h_j-\alpha_j)\times\alpha_i$ matrices,
for all $j=1,\ldots,\iota-1$ and $i=j+1,\ldots,\iota$. The mapping $\psi$ can be assumed to 
have the special graph form given by the conditions
\beq\label{eq:psicoord}
\psi_s(y)=y_{s-\m_{j-1}+\mu_{j-1}}	 
\eeq
for every $s=\m_{j-1}+1,\ldots,\m_{j-1}+\alpha_j$ and $j=1,\ldots,\iota$, where we have defined
\beq\label{eq:muj}
\mu_0=0\qandq \mu_j=\sum_{i=1}^j\alpha_i\quad \mbox{for}\  j=1,\ldots,\iota.
\eeq
\end{The}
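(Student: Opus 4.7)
The strategy is to adapt the orthonormal graded basis of $\G$ to the flag induced on $T_0\Sigma$ by the grading $\G=H^1\oplus\cdots\oplus H^\iota$, and then invoke the implicit function theorem.

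First I would introduce the descending filtration associated with the stratification. For each $j=1,\ldots,\iota$, let $\pi^{\ge j}\colon\G\to H^j\oplus\cdots\oplus H^\iota$ denote the projection along $H^1\oplus\cdots\oplus H^{j-1}$, and set $V^j=\pi^{\ge j}(T_0\Sigma)$. Then $V^1=T_0\Sigma$ and $V^{\iota+1}=\{0\}$, and the short exact sequences
\[
0\to V^{j+1}\to V^j\to V^j/V^{j+1}\to 0
\]
give a canonical identification of $V^j/V^{j+1}$ with a subspace of $H^j$. Define $\alpha_j=\dim V^j-\dim V^{j+1}=\dim(V^j/V^{j+1})$. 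Then $\alpha_j\le\h_j$ automatically and $\alpha_1+\cdots+\alpha_\iota=\n$, so the integers $\mu_j$ in \eqref{eq:muj} satisfy $\mu_\iota=\n$.

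Next I would construct the graded orthonormal basis adapted to this flag. Working stratum by stratum, inside $H^j$ I choose an orthonormal basis $(e_{\m_{j-1}+1},\ldots,e_{\m_j})$ whose first $\alpha_j$ vectors span (a lift of) $V^j/V^{j+1}\subset H^j$, and whose remaining $\h_j-\alpha_j$ vectors span the orthogonal complement in $H^j$. The concatenation is an orthonormal graded basis of $\G$. By the definition of $\alpha_j$, the projection of $T_0\Sigma$ onto the coordinate subspace
\[
L=\spn\!\Big\{e_{\m_{j-1}+1},\ldots,e_{\m_{j-1}+\alpha_j}: j=1,\ldots,\iota\Big\}\subset\G
\]
is a linear isomorphism: this follows by a downward induction on $j$, using that $V^j$ projects isomorphically onto $V^j/V^{j+1}\oplus V^{j+1}$ and that, at each stratum, the first $\alpha_j$ coordinates of $H^j$ were picked precisely to capture $V^j/V^{j+1}$.

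Now I would apply the implicit function theorem. Let $\Phi\colon \tilde U\to\Sigma$ be any initial $C^1$ chart with $\Phi(0)=0$; by the isomorphism above, the composition $\pi_L\circ\Phi$ has invertible differential at the origin, where $\pi_L$ is projection onto the $\n$ coordinates indexing $L$. Inverting and reparametrizing gives a $C^1$ embedding $\Psi\colon U\to\Sigma$ whose coordinates satisfy exactly the graph condition \eqref{eq:psicoord}, i.e.\ the $\mu_j$-indexed coordinates are the parameters themselves. This accounts for the identity blocks $I_{\alpha_j}$ in \eqref{e:matriceC}.

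Finally I would verify the vanishing pattern of $D\psi(0)$. For a ``free'' row, say the $s$-th with $\m_{j-1}+\alpha_j<s\le\m_j$, the tangent vector $\partial_{y_k}\Psi(0)$ must lie in $T_0\Sigma$; combined with \eqref{eq:psicoord}, the component of $\partial_{y_k}\Psi(0)$ along $e_s$ corresponds to an element of $V^j$ whose class in $V^j/V^{j+1}$ is determined by the ``new'' coordinates of level exactly $j$. For $k\le\mu_{j-1}$, that class is zero by construction of the basis, forcing $\partial_{y_k}\psi_s(0)=0$; for $k>\mu_{j-1}$ only ``starred'' entries remain. This yields precisely the block pattern in \eqref{e:matriceC}.

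The main bookkeeping obstacle is matching the abstract filtration data $(V^j,\alpha_j)$ with the explicit $\q\times\n$ layout of \eqref{e:matriceC}: one has to keep track of the stratum index $j$, the row range $\m_{j-1}+1,\ldots,\m_j$ within the stratum, and the column range $\mu_{j-1}+1,\ldots,\mu_j$ in the parameter space simultaneously, and check that the adapted graded basis does the job in one shot rather than requiring a further linear change of coordinates that would spoil the graded orthonormality.
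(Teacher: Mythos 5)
Your overall strategy is sound and is essentially the argument the paper has in mind: the paper itself only derives the statement from the proof of Lemma~3.1 of \cite{Mag13Vit} together with a reparametrization through the projection onto the selected coordinates, so your self-contained reconstruction via the filtration is welcome. One small imprecision first: the sequence $0\to V^{j+1}\to V^j\to V^j/V^{j+1}\to 0$ has the roles reversed, since $V^{j+1}=\pi^{\ge j+1}(T_0\Sigma)$ is the \emph{quotient} of $V^j$ under $\pi^{\ge j+1}$, not a subspace of it. The canonical $\alpha_j$-dimensional subspace of $H^j$ you should select is the kernel $V^j\cap H^j$ of the surjection $V^j\to V^{j+1}$; with that choice the adapted basis, the isomorphism $\pi_L|_{T_0\Sigma}$ and the graph reparametrization all go through.

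The genuine gap is in the verification of the vanishing pattern, which as stated is off by one block column and does not reproduce \eqref{e:matriceC}. For the free rows $\m_{j-1}+\alpha_j<s\le\m_j$ of stratum $j$, the theorem asserts $\partial_{y_k}\psi_s(0)=0$ for all $k\le\mu_j$ — in particular there is a zero block sitting \emph{directly below} $I_{\alpha_j}$, i.e.\ in the columns $\mu_{j-1}<k\le\mu_j$ of level $j$ itself — and starred entries only for $k>\mu_j$. You argue vanishing only for $k\le\mu_{j-1}$ and declare everything with $k>\mu_{j-1}$ starred; for $j=1$ this would leave no zero block at all in the free rows, contradicting the displayed matrix. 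The missing step is to show that for a column $k$ of level $i$ one has $\pi^{\ge i}\bigl(\partial_{y_k}\Psi(0)\bigr)=e_s$, where $s=\m_{i-1}+k-\mu_{i-1}$ is the corresponding selected index: since $e_s\in V^i\cap H^i\subset V^i$, the difference $\pi^{\ge i}(\partial_{y_k}\Psi(0))-e_s$ lies in $V^i$ and, by the graph form \eqref{eq:psicoord}, has no components along the selected directions of any stratum $\ge i$; a downward induction on the stratum (using that $V^\iota=V^\iota\cap H^\iota$ and that $\pi^{\ge i+1}$ maps $V^i$ onto $V^{i+1}$ with kernel $V^i\cap H^i$) shows that the only such element of $V^i$ is zero. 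This gives the full pattern of \eqref{e:matriceC}, including the zero blocks at level $j$ in the free rows; those exact zeros are not cosmetic, as they become the $o(1)$ blocks below the identities in Corollary~\ref{c:matriceCcont} and are used in the blow-up argument.
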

%
%
%
%
\begin{comment}
\begin{proof}
Since in \cite[Corollary 3.8]{Mag13Vit} there are not all details here we sketch a more complete proof. By Lemma~3.1 of \cite{Mag13Vit} there exists a $C^1$ smooth embedding $\Phi:V\to\G$ on an open neighborhood 
of the origin $V\subset\R^\n$ such that $\Phi(V)\subset\Sigma$, $\Phi(0)=0$, 
\[
\Phi(x)=\sum_{j=1}^\q\phi_j(x)e_j
\]
and the Jacobian matrix of $\phi(x)=(\phi_1(x),\ldots,\phi_\q(x))$ at the origin has the form \eqref{e:matriceC}
We define the projection $\pi_0:\R^\q\to\R^\n$ as follows 
\[
\pi_0(x_1,\ldots,x_n)=(x_1,\ldots,x_{\alpha_1},x_{\m_1+1},\ldots, x_{\m_1+\alpha_2},\ldots,x_\n)\,.
\]
Thus, $d(\pi_0\circ\phi)(0)$ is the identity matrix, hence the composition $\phi\circ (\pi_0\circ\phi)^{-1}$
has the required properties ....
\end{proof}
\end{comment}
%
%
\br\label{r:localdegreealpha_i}
The numbers $\alpha_j$ provided by Theorem~\ref{t:specialcoord} are 
uniquely defined and do not depend on the choice of the special coordinates $\psi_j$.
One may also observe that
\beq\label{eq:n_d_Sigma}
\mu_\iota=\n \qandq d_\Sigma(0)=\sum_{i=1}^\iota i\, \alpha_i,
\eeq
where $\n$ is the topological dimension of $\Sigma$ and $d_\Sigma(0)$ is the 
degree of $\Sigma$ at the origin.
\er
\bpr\label{pr:homtansp}
Under the assumptions of Theorem~\ref{t:specialcoord},  the homogeneous tangent space of $\Sigma$ at the origin can be represented as follows
\beq\label{eq:homtansp}
A_0\Sigma=\spn\set{e_1,\ldots,e_{\alpha_1},e_{\m_1+1},\ldots,e_{\m_1+\alpha_2},\ldots,e_{\m_{\iota-1}+1},\ldots,e_{\m_{\iota-1}+\alpha_\iota}}.
\eeq
\epr
\begin{proof}
From the form of the Jacobian matrix \eqref{e:matriceC} and the definition of 
homogeneous tangent space, there holds
\[
\pi_{0,\NN}^0\pa{\der_1\psi(0)\wedge\der_2\psi(0)\wedge\cdots\wedge \der_\n\psi(0)}=
e_1\wedge\cdots\wedge e_{\alpha_1} \cdots \wedge
e_{\m_{\iota-1}+1}\wedge\cdots\wedge e_{\m_{\iota-1}+\alpha_1}.
\]
The unique left invariant $\n$-vector field $\xi\in\Lambda_\n(\Lie(\G))$ such that
\[
\xi(0)=\der_1\psi(0)\wedge\der_2\psi(0)\wedge\cdots\wedge \der_\n\psi(0)
\]
then satisfies
\[
\pi_\NN(\xi)=X_1\wedge\cdots\wedge X_{\alpha_1}\wedge \cdots \wedge X_{\m_{\iota-1}+1}\wedge\cdots\wedge X_{\m_{\iota-1}+\alpha_1}.
\]
As a result, in view of Definition~\ref{def:homtan} our claim is established.
\end{proof}
The special coordinates of Theorem~\ref{t:specialcoord} allow us 
to introduce an ``induced degree'' on $\Sigma$, as in the next definition.
\begin{Def}\rm
In the notation of Theorem~\ref{t:specialcoord}, we define
\beq\label{e:degSigma}
b_i=j\qs{if and only if} \mu_{j-1}<i\le\mu_j
\eeq
for every $i=1,\ldots,\n$. The integer $b_i$ is the {\em induced degree of $y_i$},
with respect to the coordinates $y=(y_1,\ldots,y_\n)$ of $\Sigma$ around
the origin, in Theorem~\ref{t:specialcoord}. We define accordingly the 
{\em induced dilations} $\sigma_r:\R^\n\to\R^\n$ as follows
\begin{equation}\label{d:dilintrinsic}
\sigma_r(t_1,\ldots,t_\n)=(r^{b_1} t_1,\ldots,r^{b_\n} t_\n) \quad \text{where $r>0$}.
\end{equation}
\ed
The coordinates $y$ of Theorem~\ref{t:specialcoord} allow us to act on
the homogeneous tangent space $A_p\Sigma$ of $\Sigma$ through the induced dilations $\sigma_r$. This is an important fact, that will be used in the sequel.
\bc\label{c:matriceCcont}
Under the assumptions of Theorem~\ref{t:specialcoord},
we consider the frame of left invariant vector fields 
\[
X_1,\ldots,X_\q
\]
adapted to the coordinates of the theorem, namely we impose
the condition $X_j(0)=e_j$ for each $j=1,\ldots,\q$.
Then there exist unique continuous coefficients $C_i^s$ such that
\begin{equation}\label{e:derjXj}
\der_i\psi=\sum_{s=1}^\q C^s_i(\psi)\, X_s(\psi)\quad \mbox{for all $i=1,\ldots,\n$}.
\end{equation}
If $0\in\Sigma$ has maximum degree, then the $\q\times\n$ matrix-valued function $C$ of coefficients $C_i^s$ satisfies the following formula 
\begin{equation}\label{eq:matriceCcont}
C=\left(\begin{array}{c|c|c|c|c|c}
I_{\alpha_1}+o(1) & o(1) & \cdots & \cdots &\cdots & o(1)\\
o(1) & \ast &   \cdots &\cdots & \cdots &\ast\\
\hline o(1) & I_{\alpha_2}+o(1) & o(1)  & \cdots &\cdots & o(1) \\
0 & o(1) & \ast & \cdots & \cdots &\ast\\
\hline o(1) & o(1) & I_{\alpha_3}+o(1)  & o(1) & \cdots &o(1) \\
0 & 0 & o(1) & \ast & \cdots &\ast\\
\hline \vdots & \vdots & \vdots   & \ddots & \ddots & \vdots \\
\hline o(1) & o(1) & \cdots &\cdots & \cdots &I_{\alpha_\iota}+o(1)\\
0 & 0 & \cdots & \cdots & \cdots &o(1)
\end{array}\right).
\end{equation}
The symbols $o(1)$ denote a continuous submatrix that vanishes at $0$.
The constantly null submatrices in \eqref{eq:matriceCcont} are denoted by $0$. In the case $0\in\Sigma$ is not of maximum degree these
submatrices are replaced by other matrices $o(1)$ vanishing at $0$.
\ec
\begin{proof}
The form \eqref{eq:matriceCcont} of $C$ follows from \eqref{e:matriceC} 
joined with the assumption that $0\in\Sigma$ has maximum degree.
The assumption on the maximum degree of the origin is needed only to obtain the constantly vanishing submatrices of \eqref{eq:matriceCcont}.
\end{proof}
Theorem~\ref{t:specialcoord} and Corollary~\ref{c:matriceCcont} 
provides special coordinates around any point of a smooth submanifold.
The next proposition shows that translations preserve
the ``algebraic structure of points''.
\bpr\label{pr:translation} 
If $\Sigma$ is a $C^1$ smooth submanifold, $p\in\Sigma$ and 
we define the translated submanifold $\Sigma_p=p^{-1}\Sigma$, then 
\[
d_\Sigma(p)=d_{\Sigma_p}(0), \quad \cA_p\Sigma=\cA_0\Sigma_p
\qandq A_p\Sigma=A_0\Sigma_p.
\]
\epr
\begin{proof}
We consider the tangent $\n$-vector
\[
\tau_\Sigma(p)=\sum_{I\in\cI_{k,\q}} c_I  X_I(p),
\]
where $X_I$ are defined in \eqref{eq:X_I} and $c_I\in\R$
and the translated one
\[
\tau_{\Sigma_p}(0)=dl_{p^{-1}}\pa{\sum_{I\in\cI_{k,\q}} c_I  X_I(p)}=
\sum_{I\in\cI_{k,\q}} c_I  X_I(0).
\]
We have used the left invariance of the basis $(X_1,\ldots,X_\q)$,
that defines the $k$-vectors $X_I$.
This invariance of the coefficients $c_I$ joint with
the definition of degree and of homogeneous tangent
space immediately lead us to our claim.
\end{proof}

As we have previously seen, the continuous matrix \eqref{eq:matriceCcont} is
related to the algebraic structure of the homogeneous tangent space
$A_0\Sigma$ and it plays an important role in the proof of the blow-up
of Theorem~\ref{t:LocExpSurf}. This result considers four distinct cases that correspond to
different ``shapes'' of the submanifold around the blow-up point. 
It is then imporant to make 
the form of the continuous matrix \eqref{eq:matriceCcont} explicit in each of the four cases.

If $\G$ is of step two, the continuous matrix $C$ of \eqref{eq:matriceCcont}
takes the form
\begin{equation}\label{eq:matriceCcontiota2}
C=\left(\begin{array}{c|c}
 I_{\alpha_1}+o(1) & o(1)   \\
o(1) & \ast \\
\hline
 o(1) & I_{\alpha_2}+o(1)   \\
 0 & o(1)  \\
\end{array}\right).
\end{equation}
In the case $\Sigma$ is curve embedded in $\G$, namely $\n=1$, $\alpha_\NN=1$, we have 
\begin{equation}\label{eq:matriceCcontn1}
C=\left(\begin{array}{c}
\vdots \\
\hline
\ast   \\
 \hline
I_{\alpha_\NN}+o(1)  \\
o(1)  \\
\hline
0   \\
\hline
\vdots \\
\hline
0
\end{array}\right),
\end{equation}
where $I_{\alpha_\NN}$ in this case denotes the $1\times 1$ matrix equal to one.
The remaining two cases, related to the special structure of the homogeneous tangent space, need to be treated in more detail.
They are indeed related to specific classes of submanifolds.

\section{Horizontal points and horizontal submanifolds}\label{section:Legendrian}

Horizontal points are a specific class of algebraically regular points, 
associated to a class of subgroups.
The interesting fact is that they have a corresponding class of submanifolds, where all points are horizontal.
\begin{Def}[Horizontal subgroup]\label{d:horizontalsubgroup}\rm
We say that $H\subset\G$ is a {\em horizontal subgroup} 
if it is a homogeneous subgroup contained in the first layer $H^1$ of $\G$.
\ed
Clearly horizontal subgroups are automatically commutative.
\begin{Def}[Horizontal points and horizontal submanifolds]\rm
A {\em horizontal point} $p$ of a $C^1$ smooth submanifold $\Sigma$ embedded in a homogeneous group $\G$ is an algebraically regular whose homogeneous tangent space is a horizontal subgroup. The submanifold $\Sigma$ is {\em horizontal} if all of its points are horizontal.
\ed
Horizontal points determine a special form of the matrix $C$ in Corollary~\ref{c:matriceCcont}, as shown in the next proposition.
\bpr\label{pr:horizC}
In the assumptions of Corollary~\ref{c:matriceCcont}, if the origin $0\in\Sigma$
is a horizontal point, then $\alpha_1=\n$, $\alpha_j=0$ for each $j=2,\ldots,\iota$
and the continuous matrix \eqref{eq:matriceCcont} takes the following form
\begin{equation}\label{eq:matriceCcontHor}
C=\left(\begin{array}{c}
I_{\alpha_1}+o(1) \\
o(1) \\
\hline 
0 \\
\hline 
\vdots \\
\hline
0
\end{array}\right).
\end{equation}
\epr
\begin{proof}
From Proposition~\ref{pr:homtansp}, we have $A_0\Sigma=\spn\set{e_1,\ldots,e_{\alpha_1}}$.
This immediately shows the form of $C$ given in \eqref{eq:matriceCcontHor}.
\end{proof}
\br\label{r:horiz_degree}
Joining the previous proposition with Remark~\ref{r:localdegreealpha_i}
and taking into account the left invariance pointed out in Proposition~\ref{pr:translation}, 
one immediately observes that all points of an $\n$-dimensional horizontal submanifold $\Sigma$ have degree $\n$.
Therefore the degree of $\Sigma$ coincides
with its topological dimension.
\er
\br\label{r:horiz_points}
Proposition~\ref{pr:horizC} shows in particular that a horizontal point
$p$ of a $C^1$ smooth submanifold $\Sigma$ must satisfy the condition
\beq\label{eq:T_pinclusionH_p}
T_p\Sigma\subset H_p\G.
\eeq
Then any $C^1$ smooth horizontal submanifold is
tangent to the horizontal subbundle $H\G$. In different terms,
$\Sigma$ is an integral submanifold of the distribution made by the fibers $H_p\G$.
\er
The inclusion \eqref{eq:T_pinclusionH_p} alone does not imply that $p$ is horizontal, see Example~\ref{ex:charact_Heis}.
\bpr\label{pr:horizC^2smooth}
If $\Sigma$ is a $C^2$ smooth submanifold such that $T_p\Sigma\subset H_p\G$ 
for every $p\in\Sigma$, then $\Sigma$ is a horizontal submanifold.
\epr
\begin{proof}
Fix $p\in\Sigma$ and consider two arbitrary $C^1$ smooth 
sections $X$ and $Y$ of the tangent bundle $T\Sigma$,
which are defined on a neighborhood $U$ of $p$.
There exist $a_j,b_l$ $C^1$ smooth coefficients on $U$ such that
\[
X=\sum_{j=1}^\m a_j X_j\qandq Y=\sum_{j=1}^\m b_j X_j
\]
where $(X_1,\ldots,X_\m)$ is a frame of horizontal left invariant
vector fields, namely a basis of the first layer $\cV_1\subset\Lie(\G)$.
It follows that 
\[
\begin{split}
[X,Y](p)&=\sum_{i,j=1}^\m a_j(p)b_l(p) [X_j,X_l](p) 
+\sum_{i,j=1}^\m a_j(p) X_jb_l(p) X_l(p) \\
&-\sum_{i,j=1}^\m b_l(p)X_la_j(p) X_j(p)\in T_p\Sigma.
\end{split}
\]
This shows that 
\[
\qa{\sum_{j=1}^\m a_j(p) X_j,\sum_{l=1}^\m b_l(p) X_l}(p)=\sum_{i,j=1}^\m a_j(p)b_l(p) [X_j,X_l](p) \in T_p\Sigma.
\]
We could have extended any choice of values $a_j(p)$ and $b_j(p)$
to a neighborhood of $p$ in a $C^1$ smooth way.
This means that we can choose any couple of vectors in $T_p\Sigma$,
consider their associated left invariant vector fields and observe that their
Lie bracket evaluated at the origin is in $dl_{p^{-1}}(T_p\Sigma)\subset H_0\G$,
namely their Lie bracket is in $\cV_1$.
This proves that $\cA_p\Sigma$ is a commutative subalgebra of
$\cV_1$, hence $p$ is a regular point and its homogeneous 
tangent space $A_p\Sigma=\exp\cA_p\Sigma$ is a horizontal
subgroup.
\end{proof}
\bt\label{t:horizC^1Smooth}
If $\Sigma$ is a $C^1$ smooth submanifold such that $T_p\Sigma\subset H_p\G$ 
for every $p\in\Sigma$, then $\Sigma$ is a horizontal submanifold.
\et
\begin{proof}
Let us consider a $C^1$ smooth local chart $\Psi:\Omega\to U$ of the
$C^1$ smooth horizontal submanifold $\Sigma\subset\G$.
Here $\Omega\subset\R^k$ is an open set and $U$ is an open
subset of $\Sigma$. The fact that $\Sigma$ is horizontal
precisely means that
\[
d\Psi(x)(\R^k)\subset H_{\Psi(x)}\G
\]
for a every $x\in\Omega$. These conditions coincides with the validity of contact equations, according to \cite{Mag14}. However, they do not ensure a priori that
a priori the subspace of $\cV_1$ associated to the subspace $d\Psi(x)(\R^k)$
is a commutative subalgebra.
To obtain this information we use \cite[Theorem~1.1]{Mag14}, according
to which $\Psi$ is also differentiable with respect to dilations and the group operation. In particular, this gives the existence of the following limit
\beq\label{eq:limL(v)}
\lim_{t\to0^+}\delta_{1/t}\pa{\Psi(x)^{-1}\Psi(x+tv)}=L_x(v)
\eeq
where $v\in\R^k$ and $L_x:\R^k\to\G$ is a Lie group homomorphism.
We fix now a point $p=\Psi(x_0)\in\Sigma$, observing that  
\[
H_0=L_{x_0}(\R^k)
\]
is a horizontal subgroup of $\G$. We fix a graded basis 
$(e_1,\ldots,e_\q)$ of $\G$, hence we set
\[
\Psi(x)=\sum_{j=1}^\q \psi_j(x) e_j \qandq
L_{x_0}(v)=\sum_{j=1}^\m (L_{x_0})_j(v) e_j.
\]
The Baker-Campbell-Hausdorff formula joined with the limit \eqref{eq:limL(v)} yields
\beq\label{eq:dpsi_jL_j}
d\psi_j(x_0)(v)=(L_{x_0})_j(v) \quad\text{for all $j=1,\ldots,\m$}.
\eeq
The same formula shows that the left invariant vector fields 
$X_1,\ldots,X_\q$ have a special polynomial form. 
Indeed assuming that $X_j(0)=e_j$, with the identification of $\G$ with $T_0\G$,
being $\G$ a linear space, we have
\[
X_j(x)=e_j+\sum_{l=\m+1}^\q a_{jl}(x) e_l,
\]
where $a_{jl}:\G\to\R$ a polynomials.
We have 
\[
\begin{split}
\dpar{\Psi}{x_k}(x)&=\sum_{j=1}^\q \dpar{\psi_j}{x_k}(x) e_j=
\sum_{j=1}^\m \dpar{\psi_j}{x_k}(x) e_j
+\sum_{j=\m+1}^\q \dpar{\psi_j}{x_k}(x) e_j  \\
&=\sum_{j=1}^\m \dpar{\psi_j}{x_k}(x)\, X_j(\Psi(x))
-\sum_{l=\m+1}^\q \sum_{j=1}^\m\dpar{\psi_j}{x_k}(x) a_{jl}(\Psi(x)) e_l
+\sum_{j=\m+1}^\q \dpar{\psi_j}{x_k}(x) e_j  \\
&=\sum_{j=1}^\m \dpar{\psi_j}{x_k}(x)\, X_j(\Psi(x)),
\end{split}
\] 
where in the last equality we have used the fact that 
any $\der_{x_k}\Psi(x)$ must be horizontal, namely
$\der_{x_k}\Psi(x)\in H_{\Psi(x)}\G$ for all $x\in \Omega$.
Applying the definition of algebraically regular point, we consider the left invariant vector fields
\[
Y_k=\sum_{j=1}^\m \dpar{\psi_j}{x_k}(x_0)\, X_j\in\cV_1
\quad \text{for $k=1,\ldots,\m$}.
\] 
Setting $(E_1,\ldots,E_k)$ as the canonical basis of $\R^k$,
by \eqref{eq:dpsi_jL_j} we define
\[
v_k=L_{x_0}(E_k)=\sum_{j=1}^\m (L_{x_0})_j(E_k) e_j=
\sum_{j=1}^\m \dpar{\psi_j}{x_k}(x_0) e_j\in H_0.
\]
Being $H_0$ a horizontal subgroup, it is in particular commutative, therefore
\[
[v_k,v_s]=\sum_{j,l=1}^\m \dpar{\psi_j}{x_k}(x_0)\dpar{\psi_l}{x_s}(x_0)[e_j,e_l]=0.
\]
This proves that 
\[
[Y_k,Y_s]=\sum_{j,l=1}^\m \dpar{\psi_j}{x_k}(x_0)\dpar{\psi_l}{x_s}(x_0)[X_j,X_l]=0,
\]
due to the isomorphism between the Lie product on $\G$
and $\Lie(\G)$, see Proposition~\ref{pr:GAlgGroupOpe}.
We have shown that 
\[
\cA_p\Sigma=\spn\set{Y_1,\ldots,Y_k}
\]
is commutative, hence $\Psi(x_0)$ is an algebraically regular point
and the homogeneous tangent space
$A_p\Sigma=\exp \cA_p\Sigma$ is a horizontal subgroup.
\end{proof}

\br As a consequence of the previous theorem, all $C^1$ smooth
Legendrian submanifolds in the Heisenberg group are horizontal submanifolds.
\er

\section{Transversal points and transversal submanifolds}\label{Sect:transversal}
This section is devoted to a class of submanifolds containing a specific type of algebraically regular point. 
We start with the following definition.
\begin{Def}[Vertical subgroup]\label{d:vertical_subgroup}\rm
We say that a homogeneous subgroup $N\subset\G$ is a {\em vertical subgroup} if
\beq\label{d:N}
N=N_\ell\oplus H^{\ell+1}\oplus\cdots\oplus H^\iota
\eeq
for some $\ell\in\set{1,\ldots,\iota}$ and a linear subspace $N_\ell\subset H^\ell$.
\ed
One may easily observe that any vertical subgroup is also a normal subgroup
of $\G$.
\begin{Def}[Transversal points and transversal submanifolds]\rm
Let $\Sigma\subset\G$ be a $C^1$ smooth submanifold. 
A {\em transversal point} $p$ of $\Sigma$ is an algebraically regular point, whose homogeneous tangent space is a vertical subgroup.
The submanifold $\Sigma$ is {\em transversal} if it contains at least one {\em transversal point}.
\ed
Transversal points can be characterized by their degree. To see this, 
we introduce the following integer valued functions $\ell_{\cdot},r_{\cdot}:\set{1,\ldots,\q}\to \N$. 
For every $\n=1,\ldots,\q$, the inequalities
\begin{equation}\label{d:ell}
\left\{\begin{array}{ll}
\ell_\n=\iota & \text{if }1\le \n\leq \h_\iota\\
\displaystyle\sum_{j=\ell_\n+1}^\iota \h_j <\n\leq \sum_{j=\ell_\n}^\iota \h_j\quad & \text{if } \h_\iota<\n\le \q
\end{array}\right.
\end{equation}
uniquely define the integer $\ell_\n\in\set{1,\ldots,\iota}$. Thus, we also define
\beq\label{d:rn}
\rr_\n:=\left\{\begin{array}{ll} 
\n & \text{if }  1\le \n\le \h_\iota\\  
\ds \n-\sum_{j=\ell_\n+1}^\iota  \h_j & \text{if }  \h_\iota<\n\le \q
\end{array}\right.
\end{equation}
for every $\n=1,\ldots,\q$, where $\rr_\n\ge1$. We finally set  
\beq\label{d:Q_n}
Q_\n=\ell_\n\,\rr_\n +\sum_{j=\ell_\n+1}^\iota j\,\h_j\,,
\eeq
where the sum is understood to be zero only in the case $1\le \n\le\h_\iota$,
that is $\ell_\n=\iota$. 

If $N\subset\G$ is an $\n$-dimensional vertical subgroup of the form \eqref{d:N},
it is not difficult to observe that the degree at every point of $N$ equals $Q_\n$ given in \eqref{d:Q_n} with 
\[
\dim N_\ell=\rr_\n\qandq \ell=\ell_\n.
\]
From formula \eqref{eq:n_d_Sigma}, taking into account Proposition~\ref{pr:translation}, it is not difficult
to realize that
\beq\label{eq:Q_nMax}
Q_\n=\max_{\Sigma\in\cS_\n(\G)} d(\Sigma).
\eeq
The set $\cS_\n(\G)$ denotes the family of $\n$-dimensional submanifolds of class $C^1$ 
that are contained in $\G$. The integer $d(\Sigma)$ is the degree of $\Sigma$
introduced in \eqref{d:degSigma}.

We are now in the position to prove the following characterization.
\bpr\label{pr:transv_maxdeg}
A point $p$ of an $\n$-dimensional $C^1$ smooth submanifold $\Sigma\subset\G$ is transversal if and only if $d_\Sigma(p)=Q_\n$.
\epr
\begin{proof}
If $p$ is transversal, using left translations we may assume that it
coincides with the origin. Using the coordinates of Theorem~\ref{t:specialcoord}
and applying formula \eqref{eq:homtansp}, the fact
that $A_0\Sigma$ is a transversal subgroup gives
\beq\label{eq:A0SigmaTransversal}
A_0\Sigma=\spn\set{e_{\m_{\ell-1}+1},\ldots,e_{\m_{\ell-1}+\rr},e_{\m_\ell+1},e_{\m_\ell+2},\ldots,e_\q}.
\eeq
We have assumed that $A_0\Sigma$ has the form of
\eqref{d:N} and $\dim N_\ell=\rr$.
From \eqref{eq:n_d_Sigma} we immediately get
\[
d_\Sigma(0)=\rr \ell +\sum_{j=\ell+1}^\iota j\,\h_j\,,
\]
where it must be $\rr=\rr_\n$ and $\ell=\ell_\n$, from \eqref{d:ell}
and \eqref{d:rn}. We have proved that $d_\Sigma(0)=Q_\n$.
It is not restrictive to assume $p=0$ also for the converse implication.
In this case we only know that $d_\Sigma(0)=Q_\n$.
Again, referring to the special coordinates of Theorem~\ref{t:specialcoord}
and the corresponding formula \eqref{eq:n_d_Sigma}, 
the previous equality implies that
\beq\label{eq:alpha_jtransv}
\left\{\begin{array}{ll} 
\alpha_j=0 &  \text{if }j<\ell_\n\\
\alpha_j=\rr_\n &  \text{if } j=\ell_\n \\ 
\alpha_j=\h_j  &  \text{if } j>\ell_\n 
\end{array}\right..
\eeq
Applying formula \eqref{eq:homtansp}, we have shown that
$A_0\Sigma$ must be a vertical subgroup.
\end{proof}
%
%
%
\begin{comment} 
If it were known the weaker statement $d(\Sigma)\leq Q_\n$
implies $\dim_\cH(\Sigma)\le Q_\n$, then 
we would have
\[
Q_\n=\max_{\Sigma\in\cS_\n(\G)} \dist_\cH(\Sigma)
\]
where $\dim_\cH(A)$ denotes the Hausdorff dimension of 
a subset $A\subset\G$ with respect to any homogeneous distance of $\G$.
\end{comment}
\br
The previous proposition and formula \eqref{eq:Q_nMax} show that
any transveral point has maximum degree.
\er
We finally observe that with the assumptions of Corollary~\ref{c:matriceCcont}, when $0\in\Sigma$ is transversal, the matrix $C$ of \eqref{eq:matriceCcont} becomes
\begin{equation}\label{eq:matriceCtransv}
C=\left(\begin{array}{c|c|c|c|c}
\vdots  & \vdots & \cdots & \cdots & \ast\\
\hline
\ast  & \ast & \cdots & \cdots & \ast\\
%\hline
%\vdots   & \cdots & \cdots & \cdots & \ast\\
\hline
I_{\rr_\n}+o(1)  & o(1) & \cdots & \cdots & 0  \\
o(1)  & \ast & \cdots & \cdots & \ast\\
\hline
o(1)   & I_{\h_{\ell_\n+1}}+o(1) & o(1) & \cdots & o(1)\\
\hline
\vdots  & o(1) & \ddots & \cdots & \vdots\\
\hline
\vdots  & \vdots & \cdots & \ddots & o(1) \\
\hline
\vdots  & \vdots & \cdots & o(1)& Id_{\h_\iota}+o(1)
\end{array}\right)\,,
\end{equation}
where $\rr_\n$ and $\ell_\n$ are defined in \eqref{d:ell} and \eqref{d:rn}, respectively. Indeed Proposition~\ref{pr:transv_maxdeg} shows that
$d_\Sigma(0)=Q_\n$ holds and this implies the validity of 
the conditions \eqref{eq:alpha_jtransv}.
%
%
%

%%%%%%%%%%%%%%%%%%%%%%%%%%%%%%%%%%%%%%%%%%%%%%%%%%%%%%%%%%
%
%
%
%        SECTION ON BLOW-UP AT POINTS OF MAXIMUM DEGREE
%
%
%
\section{Blow-up at points of maximum degree}
%
%
%
%
%
%
%
%%%%%%%%%%%%%%%%%%%%%%%%%%%%%%%%%%%%%%%%%%%%%%%%%%%%%%%%%%

The general structure of \eqref{eq:matriceCcont} will be also used in the proof of Theorem~\ref{t:LocExpSurf}. The following theorem represents the main technical tool of this paper.
%
%
%                       UPPER BLOW-UP THEOREM
%
%
\begin{The}[Blow-up]\label{t:LocExpSurf}
Let $\Sigma\subset\G$ be a $C^1$ smooth submanifold of topological dimension $\n$ and degree $\NN$. Let $p\in\Sigma$ be an algebraically regular point of maximum degree $\NN$ and let $A_p\Sigma$ be
its homogeneous tangent space. 
We assume that one of the following assumptions holds:
\begin{enumerate}
\item
$p$ is a horizontal point, %$\NN=\n\le \m$ and $A_p\Sigma$ is a horizontal subgroup,
\item
$\G$ has step two, % $\iota=2$ and $A_p\Sigma$ is a subgroup of $\G$,
\item
$\Sigma$ is a one dimensional submanifold,
\item
$p$ is a transversal point.
\end{enumerate}
For the translated submanifold 
\[
\Sigma_p=p^{-1}\Sigma,
\]
we introduce the $C^1$ smooth homeomorphism $\eta:\R^\n\to\R^\n$ by
\begin{equation}\label{eq:etat}
\eta(t)=\bigg(\frac{|t_1|^{b_1}}{b_1}\sgn(t_1),\ldots,\frac{|t_p|^{b_\n}}{b_\n}\sgn(t_\n)\bigg),
\end{equation}
where each $b_i$ is defined in \eqref{e:degSigma}.  If $\psi$ denotes
the mapping of Theorem~\ref{t:specialcoord} applied to the 
translated submanifold $\Sigma_p$, we define the $C^1$ smooth mapping 
\beq\label{d:Gammaeta}
\Gamma=\psi\circ\eta
\eeq
and we define the subset of indexes $I\subset\set{1,\ldots,\q}$ such that
\beq\label{e:Indexes}
A_0\Sigma_p=\spn\set{e_l: l\in I}=\spn\set{e_1,\ldots,e_{\alpha_1},e_{\m_1+1},\ldots,e_{\m_1+\alpha_2},\ldots,e_{\m_{\iota-1}+\alpha_\iota}},
\eeq
then the following local expansion holds
\beq\label{eq:estim}
\Gamma_s(t)=\left\{\begin{array}{ll} 
\frac{|t_{s-\m_{d_s-1}+\mu_{d_s-1}}|^{d_s}}{d_s}
\sgn(t_{s-\m_{d_s-1}+\mu_{d_s-1}}) & \mbox{if $s\in I$} \\
o(|t|^{d_s}) & \mbox{if $s\notin I$}
\end{array}\right..
\eeq
\end{The}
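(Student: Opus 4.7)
The plan is to reduce to $p = 0$ by translation invariance via Proposition~\ref{pr:translation}, apply the special coordinates of Theorem~\ref{t:specialcoord} to obtain the parametrization $\psi: U \to \Sigma_p$ with Jacobian structure \eqref{e:matriceC} and graph form \eqref{eq:psicoord}, and then analyse the components $\Gamma_s = \psi_s \circ \eta$ according to whether $s \in I$ or $s \notin I$. By Proposition~\ref{pr:homtansp}, the indices in $I$ described by \eqref{e:Indexes} correspond exactly to the identity-row positions of \eqref{e:matriceC}.

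For $s \in I$, set $k(s) = s - \m_{d_s - 1} + \mu_{d_s - 1}$. Formula \eqref{eq:psicoord} gives $\psi_s(y) = y_{k(s)}$, and the inequalities $\mu_{d_s - 1} < k(s) \le \mu_{d_s}$ combined with \eqref{e:degSigma} yield $b_{k(s)} = d_s$. Hence
\[
\Gamma_s(t) = \eta_{k(s)}(t) = \frac{|t_{k(s)}|^{d_s}}{d_s}\sgn(t_{k(s)}),
\]
matching the first case of \eqref{eq:estim}.

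For $s \notin I$, let $j = d_s$. From the star-row position of $s$ in row block $j$ in \eqref{e:matriceC}, one has $\partial_{y_i}\psi_s(0) = 0$ for every $i \le \mu_j$, so the first-order $C^1$ Taylor expansion reads $\psi_s(y) = \sum_{i > \mu_j}\partial_{y_i}\psi_s(0)\,y_i + R_s(y)$ with $R_s(y) = o(|y|)$. For $i > \mu_j$ one has $b_i > d_s$, so $|\eta_i(t)| \le |t|^{b_i} = o(|t|^{d_s})$ and the explicit sum contributes $o(|t|^{d_s})$. The nontrivial point is to promote $R_s(\eta(t)) = o(|\eta(t)|) = o(|t|)$ to $o(|t|^{d_s})$, which matters precisely when $d_s \ge 2$.

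This refinement is where the four hypotheses enter. Under the maximum-degree assumption each case forces \eqref{eq:matriceCcont} to specialise respectively to \eqref{eq:matriceCcontHor}, \eqref{eq:matriceCcontiota2}, \eqref{eq:matriceCcontn1}, or \eqref{eq:matriceCtransv}, where the ``$0$'' blocks are \emph{identically} null on a neighborhood of the origin, not merely zero at the origin. Using \eqref{e:derjXj} together with the polynomial expansion $X_{s'}(x) = e_{s'} + \sum_{l:\,d_l > d_{s'}} a_{s',l}(x)\,e_l$ with $a_{s',l}(0) = 0$ (a direct consequence of the graded BCH formula and Proposition~\ref{pr:GAlgGroupOpe}), the $e_s$-component of $\partial_i\psi$ equals
\[
\partial_{y_i}\psi_s(y) = C^s_i(y) + \sum_{s':\, d_{s'} < d_s} C^{s'}_i(y)\,a_{s',s}(\psi(y)).
\]
For every pair $(s,i)$ where the specialisation forces $C^s_i \equiv 0$, this gives $|\partial_{y_i}\psi_s(y)| \le C|\psi(y)|$; bootstrapping by integrating along the coordinate axes sorted by increasing $b_i$ then converts the bare $o(|y|)$ remainder into the graded $o(|t|^{d_s})$ estimate required by \eqref{eq:estim}. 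The main obstacle is implementing this bootstrap uniformly across the four cases, whose identically-null patterns differ markedly: (1) and (3) are structurally simplest (in (1) every $b_i = 1$ and only higher-layer coordinates need estimating; in (3) there is only one coordinate direction), (2) requires only the single identically-null block of \eqref{eq:matriceCcontiota2}, and (4) requires the full layered structure of \eqref{eq:matriceCtransv}, but the mechanism is the same throughout.
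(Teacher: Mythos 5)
Your reduction to $p=0$, the treatment of the indices $s\in I$ via the graph form \eqref{eq:psicoord}, and the observation that the linear part of the Taylor expansion contributes $o(|t|^{d_s})$ for $s\notin I$ all match the paper. You also correctly locate the real difficulty (upgrading the $C^1$ remainder from $o(|y|)$ to $o(|t|^{d_s})$) and correctly note that the four hypotheses force certain blocks of \eqref{eq:matriceCcont} to be \emph{identically} null, which is how the paper kills the analogue of its term $T_3$. The paper's actual proof is an induction on the layer $k$, with the derivative $\der_{t_i}\Gamma_s$ split into three sums $T_1+T_2+T_3$ according to whether $d_l<b_i$, $d_l=b_i$ or $d_l>b_i$; your ``bootstrap sorted by increasing $b_i$'' is morally that induction.

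However, there is a genuine gap: your mechanism never uses the hypothesis that $p$ is \emph{algebraically regular}, i.e.\ that $A_p\Sigma$ is a subgroup, and the estimate you propose is too crude to close the argument without it. The bound $|\der_{y_i}\psi_s(y)|\le C|\psi(y)|$ applied after the substitution $y=\eta(t)$ is saturated by the term $|t_i|^{b_i-1}\,C^{l}_i(\Gamma(t))\,a_{l}^{s}(\Gamma(t))$ with $l\in I$, $d_l=b_i<d_s$ and $C^{l}_i(0)=1$ (the identity entries of \eqref{eq:matriceCcont}): since $a_l^s$ is $\delta_r$-homogeneous of degree $d_s-d_l$ and the $I$-components of $\Gamma$ are exactly the graph coordinates of order $|t|^{d_v}$, this term is generically of order exactly $|t|^{d_s-1}$, so integration yields only $O(|t|^{d_s})$, never $o(|t|^{d_s})$, no matter how the coordinate axes are ordered. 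The paper escapes precisely here by invoking the subgroup structure of $A_0\Sigma_p$ through the BCH structure lemma (\cite[Lemma~2.5]{Mag13Vit}): for $s\notin I$ and $l\in I$ the polynomial $a^s_l$ is a sum of monomials each containing a factor $x_v$ with $v\notin I$ and $d_v<d_s$, so the inductive hypothesis $\Gamma_v(t)=o(|t|^{d_v})$ for $v\notin I$ converts the $O$ into the required $o$. Without this input the statement is simply false at points whose h-tangent space is not a subgroup, so this step cannot be omitted; your proof as written would need to be supplemented by exactly this algebraic argument for the middle sum $T_2$.
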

\begin{proof}
Taking into account Proposition~\ref{pr:translation}, the translated manifold
$\Sigma_p$ has the same degree of $\Sigma$, therefore
\[
d_{\Sigma_p}(0)=d_\Sigma(p)=\NN.
\]
Thus, the origin $0\in\Sigma_p$ is a point of maximum degree for $\Sigma_p$.
By Theorem~\ref{t:specialcoord}, following its notation, there exists a special graded
basis $(e_1,\ldots,e_\q)$, along with a $C^1$ smooth embedding
$\Psi:U\to\Sigma_p$ with $\Psi(0)=0\in\G$ and 
\beq\label{d:defPsi}
\Psi(y)=\sum_{j=1}^\q \psi_j(y) e_j,
\eeq
that satisfies both conditions \eqref{e:matriceC} and \eqref{eq:psicoord}.
For our purposes, it is not restrictive to assume that $\Psi$ is a $C^1$ diffeomorphism.
We also introduce the basis $(X_1,\ldots,X_\q)$ of $\Lie(\G)$ such that 
$X_i(0)=e_i$ for all $i=1,\ldots,\q$ and consider graded coordinates $(x_i)$ of a point $p$,
such that $p=\sum_{i=1}^\q x_i e_i\in\G$. With respect to these coordinates, the vector fields  
\begin{equation}\label{X_i}
X_i=\sum_{l=1}^n a_i^l\,\der_{x_l}
\end{equation}
satisfy the following conditions 
\begin{equation}\label{eq:ailX}
a_i^l=\left\{\begin{array}{ll}
\delta_i^l & 	d_l\leq d_i \\
\mbox{\scriptsize  polynomial of homogeneous degree $d_l-d_i$} & d_l>d_i
\end{array}\right..
\end{equation}
The homogeneity here refers to intrinsic dilations of the group, namely
\beq\label{e:homoga_il}
a_i^l(\delta_rx)=r^{d_l-d_i}a_i^l(x)
\eeq
for all $r>0$ and $x\in\G$, see e.g.\ \cite{Stein93}.
We can further assume that there exists $c_1>0$ sufficiently small such that the domain $U$ of the above diffeomorphism $\Psi$ is defined on $(-c_1,c_1)^\n$.
The continuous functions $C^s_i$ in \eqref{e:derjXj} can be assumed to be defined on
a common interval $(-c_1,c_1)$, where $C^s_i(0)$ is the $(s,i)$ entry of the matrix \eqref{e:matriceC}. For the sequel, it is convenient to recall formula \eqref{e:derjXj} here
\begin{equation}\label{derjgamma}
(\der_i\psi)(y)=\sum_{s=1}^\q C^s_i(\psi(y)) X_s(\psi(y))\quad \mbox{for all}\quad i=1,\ldots,\n
\end{equation}
for all $y\in(-c_1,c_1)^\n$. Thus, from \eqref{eq:etat} and \eqref{d:Gammaeta} we have the partial derivatives 
\begin{equation}\label{eq:partjG}
\der_{t_i}\Gamma(t)=|t_i|^{b_i-1}\,(\der_i\psi)(\eta(t))=|t_i|^{b_i-1}\sum_{l,s=1}^\q C^l_i(\Gamma(t))\, a_l^s(\Gamma(t))\,\der_{x_s}
\end{equation}
for all $i=1,\ldots,\n$, where we have used both \eqref{X_i} and \eqref{derjgamma}.

The main point is to prove by induction the validity of the following statement. For each $j=1,\ldots,\iota$, if $0\le\alpha_j<\h_j$ there holds
\begin{equation}\label{eq:induct0}
\Gamma_s(t)=o(|t|^j) \qs{for} \m_{j-1}+\alpha_j<s\leq \m_j.
\end{equation}
Notice that in the case $\alpha_j=\h_j=\m_j-\m_{j-1}$ there is nothing to prove and the statement is automatically satisfied.

Let us first establish the case $j=1$.
If $\alpha_1=0$, in all of the four assumptions where this condition applies, 
we have $b_i\ge2$ for each $i=1,\ldots,\n$,
therefore \eqref{eq:partjG} gives 
\[
\nabla\Gamma_s(0)=0 \quad \mbox{for every $s=1,\ldots,\q$}.
\] 
If $0<\alpha_1<\m_1$ and $\alpha_1<s\le\m_1$, again in all four assumptions, 
due to \eqref{e:matriceC}, we get
\[
\der_{x_i}\Gamma_s(0)=0\qs{for all $i=1,\ldots,\alpha_1$.}
\]
In view of \eqref{eq:partjG}, the previous equalities extend to all $i=\alpha_1+1,\ldots,\n$,
being $b_i\ge2$. In both cases, the vanishing of $\Gamma_s(0)$ and $\nabla\Gamma_s(0)$ 
for any $s=\alpha_1+1,\ldots,\m_1$ and in all of our four assumptions proves our inductvie assumption \eqref{eq:induct0} for $j=1$.

Now, we assume by induction the validity of \eqref{eq:induct0} for all $j=1,\ldots,k-1$, where $2\le k\le\iota$. We wish to prove this formula for $j=k$, in the nontrivial case $0\le\alpha_k<\h_k$.
Let us write the general formula \eqref{eq:partjG} for partial derivatives 
\begin{equation}\label{eq:derGammas}
\der_{t_i}\Gamma_s(t)=|t_i|^{b_i-1}\LLs C^s_i(\Gamma(t))+\sum_{l: d_l<d_s} C^l_i(\Gamma(t))\, a_l^s(\Gamma(t))\RRs,
\end{equation}
where $s=\m_{k-1}+1,\ldots,\m_k$. 
We consider the following possibilities: 
\[
b_i<k, \quad b_i=k\quad \mbox{ and }\quad b_i>k.
\] 
Let us begin with the case $b_i<k$. 
If $\alpha_k>0$ and consider $\m_{k-1}+\alpha_k<s\leq \m_k$, then
the structure of \eqref{eq:matriceCcont} and the fact that $b_i<k$ yield
\beq\label{eq:C_i^s}
C^s_i\equiv0. 
\eeq
If $\alpha_k=0$ and the fourth assumption holds, then the special structure of $C$,
see \eqref{eq:matriceCtransv}, implies that $\alpha_j=0$ for all $j=1,\ldots,k-1$. This gives  
$b_i\ge k+1$ for all $i=1,\ldots,\n$. Taking into account the form \eqref{eq:etat} of $\eta$
and the composition \eqref{d:Gammaeta}  we clearly have
\[
\Gamma_s(t)=O(|t|^{k+1})=o(|t|^k)
\]
for all $s=1,\ldots,\q$ and in particular \eqref{eq:induct0} is established.
If $\alpha_k=0$ and the first assumption holds, then the form \eqref{eq:matriceCcontHor} always gives 
\beq\label{eq:C_i^sall}
C^s_i\equiv0 \quad\mbox{for}\quad \m_1\le \m_{k-1}<s\le \q\qandq i=1,\ldots,\n. 
\eeq
If $\alpha_k=0$ and the second assumption holds, then $\iota=2$ and we only have
the case $k=2$, namely $\alpha_2=0$. From the form \eqref{eq:matriceCcontiota2}, 
then 
\beq\label{eq:C_i^s2step}
C^s_i\equiv0 \quad\mbox{for}\quad \m_1<s\le \q\qandq i=1,\ldots,\n. 
\eeq
\begin{comment}
It seems that the argument in the second assumption can extend to the
general case, with no assumption on the matrix $C(0)$. In fact, also in
the general case the assumption $\alpha_k=0$ implies that on the block
on the left side of the diagonal block of degree $k$, we have all zeros.
Compare with the form of \eqref{eq:matriceCcont}.
\end{comment}
If $\alpha_k=0$ and the third assumption holds, then $\n=1$ and the
condition $b_i<k$ gives
\beq\label{e:b1N}
b_1=\NN<k=d_s\quad \mbox{for all  $s=\m_{k-1}+1,\ldots,\m_k$},
\eeq
so that the form \eqref{eq:matriceCcontn1} yields
\beq\label{eq:C_i^D=1}
C^s_1\equiv0 \quad\mbox{for}\quad \m_{k-1}<s\le \m_k. 
\eeq
We are interested in the case $s=\m_{k-1}+1,\ldots,\m_k$  and $i=1,\ldots,\mu_{k-1}$, 
therefore the vanishing of $C_i^s$ joined with \eqref{eq:derGammas} gives 
\beq\label{e:T123}
\begin{split}
\der_{t_i}\Gamma_s(t)&=|t_i|^{b_i-1}\sum_{l: d_l<k} C^l_i(\Gamma(t))\, a_l^s(\Gamma(t)) \\
&=|t_i|^{b_i-1}\sum_{l: d_l<b_i<k} C^l_i(\Gamma(t))\, a_l^s(\Gamma(t))+
|t_i|^{b_i-1}\sum_{l: d_l=b_i<k} C^l_i(\Gamma(t))\, a_l^s(\Gamma(t)) \\
&+|t_i|^{b_i-1}\sum_{l: b_i<d_l<k} C^l_i(\Gamma(t))\, a_l^s(\Gamma(t))=T_1+T_2+T_3.
\end{split}
\eeq
We have denoted by $T_1,T_2$ and $T_3$ the first, second and third addend, respectively. To study $T_1$, we use the graph form of $\psi$ given by \eqref{eq:psicoord}.
In fact, whenever $\alpha_j>0$ we have the identity
\beq\label{e:bsmj-1}
b_{s-\m_{j-1}+\mu_{j-1}}=j
\eeq
for $\m_{j-1}<s\leq \m_{j-1}+\alpha_j$ and $j=1,\ldots,\iota$, hence \eqref{eq:etat} 
and \eqref{d:Gammaeta} yield
\beq\label{e:specialgraphform}
\Gamma_s(t)=\frac{|t_{s-\m_{j-1}+\mu_{j-1}}|^j}{j}
\sgn(t_{s-\m_{j-1}+\mu_{j-1}})=\frac{|t_{s-\m_{j-1}+\mu_{j-1}}|^{d_s}}{d_s}
\sgn(t_{s-\m_{j-1}+\mu_{j-1}}).
\eeq
Each polynomial $a^s_l$ in the sum of $T_1$ has homogeneous degree $k-d_l$, hence 
it does not depend on the variables $x_i$, with $i>\m_{k-1}$. 
As a consequence of \eqref{e:specialgraphform}, for all $s=\m_{k-1}+1,\ldots,\m_k$, the homogeneity \eqref{e:homoga_il} of $a_l^s$, when joined with our inductive assumption also implies that
\[
a^s_l(\Gamma(t))=a^s_l(\Gamma_1(t),\ldots,\Gamma_{\m_{k-1}}(t))=O(|t|^{k-d_l}).
\]
\begin{comment}
This can be proved as follows
\[
a_l^s(\delta_{|t|}\pa{\delta_{1/|t|}\Gamma(t)})=|t|^{d_s-d_l} a_l^s\pa{\delta_{1/|t|}\Gamma(t)}
\]
since $\delta_{1/|t|}\Gamma(t)=O(1)$ as $t\to 0$.
\end{comment}
This immediately shows that $T_1(t)=O(|t|^k)=o(|t|^{k-1})$.
We now consider the second addend
\[
T_2(t)=|t_i|^{b_i-1}\sum_{l: d_l=b_i<k} C^l_i(\Gamma(t))\, a_l^s(\Gamma(t))
\]
and set $j=b_i$. The conditions $d_l=b_i<k$ give
\beq\label{e:ranges_il}
\mu_{j-1}<i\le\mu_j\qandq \m_{j-1}<l\le\m_j.
\eeq
We consider the general case where $0\le\alpha_k<\h_k$.
Since $b_i=j$ we have $\alpha_j>0$, therefore taking into account \eqref{eq:matriceCcont}, 
for $\m_{j-1}<l \le \m_{j-1}+\alpha_j$ it follows that 
\beq\label{e:deltaCi^l}
C_i^l=\delta_{i-\mu_{j-1}}^{l-\m_{j-1}}+o_i^l(1)
\eeq
where $o_i^l(1)$ vanish at the origin. When $\m_{j-1}+\alpha_j<l \le \m_j$, we have
\[
C_i^l=o_i^l(1)
\]
and $o_i^l(1)$ vanish at zero. In view of \eqref{e:deltaCi^l}, for $i$ and $l$ in the ranges 
\eqref{e:ranges_il}, we set
\[
\m_{j-1}<l_{ij}:=i-\mu_{j-1}+\m_{j-1}\le \m_{j-1}+\alpha_j,
\]
therefore we obtain the expression
\beq\label{e:T_2new}
T_2(t)=|t_i|^{b_i-1}\LLs\sum_{\substack{l: d_l=b_i<k\\ l\neq l_{ij} }} o^l_i(1)\, a_l^s(\Gamma(t)) +a_{l_{ij}}^s(\Gamma(t))\RRs.
\eeq
Arguing as before, formulae \eqref{e:specialgraphform} and the inductive assumption
imply that
\[
|t_i|^{b_i-1} a_l^s(\Gamma(t))=|t_i|^{j-1} O(|t|^{k-d_l})=|t_i|^{j-1} O(|t|^{k-j})=
O(|t|^{k-1}).
\]
It follows that 
\beq\label{eq:T_2}
T_2(t)=o(|t|^{k-1}) +|t_i|^{b_i-1}a_{l_{ij}}^s(\Gamma(t)).
\eeq
The behavior of the second addend in the previous equality requires a special study,
that precisely relies on the group structure that is assumed on $A_0\Sigma_p$. Taking into account the definition of the set of indexes $I$ defined through \eqref{e:Indexes}, in view of \cite[Lemma~2.5]{Mag13Vit}, if the group operation is given by the polynomial formula
\[
xy=x+y+Q(x,y) 
\]
with respect to our fixed graded coordinates, then the polynomial $Q_s$, with $s\notin I$,
is given by the formula
\[
Q_s(x,y)=\sum_{v: d_v<k, v\notin I} x_v R_{sv}(x,y) +y_v U_{sv}(x,y).
\]
Both polynomials $R_{sv}$ and $U_{sv}$ have homogeneous of degree  $k-d_v$.
Since we have $\m_{j-1}< l_{ij}\le\m_{j-1}+\alpha_j$, the condition $l_{ij}\in I$ gives
\[
\dpar{Q_s}{y_{l_{ij}}}(x,0)=a^s_{l_{ij}}(x)=\sum_{v: d_v\le k-j, v\notin I}x_v\dpar{R_{sv}}{y_{l_{ij}}}(x,0),
\] 
where we have used the relationship between left invariant vector fields and group operation,
along with the fact that $v\neq l_{ij}$ for all $v\notin I$.
As we have already observed, $a_{l_{ij}}^s$ only depends on $(x_1,\ldots,x_{\m_{k-1}})$ and by our inductive assumption \eqref{eq:induct0}
\[
\Gamma_v(t)=o_v(|t|^{d_v})\qs{whenever} \mbox{$d_v<k$ and $v\notin I$}.
\]
Precisely, for all of these $v's$, we have $o_v(|t|^{d_v})/|t|^{d_v}\to 0$ as $t\to0$ and there holds
\[
a_{l_{ij}}^s(\Gamma(t))=\sum_{v: d_v\le k-j, v\notin I}o_v(|t|^{d_v})\, \dpar{R_{sv}}{y_{l_{ij}}}(\Gamma(t),0),
\]
Again, the inductive assumption gives $\der_{y_{l_{ij}}}R_{sv}(\Gamma(t),0)=O(|t|^{k-d_v-j})$, that is
\[
o_v(|t|^{d_v})\, \dpar{R_{sv}}{y_{l_{ij}}}(\Gamma(t),0)=o(|t|^{k-j}),
\]
therefore $a^s_{l_{ij}}(\Gamma(t))=o(|t|^{k-j})$. We have finally proved that
\[
T_2(t)=o(|t|^{k-1}).
\]
The treatment of the addend 
\[
T_3=|t_i|^{b_i-1}\sum_{l: b_i<d_l<k} C^l_i(\Gamma(t))\, a_l^s(\Gamma(t))
\]
in \eqref{e:T123} strongly relies on our special
four assumptions. Without these assumptions, it is not clear whether for instance the factors $C_i^l(\Gamma(t))$ for $b_i<d_l<k$ behave like $o(|t|^{d_l-b_i})$, since $C_i^l$ are only continuous.

If the first assumption holds, then the special form \eqref{eq:matriceCcontHor} of $C$
immediately proves that there cannot exist nonvanishing coefficients $C_i^l$ 
whenever $b_i<d_l$, hence $T_3\equiv0$.
If the second assumption holds, then $1\le b_i<d_l<k$ implies
$k\ge 3$, that conflicts with the 2-step assumption on $\G$, therefore $T_3\equiv0$.
%
\begin{comment}
First ideas for the general case. From \eqref{eq:matriceCcont}, we distinguish the case where 
$C_i^l$ vanishes for $m_{d_l-1}+\alpha_l<l\le m_{d_l}$ and the case
where it does not since $m_{d_l-1}<l\le m_{d_l-1}+\alpha_l$.
Is there any hope that for these $l'$s we are able to establish a better behavior of 
$a_l^s(\Gamma(t))$, due to the assumption that $A_p\Sigma$ is a subgroup?
Another more promising technique is to write explicitly the coefficients
$C_i^l$ for the entries  $m_{d_l-1}<l\le m_{d_l-1}+\alpha_l$, where they do
not vanish a priori. In fact, these coefficients can be written in terms of the
partial derivatives of $\Gamma$.
\end{comment}
% 
If the third assumption holds, then $\n=1=i$ and \eqref{e:b1N} gives
\[
b_1=\NN<d_l
\]
that joined with the special form \eqref{eq:matriceCcontn1} gives 
$C_1^l\equiv0$, therefore $T_3\equiv0$ also in this case.
In the fourth assumption, where $p$ is a transveral point, we consider
the integer  $\ell_\n$ defined in \eqref{d:ell}.
By definition \eqref{d:rn}, according to \eqref{eq:matriceCtransv}, we have
\[
\alpha_{\ell_\n}=\rr_n\ge 1\qandq b_i\ge \ell_\n,
\]
therefore $k>\ell_\n$. This implies that $\alpha_k=\h_k$, hence the 
inductive assumption is automatically satisfied.
Collecting all of the previous cases, we conclude that in any of the four assumptions
for $b_i<k$, we have that either the inductive assumption \eqref{eq:induct0} is satisfied or
we have
\[
\der_{t_i}\Gamma_s(t)=o(|t|^{k-1}).
\]
In the case $b_i=k$, then $\alpha_k>0$ and the condition $\m_{k-1}+\alpha_k<s\le\m_k$ joined with the form of \eqref{eq:matriceCcont} yields
\[
C_i^s(\Gamma(t))=o(1),
\]
therefore \eqref{eq:derGammas} gives
\[
\der_{t_i}\Gamma_s(t)=|t_i|^{k-1}\LLs o(1)+\sum_{l: d_l<k} C^l_i(\Gamma(t))\, a_l^s(\Gamma(t))\RRs.
\]
In the previous sum the condition $d_s=k>d_l$ yields $a^s_l(0)=0$, therefore also in the
case $b_i=k$ we have
\[
\der_{t_i}\Gamma_s(t)=o(|t|^{k-1}).
\]
When $b_i>k$, there obviously holds 
\beqas
\der_{t_i}\Gamma_s(t)&=&|t_i|^{b_i-1}\LLs C^s_i(\Gamma(t))+\sum_{l: d_l<d_s} C^l_i(\Gamma(t))\, a_l^s(\Gamma(t))\RRs \\
&=&|t_i|^{b_i-1} O(1)=o(|t|^{k-1}).
\eeqas
Joining all the previous results, it follows that $\nabla \Gamma_s=o(|t|^{k-1})$, hence
\[
\Gamma_s(t)=o(|t|^k),
\]
proving the induction step. This proves our claim \eqref{eq:estim}.
\end{proof}

%%%%%%%%%%%%%%%%%%%%%%%%%%%%%%%%%%%%%%%%%%%%%%%%%%%%%%%%%%
%
%
%
%								       		UPPER BLOW-UP OF THE INTRINSIC MEASURE
%
%
%
\section{Measure theoretic area formula in homogeneous groups}\label{sect:meastheo}
%
%
%
%
%
%
%
%%%%%%%%%%%%%%%%%%%%%%%%%%%%%%%%%%%%%%%%%%%%%%%%%%%%%%%%%%
We introduce some preliminary results and notions that will
be needed in the next sections.
The symbol $\G$ always denotes a homogeneous group equipped with a homogeneous distance $d$.
%
%					DIFFERENTIATION OF MEASURES IN HOMOGENEOUS GROUPS
%
\subsection{Differentiation of measures in homogeneous groups}
We denote by $\cF_b$ the family of closed balls in $\G$ having positive radius.
The properties of the homogeneous distance give $\diam(B(x,r))=2r$ for all $x\in\G$ and $r>0$. As a consequence, if $\mu:\cP(X)\to[0,+\infty]$ is a measure, then 
one easily realizes that the family of sets
\beq\label{eq:Smu_zetab}
\cS_{\mu,\zeta_{b,\alpha}}=\cF_b\sm\{S\in\cF_b: \zeta_{b,\alpha}(S)=\mu(S)=0\ \mbox{or}\ \zeta_{b,\alpha}(S)=\mu(S)=+\infty \}=\cF_b,
\eeq
where we have defined 
\[
\zeta_{b,\alpha}:\cF_b\to[0,+\infty),\quad \zeta_{b,\alpha}(S)=\frac{\diam(S)^\alpha}{2^\alpha}.
\]
%
%
%                CARATHEODORY'S CONSTRUCTION
%
\begin{Def}[Carath\'eodory construction]\label{def:sphericalH}\rm 
Let $\cF\subset\cP(\G)$ denote a nonempty family of closed subsets
and fix $\alpha>0$. If $\delta>0$ and $E\subset\G$, we define
\begin{equation}\label{d:phialpha}
\phi^\alpha_\delta(E)=\inf \bigg\lbrace\sum_{j=0}^\infty \frac{\diam(B_j)^\alpha}{2^\alpha}: E\subset \bcup_{j\in\N} B_j ,\, \diam(B_j)\le\delta,\, B_j\in\cF \bigg\rbrace\,,
\end{equation}
where the diameter $\diam B_j$ is computed with respect to the distance $d$ on $\G$.
If $\cF$ coincides with the family of closed balls $\cF_b$, then we set
\beq\label{d:S_0^alpha}
\cS^\alpha_0(E)=\sup_{\delta>0}\phi^\alpha_\delta(E)
\eeq
to be the {\em $\alpha$-dimensional spherical measure} of $E$.
In the case $\cF$ is the family of all closed sets and $k\in\set{1,2,\ldots,\q-1}$, we define the Hausdorff measure
\beq
\cH_{|\cdot|}^k=\cL^k(\set{x\in\G: |x|\le 1})  \sup_{\delta>0}\phi^k_\delta(E)
\eeq
where $\cL^k$ denotes the Lebesgue measure 
and $|\cdot|$ is the norm arising from the fixed graded scalar product on
$\G$.
\end{Def}

Observing that $\cF_b$ covers any subset finely, according to the terminology in
\cite[2.8.1]{Federer69} and that condition \eqref{eq:Smu_zetab} holds, we can 
apply Theorem~11 in \cite{Mag30} to the metric space $(\G,d)$,
establishing the following result.
\begin{The}\label{the:metricspherical}
Let $\alpha>0$ and let $\mu$ be a Borel regular measure over $\G$ 
such that there exists a countable open covering of $\G$, whose elements have $\mu$ finite measure. If $B\subset A\subset \G$ are Borel sets, then $\theta^\alpha(\mu,\cdot)$ is Borel on $A$. In addition, if $\cS^\alpha(A)<+\infty$
and $\mu\res A$ is absolutely continuous with respect to $\cS^\alpha\res A$, then we have
\begin{equation}\label{eq:spharea}
 \mu(B)=\int_B \theta^\alpha(\mu,x)\,d\cS_0^\alpha(x)\,.
\end{equation}
\end{The}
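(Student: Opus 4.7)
The plan is to apply the general metric measure-theoretic differentiation result, Theorem~11 of \cite{Mag30}, to the homogeneous group $(\G,d)$ with the fine covering class $\cF_b$ of closed metric balls and the gauge $\zeta_{b,\alpha}(S) = \diam(S)^\alpha / 2^\alpha$. The key point is that all structural hypotheses of that abstract theorem are already in place in this section, so the proof essentially reduces to verifying them and invoking the conclusion.

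First, I would observe that $\cF_b$ finely covers $\G$ in the sense of \cite[2.8.1]{Federer69}: for every $x \in \G$ and every $\delta > 0$, the ball $\B(x,r)$ lies in $\cF_b$ and has diameter at most $\delta$ whenever $r \le \delta/2$. The identity $\diam(\B(x,r)) = 2r$, which is a consequence of the left invariance and $1$-homogeneity of $d$, is crucial here, since it forces $\zeta_{b,\alpha}$ to take values in $(0,+\infty)$ on every element of $\cF_b$ of positive radius. This is precisely the content of \eqref{eq:Smu_zetab}: no closed ball of positive radius can simultaneously annihilate or blow up both $\mu$ and $\zeta_{b,\alpha}$, so the set of admissible balls in the Federer density formalism is all of $\cF_b$.

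Second, the hypothesis that $\G$ admits a countable open cover by sets of finite $\mu$-measure provides the $\sigma$-finiteness condition required by Theorem~11 of \cite{Mag30}. Combined with Borel regularity of $\mu$ and the fineness of $\cF_b$, this ensures that the Federer density $\theta^\alpha(\mu,\cdot)$ defined via \eqref{eq:FDens} is Borel measurable on every Borel subset of $\G$, which yields the first conclusion. For the integral representation, the additional assumptions $\cS^\alpha(A) < +\infty$ and $\mu \res A \ll \cS^\alpha \res A$ are exactly those under which the cited theorem produces the differentiation identity \eqref{eq:spharea}, with the spherical measure $\cS_0^\alpha$ arising from the Carath\'eodory construction \eqref{d:S_0^alpha} associated with $\cF_b$.

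The main obstacle here is not analytic but verificational: the real work has been done once and for all in \cite{Mag30}, and what remains is a careful translation of the abstract hypotheses into the homogeneous-group setting. The only subtlety worth emphasizing is the equality $\diam(\B(x,r)) = 2r$, which is indispensable in ruling out the degenerate balls excluded in \eqref{eq:Smu_zetab}; in a general metric space, where closed balls can have diameter strictly less than twice the radius, the gauge $\zeta_{b,\alpha}$ could vanish on non-trivial balls and the reduction $\cS_{\mu,\zeta_{b,\alpha}} = \cF_b$ would fail, forcing a more delicate invocation of the abstract theorem.
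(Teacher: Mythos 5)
Your proposal is correct and matches the paper's argument exactly: the paper likewise verifies that $\cF_b$ finely covers $\G$, that $\diam(\B(x,r))=2r$ forces $\cS_{\mu,\zeta_{b,\alpha}}=\cF_b$ as in \eqref{eq:Smu_zetab}, and then invokes Theorem~11 of \cite{Mag30} for the metric space $(\G,d)$. No gaps.
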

The {\em spherical Federer density} $\theta^\alpha(\mu,\cdot)$ in \eqref{eq:spharea}
was introduced in \cite{Mag30}. We will use its explicit representation
\begin{equation}\label{eq:FDens} 
\theta^\alpha(\mu,x)=\inf_{\ep>0}\; \sup\bigg\{\frac{2^\alpha \mu(\B)}{\diam(\B)^\alpha}: x\in\B\in\cF_b,\ \diams\B<\ep\bigg\}\,.
\end{equation}

\subsection{Intrinsic measure and spherical factor}\label{sect:intrinsic_measure}
The next definition introduces the intrinsic measure associated to a submanifold
in a homogeneous group, see \cite{Mag13Vit}. For hypersurfaces in Carnot
groups this measure is precisely the h-perimeter measure with respect to the sub-Riemannian
structure of the group.
\begin{Def}[Intrinsic measure]\label{d:SRmeasure}\rm
Let $\Sigma\subset\G$ be an $\n$-dimensional submanifold of class  $C^1$ and degree $\NN$.
We consider our fixed graded left invariant Riemannian metric $g$ on $\G$. To present a coordinate free version of this measure, we fix an auxiliary Riemannian metric $\tilde g$ on $\G$.
Let $\tau_\Sigma$ be a $\tilde g$-unit tangent $\n$-vector field on $\Sigma$, namely,
\[
\|\tau_\Sigma(p)\|_{\tilde g}=1 \quad \text{for each} \quad p\in\Sigma.
\]
We consider its corresponding $\NN$-tangent $\n$-vector field, defined as follows
\beq\label{d:tauSigmaN}
\tau_{\Sigma,\NN}^{\tilde g}(p):=\pi_{p,\NN}(\tau_{\Sigma}(p))\quad\text{for each} \quad p\in\Sigma.
\eeq
Then we define the {\em intrinsic measure} of $\Sigma$ in $\G$ as follows
\beq\label{d:intmeas}
\mu_{\Sigma}=\|\tau_{\Sigma,\NN}^{\tilde g}\|_g\; \sigma_{\tilde g},
\eeq
where $\sigma_{\tilde g}$ is the $\n$-dimensional Riemannian 
measure induced by $\tilde g$ on $\Sigma$.
This can be also seen as the $\n$-dimensional Hausdorff measure 
with respect to the Riemannian distance induced by $\tilde g$
and restricted to $\Sigma$.
% The geometric constant $\omega_\n$ for this Riemannian Hausdorff measure is the same as that of the Euclidean Hausdorff measure, see Federer's book at page 281.
\ed
\br\label{r:degreeTangent}
By definition of pointwise degree \eqref{d:degree_point}, we realize that under the assumptions of Definition~\ref{d:SRmeasure} a point $p\in\Sigma$ has maximum degree $\NN$ if and only if 
\[
\tau_{\Sigma,\NN}^{\tilde g}(p)=\pi_{p,\NN}(\tau_\Sigma(p))\neq0,
\]
as it follows from the definition of pointwise $\NN$-projection, see \eqref{d:projM}.
\er
\bpr\label{pr:intmeaslocalchart}
If $H\subset\R^\n$ is an open subset and $\Phi:H\to \G$ is a $C^1$ smooth local
chart for an $\n$-dimensional $C^1$ smooth submanifold $\Sigma$ of degree $\NN$, then
\beq\label{e:intmeaslocalchart}
\mu_\Sigma\lls \Phi(H)\rrs=\int_H \|\pi_{\Phi(y),\NN}\lls \der_{y_1}\Phi(y)\wedge\cdots\wedge\der_{y_\n}\Phi(y)\rrs\|_g\,dy.
\eeq
\epr
\begin{proof}
By our local chart, using \eqref{d:tauSigmaN} we can write 
\[
\tau^{\tilde g}_{\Sigma,\NN}(\Phi(y)):=\frac{\pi_{\Phi(y),\NN}\lls \der_{y_1}\Phi(y)\wedge\cdots\wedge\der_{y_\n}\Phi(y)\rrs}{\| \der_{y_1}\Phi(y)\wedge\cdots\wedge\der_{y_\n}\Phi(y)\|_{\tilde g}},
\]
therefore the integral
\[
\int_{\Phi(H)} \|\tau^{\tilde g}_{\Sigma,\NN}(p)\|_g\, d\sigma_{\tilde g}(p),
\]
after the standard change of variables $p=\Phi(y)$, becomes equal to
\[
\int_H \left\|\frac{\pi_{\Phi(y),\NN}\lls \der_{y_1}\Phi(y)\wedge\cdots\wedge\der_{y_\n}\Phi(y)\rrs}{\| \der_{y_1}\Phi(y)\wedge\cdots\wedge\der_{y_\n}\Phi(y)\|_{\tilde g}}\right\|_g
\| \der_{y_1}\Phi(y)\wedge\cdots\wedge\der_{y_\n}\Phi(y)\|_{\tilde g}\, dy,
\]
therefore concluding the proof of \eqref{e:intmeaslocalchart}.
\end{proof}
The relationship between intrinsic meausure and spherical measure requires 
some geometric constants that can be associated to the homogeneous distance
that defines the spherical measure. These constants may change, depending on the
sections we consider of the metric unit ball.
%
%
%           SPHERICAL FACTOR
%
%
\begin{Def}[Spherical factor]\label{d:sphericalfactor}\rm
Let $S\subset\G$ a linear subspace and consider a fixed homogeneous 
distance $d$ on $\G$. If $|\cdot|$ denotes our fixed graded scalar product on $\G$,
then the {\em spherical factor} of $d$, with respect to $S$, is the number
\[
\beta_d(S)=\max_{d(u,0)\le1} \cH^\n_{|\cdot|}\lls\B(u,1)\cap S\rrs,
\]
where $\B(u,1)=\set{v\in\G: d(v,u)\le1}$.
\ed
\section{Proof of the upper blow-up theorem}
This section is devoted to the proof of the upper blow-up for the intrinsic measure of submanifolds (Theorem~\ref{t:UpBlwC}). 

\begin{proof}[Proof of Theorem~\ref{t:UpBlwC}]
We consider the special coordinates obtained in Theorem~\ref{t:specialcoord}
for the translated manifold $\Sigma_p=p^{-1}\Sigma$.
This assumption is possible by Proposition~\ref{pr:translation},
since algebraic regularity along with the first and the fourth assumptions
are automatically transferred to the origin of $\Sigma_p$.
We then follow notations of Theorem~\ref{t:LocExpSurf}.
In some parts of the proof the identification of $\G$ with $\R^\n$ with 
respect to the above mentioned coordinates will be understood.
For instance, the algebraic tangent space
$A_0\Sigma_p$ defined in \eqref{eq:homtansp}
equals $A_p\Sigma$ by Proposition~\ref{pr:translation}
and it can be also identified with $\R^\n$.

Let $\Psi$ be defined as in the proof of Theorem~\ref{t:LocExpSurf} and define the translated mapping $\Phi:(-c_1,c_1)^p\to\Sigma$
as follows
\beq\label{d:PhiPsi}
\Phi(y)=p\Psi(y).
\eeq
We are going to use the local expansion \eqref{eq:estim} in order to compute the Federer's density,
that is defined as follows
\beq\label{d:NFedDens}
\theta^\NN(\mu_\Sigma,p)=\inf_{r>0}\sup_{\substack{z\in \B(p,\tilde r) \\ 0<\tilde r<r}}\frac{\mu_\Sigma(\B(z,\tilde r))}{\tilde r^\NN}.
\eeq
%In fact, we are going to apply Theorem~11 of \cite{Mag30} to the Borel regular measure $\tilde\mu\res\Sigma$  for the relationship between this density and the  $\NN$-dimensional spherical measure of $\Sigma$.
Taking $r>0$ sufficiently small and $z\in B(p,\tilde r)$, in view of \eqref{e:intmeaslocalchart}, we have
\begin{equation}
\frac{\mu_\Sigma(\B(z,\tilde r))}{\tilde r^\NN}=\tilde r^{-\NN}
\int_{\Phi^{-1}(\B(z,\tilde r))}\|\pi_{\Phi(y),\NN}\lls \der_{y_1}\Phi(y)\wedge\cdots\wedge\der_{y_\n}\Phi(y)\rrs\|_g\,dy .
\end{equation}
Taking into account the relations
\[
\NN=\sum_{i=1}^\n b_i=\sum_{j=1}^\iota  j\,\alpha_j,
\]
and the ``induced dilations'' $\sigma_r$ introduced in \eqref{d:dilintrinsic}, the change of 
variable $y=\sigma_r(t)$ implies that
\beq\label{e:rescale_mu_tilde}
\frac{\mu_\Sigma(\B(z,\tilde r))}{\tilde r^\NN}=
\int_{\sigma_{1/\tilde r}(\Phi^{-1}(\B(z,\tilde r)))}
\|\pi_{\Phi(y),\NN}\lls \der_{y_1}\Phi(\sigma_{\tilde r}y)\wedge\cdots\wedge\der_{y_\n}\Phi(\sigma_{\tilde r}y)\rrs\|_g \,dy \,.
\eeq
Our first claim is the uniform boundedness of the following rescaled sets
\[
\sigma_{1/\tilde r}\big(\Phi^{-1}(\B(z,\tilde r))\big)=
\sigma_{1/\tilde r}\big(\Psi^{-1}(\B(p^{-1}z,\tilde r))\big)
\]
as $\tilde r<r$ and $d(p,z)\le\tilde r$ with $\tilde r$ sufficiently small. There holds
\begin{equation}\label{e:rescaledSet}
\sigma_{1/\tilde r}\big(\Phi^{-1}(\B(z,\tilde r))\big)=\left\{y\in\R^\n:
\delta_{1/\tilde r}(z^{-1}p)\delta_{1/\tilde r}(\Psi(\sigma_{\tilde r}y))\in\mathbb B(0,1)\right\}.
\end{equation}
We first observe that
\[
\zeta(\tau)=
\bigg(\sgn(\tau_1)\sqrt[b_1]{b_1|\tau_1|},\ldots,\sgn(\tau_p)\sqrt[b_p]{b_p|\tau_p|}\bigg)
\]
is the inverse of $\eta$, hence in view of \eqref{d:Gammaeta} and \eqref{d:defPsi} we have 
\beq\label{e:psirtilde}
\psi(\sigma_{\tilde r}y)=\Gamma(\zeta(\sigma_{\tilde r}y))=\Gamma(\tilde r\,\zeta(y))\,.
\eeq
In view of \eqref{e:bsmj-1}, we can write \eqref{eq:estim} as follows
\beq\label{e:estimVar}
\Gamma_s(t)=\left\{\begin{array}{ll} 
\eta_{s-\m_{d_s-1}+\mu_{d_s-1}}(t)  & \mbox{if $s\in I$} \\
o(|t|^{d_s}) & \mbox{if $s\notin I$}
\end{array}\right.,
\eeq
therefore whenever $s\in I$ we get
\beq\label{e:Gammasrtilde}
\begin{split}
\Gamma_s(\zeta(\sigma_{\tilde r}y))&=(\eta\circ \zeta)_{s-\m_{d_s-1}+\mu_{d_s-1}}(\sigma_{\tilde r}y) \\
&=(\sigma_{\tilde r}y)_{s-\m_{d_s-1}+\mu_{d_s-1}}\\
&=(\tilde r)^{b_{s-\m_{d_s-1}+\mu_{d_s-1}}}y_{s-\m_{d_s-1}+\mu_{d_s-1}}\\
&=(\tilde r)^{d_s}y_{s-\m_{d_s-1}+\mu_{d_s-1}}.
\end{split}
\eeq
As a result, taking into account that $d\pa{\delta_{1/\tilde r}(z^{-1}p),0}\le1$, an element $y\in\R^\n$ of \eqref{e:rescaledSet} satisfies the condition
\[
\begin{split}
&y_1e_1+\cdots +y_{\alpha_1}e_{\alpha_1}+
\frac{\Gamma_{\alpha_1+1}(r\zeta(y))}{\tilde r}e_{\alpha_1+1}+\cdots
+\frac{\Gamma_{\m_1}(\tilde r\zeta(y))}{\tilde r} e_{\m_1}\\
&+y_{\alpha_1+1}e_{\m_1+1}+\cdots +y_{\mu_2}e_{\m_1+\alpha_2}+
\frac{\Gamma_{\m_1+\alpha_2+1}(\tilde r\zeta(y))}{(\tilde r)^2}e_{\m_1+\alpha_2+1}
+\cdots+\frac{\Gamma_{\m_2}(\tilde r\zeta(y))}{(\tilde r)^2} e_{\m_2}\\
&\;\,\vdots\qquad\qquad\quad \vdots\qquad\qquad\quad \vdots\qquad\qquad\quad \vdots\qquad\qquad\quad \vdots\qquad\qquad\quad \vdots\qquad\qquad\quad \vdots \\
&+y_{\mu_{\iota-1}+1}e_{\m_{\iota-1}+1}+\cdots +y_\n e_{\m_{\iota-1}+\alpha_\iota}+\frac{\Gamma_{\m_{\iota-1}
+\alpha_{\iota-1}+1}(\tilde r\zeta(y))}{(\tilde r)^\iota}
e_{\m_{\iota-1}+\alpha_\iota+1}+\cdots\\
&\cdots +\frac{\Gamma_{\m_\iota}(\tilde r\zeta(y))}{(\tilde r)^\iota} e_{\m_\iota} \in\B(0,2).
\end{split}
\]
Since $\B(0,2)$ is also bounded with respect to the fixed Euclidean norm on $\G$ and 
$(e_1,\ldots,e_\q)$ is an orthonormal basis the previous expression implies 
the existence of a bounded set $V\subset A_p\Sigma$ such that 
\beq\label{e:unifBoundedness}
\sigma_{1/\tilde r}\big(\Phi^{-1}(\B(z,\tilde r))\big)\subset V
\eeq
for $r>0$ sufficiently small, $0<\tilde r<r$ and $d(z,p)\le \tilde r$.
We notice that the previous sums can be also written as follows
\[
\sum_{l=1}^\n y_l e_{\m_{b_l-1}+l-\mu_{b_l-1}}+\sum_{l\notin I} \frac{\Gamma_l(\tilde r\zeta(y))}{(\tilde r)^{d_l}} e_l\in\B(0,2).
\]
The uniform boundedness \eqref{e:unifBoundedness} joined with \eqref{e:rescale_mu_tilde} implies that 
$\theta^\NN(\tilde\mu\res\Sigma,p)<+\infty$, hence there exist a sequence $\{r_k\}\subset(0,+\infty)$ converging to zero and a sequence of elements $z_k\in\B(p,r_k)$ such that
\[
\theta^\NN(\mu_\Sigma,p)=\lim_{k\to\infty} \int_{\sigma_{1/r_k}(\Phi^{-1}(\B(z_k,r_k)))}
\|\pi_{\Phi(y),\NN}\lls \der_{y_1}\Phi(\sigma_{r_k}y)\wedge\cdots\wedge\der_{y_\n}\Phi(\sigma_{r_k}y)\rrs\|_g \,dy.
\]
Possibly extracting a subsequence, there exists $u_0\in\B(0,1)$ such that
\begin{equation}\label{e:deltak}
\delta_{1/r_k}(z_k^{-1}p)\to u_0^{-1}\in\B(0,1).
\end{equation}
We define the following subsets of the algebraic tangent space
\[
F_k=\sigma_{1/r_k}\big(\Phi^{-1}(\B(z_k,r_k))\big)\qandq F(u_0)=\B(u_0,1)\cap A_p\Sigma.
\]
Our second claim is the validity of the following limit
\begin{equation}\label{e:F_k}
 \lim_{k\to\infty}{\bf 1}_{F_k}(w)=0
\end{equation}
for each $w\in A_p\Sigma\sm F(u_0)$.
Arguing by contradiction, if there exists a sequence of positive integers $j_k$ such that 
\[
{\bf 1}_{F_{j_k}}(w)=1
\] 
for every $k\in\N$, then \eqref{e:rescaledSet} gives
\beq
\sum_{l=1}^\n w_l e_{\m_{b_l-1}+l-\mu_{b_l-1}}+\sum_{l\notin I} \frac{\Gamma_l(r_{j_k}\zeta(w))}{(r_{j_k})^{d_l}} e_l\in\delta_{1/r_{j_k}}(p^{-1}z_{j_k})\B(0,1),
\eeq
since the previous element precisely coincides with
$\delta_{1/r_{j_k}}(\Psi(\sigma_{r_{j_k}}w))$.
The estimate \eqref{e:estimVar} joined with the limit \eqref{e:deltak}, as $k\to\infty$ give
\[
\sum_{l=1}^\n w_l e_{\m_{b_l-1}+l-\mu_{b_l-1}}\in \B(u_0,1)\cap A_p\Sigma=F(u_0),
 \]
 that is a contradiction.
We now define 
\beq
I_k=\int_{F_k}\|\pi_{\Phi(y),\NN}\lls \der_{y_1}\Phi(\sigma_{r_k}y)\wedge\cdots\wedge\der_{y_\n}\Phi(\sigma_{r_k}y)\rrs\|_g\,dy,
\eeq
along with 
\beq\label{d:Jik}
\begin{split}
J_{1,k}&=\int_{F_k\cap F(u_0)}\|\pi_{\Phi(y),\NN}\lls \der_{y_1}\Phi(\sigma_{r_k}y)\wedge\cdots\wedge\der_{y_\n}\Phi(\sigma_{r_k}y)\rrs\|_g\,dy, \\
J_{2,k}&=\int_{F_k\sm F(u_0)}\|\pi_{\Phi(y),\NN}\lls \der_{y_1}\Phi(\sigma_{r_k}y)\wedge\cdots\wedge\der_{y_\n}\Phi(\sigma_{r_k}y)\rrs\|_g\,dy,
\end{split}
\eeq
so that $I_k=J_{1,k}+J_{2,k}$ for each $k\ge0$. Taking the limit of the following inequality
\begin{equation}\label{eq:I_kle}
J_{1,k}\le \int_{F(u_0)}\|\pi_{\Phi(y),\NN}\lls \der_{y_1}\Phi(\sigma_{r_k}y)\wedge\cdots\wedge\der_{y_\n}\Phi(\sigma_{r_k}y)\rrs\|_g\,dy,
\end{equation}
we obtain 
\beq\label{e:ineqJ1k}
\limsup_{k\to\infty} J_{1,k}\le {\mathcal H}^\n_{|\cdot|}(F(u_0))\,
\|\pi_{\Phi(y),\NN}\lls \der_{y_1}\Phi(0)\wedge\cdots\wedge\der_{y_\n}\Phi(0)\rrs\|_g.
\eeq
Joining \eqref{d:Jik} with \eqref{e:unifBoundedness}, we also get
\beq
J_{2,k}\le\int_{V\sm F(u_0)}{\bf 1}_{F_k}(y)\|\pi_{\Phi(y),\NN}\lls \der_{y_1}\Phi(\sigma_{r_k}y)\wedge\cdots\wedge\der_{y_\n}\Phi(\sigma_{r_k}y)\rrs\|_g\,dy.
\eeq
The boundedness of $V$ and \eqref{e:F_k} joined with the classical Lebesgue's convergence theorem imply that
\beq\label{e:limJ2k}
\lim_{k\to\infty}J_{2,k}=0.
\eeq
In view of \eqref{e:ineqJ1k} and \eqref{e:limJ2k}, we have proved that
\beq
\theta^\NN(\mu_\Sigma,p)\le 
{\mathcal H}^\n_{|\cdot|}\lls\B(u_0,1)\cap A_p\Sigma\rrs\,\|\pi_{\Phi(y),\NN}\lls \der_{y_1}\Phi(0)\wedge\cdots\wedge\der_{y_\n}\Phi(0)\rrs\|_g,
\eeq
where $u_0\in\B(0,1)$, therefore the definition of spherical factor yields
\beq\label{e:ineqthetaN}
\theta^\NN(\mu_\Sigma,p)\le \beta_d(A_p\Sigma)\,\|\pi_{\Phi(y),\NN}\lls \der_{y_1}\Phi(0)\wedge\cdots\wedge\der_{y_\n}\Phi(0)\rrs\|_g.
\eeq 
Our third claim is the validity of the equality in \eqref{e:ineqthetaN}.
Let $v_0\in\B(0,1)$ be such that 
\beq\label{e:betav_0}
\beta_d(A_p\Sigma)=\cH^\n_{|\cdot|}\lls\B(v_0,1)\cap A_p\Sigma\rrs,
\eeq
define $v_{\tilde r}=p\delta_{\tilde r}v_0\in\B(p,\tilde r)$ for $\tilde r>0$ and fix $\lambda>1$. We observe that
\[
\sup_{0<\tilde r<r}\frac{\mu_\Sigma\lls\B(v_{\tilde r},\lambda \tilde r)\rrs}
{(\lambda \tilde r)^\NN}
\le\sup_{\substack{u\in\B(p,r')\\ 0<r'<\lambda r}}\frac{\mu_\Sigma\lls\B(u,r')\rrs}{({r'})^\NN}
\]
for each $r>0$ sufficiently small. From the definition of spherical Federer density \eqref{d:NFedDens},
it follows that
\beq\label{e:ineqmutilde}
\limsup_{\tilde r\to0^+}\frac{\mu_\Sigma\lls\B(v_{\tilde r},\lambda \tilde r)\rrs}{(\lambda \tilde r)^\NN}\le\theta^\NN(\mu_\Sigma,p)\,.
\eeq
We wish to write a formula for $\mu_\Sigma\lls\B(v_{\tilde r},\lambda \tilde r)\rrs$, therefore we consider \eqref{e:rescale_mu_tilde} and apply \eqref{e:rescaledSet}, replacing $\tilde r$ with $\lambda \tilde r$ and $z$ with $v_{\tilde r}$.
It follows that the set
\begin{equation}
E_{\tilde r}=\delta_{1/(\lambda \tilde r)}\lls\Phi^{-1}(\B(v_{\tilde r},\lambda \tilde r))\rrs
=\Big\{y\in\R^\n: \delta_{1/\tilde r}(\Psi(\sigma_{\lambda\tilde r}y))\in\mathbb B(v_0,\lambda)\Big\}
\end{equation}
gives the equality
\beq
\frac{\mu_\Sigma\lls\B(v_{\tilde r},\lambda \tilde r)\rrs}{(\lambda \tilde r)^\NN}
=\int_{E_{\tilde r}}\|\pi_{\Phi(y),\NN}\lls \der_{y_1}\Phi(\sigma_{\lambda \tilde r}y)\wedge\cdots\wedge\der_{y_\n}\Phi(\sigma_{\lambda\tilde r}y)\rrs\|_g\,dy \,.
\eeq
Setting $\tilde E_{\tilde r}=\sigma_\lambda(E_{\tilde r})$ and performing the change of variables $y=\sigma_{1/\lambda}\tilde y$, we get
\beq\label{e:mutildeintegral}
\frac{\mu_\Sigma\lls\B(v_{\tilde r},\lambda \tilde r)\rrs}{(\lambda \tilde r)^\NN}
=\frac{1}{\lambda^\NN}\int_{\tilde E_{\tilde r}}
\|\pi_{\Phi(y),\NN}\lls \der_{y_1}\Phi(\sigma_{\tilde r}\tilde y)\wedge\cdots\wedge\der_{y_\n}\Phi(\sigma_{\tilde r}\tilde y)\rrs\|_g\,d\tilde y \,,
\eeq
where we have defined 
\[
\tilde E_{\tilde r}=
\Big\{y\in\R^\n: \delta_{1/\tilde r}(\Psi(\sigma_{\tilde r}y))\in\mathbb B(v_0,\lambda)\Big\}.
\]
Now, we fix $1<\tilde \lambda<\lambda$, the subset
\beq\label{e:Hrtilde}
H_{\tilde r}=
\Big\{y\in\R^\n: \delta_{1/\tilde r}(\Psi(\sigma_{\tilde r}y))\in B(v_0,\tilde\lambda)\Big\}
\eeq
and observe that \eqref{e:psirtilde}, \eqref{e:estimVar} and \eqref{e:Gammasrtilde}, in view of $\Psi(y)=\sum_{j=1}^\q\psi_j(y) e_j$, show that
\beq
\delta_{1/\tilde r}(\Psi(\sigma_{\tilde r}y))=\sum_{l=1}^\n y_l e_{\m_{b_l-1}+l-\mu_{b_l-1}}+\sum_{l\notin I} \frac{\Gamma_l(\tilde r\zeta(y))}{(\tilde r)^{d_l}} e_l
\eeq
for each $y\in\R^\n$ converges to 
\beq
\sum_{l=1}^\n y_l e_{\m_{b_l-1}+l-\mu_{b_l-1}}\in A_p\Sigma \quad \text{as} \quad \tilde r\to 0^+.
\eeq
As a result, for any $y\in B(v_0,\tilde \lambda)\cap A_p\Sigma$ there holds
\[
\lim_{\tilde r\to0^+} {\bf 1}_{H_{\tilde r}\cap B(v_0,\tilde\lambda)}(y)=1.
\]
Thus, taking into account that \eqref{e:ineqmutilde}, \eqref{e:mutildeintegral}, \eqref{e:Hrtilde}, the following limit superior
\[
\limsup_{\tilde r\to0^+}
\frac{1}{\lambda^\NN}\int_{B(v_0,\tilde \lambda)\cap A_p\Sigma}
{\bf 1}_{H_{\tilde r}\cap B(v_0,\tilde\lambda)}(\tilde y)
\|\pi_{\Phi(y),\NN}\lls \der_{y_1}\Phi(\sigma_{\tilde r}\tilde y)\wedge\cdots\wedge\der_{y_\n}\Phi(\sigma_{\tilde r}\tilde y)\rrs\|_g\,d\tilde y \
\]
is not greater than $\theta^\NN(\mu_\Sigma,p)$.
Then Lebesgue's convergence theorem gives
\[
\frac{1}{\lambda^\NN}\cH_{|\cdot|}^\n(B(v_0,\tilde\lambda)\cap A_p\Sigma)\,
\|\pi_{\Phi(y),\NN}\lls \der_{y_1}\Phi(0)\wedge\cdots\wedge\der_{y_\n}\Phi(0)\rrs\|_g\le 
\theta^\NN(\mu_\Sigma,p).
\]
Letting first $\tilde \lambda\to1^+$ and then $\lambda\to1^+$, due to 
\eqref{e:betav_0}, we get
\beq
\beta_d(A_p\Sigma)\,
\|\pi_{\Phi(y),\NN}\lls \der_{y_1}\Phi(0)\wedge\cdots\wedge\der_{y_\n}\Phi(0)\rrs\|_g\le
\theta^\NN(\mu_\Sigma,p).
\eeq
Joining this inequality with \eqref{e:ineqthetaN} we get
a formula for the Federer density
\beq
\theta^\NN(\mu_\Sigma,p)=\beta_d(A_p\Sigma)\,
\|\pi_{\Phi(y),\NN}\lls \der_{y_1}\Phi(0)\wedge\cdots\wedge\der_{y_\n}\Phi(0)\rrs\|_g.
\eeq
Finally, by \eqref{e:matriceC} and \eqref{d:PhiPsi} we observe that
\[
\begin{split}
\pi_{\Phi(y),\NN}\lls \der_{y_1}\Phi(0)\wedge\cdots\wedge\der_{y_\n}\Phi(0)\rrs=&X_1\wedge\cdots\wedge X_{\alpha_1}\wedge X_{\m_1+1}\wedge\cdots \wedge X_{\m_1+\alpha_2}\wedge\cdots \\
&\cdots\wedge X_{\m_{\iota-1}+1}\wedge\cdots\wedge X_{\m_{\iota-1}+\alpha_\iota},
\end{split}
\]
which has unit norm with respect to $\|\cdot\|_g$. This completes our proof.
\end{proof}

\section{Sections of convex balls and spherical factor}\label{sect:sectConvex} 

In this section, we are concerned with finding formulas that make 
the spherical factor easier to compute. 
We consider homogeneous distances whose metric unit ball is convex.
\bt\label{t:ConvSect_n-dim}
Let $H$ be a $\q$-dimensional Hilbert space with $\q\ge2$ and let 
$C$ be a compact and convex set, whose interior is nonempty and it contains the origin.
Let $S$ denote an $\n$-dimensional subspace of $H$ and consider $V=S^\bot$ its orthogonal subspace. Then the subset $D=\{v\in V: C\cap (v+S)\neq\emptyset\}$
is convex and $\psi:D\to[0,+\infty)$, defined as follows
\[
\psi(v)=\big[\cH_{|\cdot|}^\n\lls C\cap (v+S)\rrs\big]^{1/\n}
\]
is concave on $D$.
\et
\begin{proof}
It is easy to observe that $D$ is convex and with nonempty interior in $V$.
Let us consider $v,w\in D$ and $\theta\in(0,1)$. Convexity of $C$ gives
\[
\theta \ls (v+S)\cap C\rs+(1-\theta)\ls (w+S)\cap C \rs\subset 
\ls [\theta v+(1-\theta) w]+S\rs\cap C\,,
\]
that it can be rewritten as follows
\beq\label{e:inclCgammau}
\ls (\theta v+S)\cap \theta C\rs+\ls \ls(1-\theta) w+S\rs \rs\cap (1-\theta)C \rs\subset \ls [\theta v+(1-\theta)w]+S\rs\cap C\,.
\eeq
Our point is to use the the classical Brunn-Minkowski inequality, in dimension $\n$. We observe 
the following equality of sets
\[
 (\gamma u+S)\cap \gamma C=\gamma u+\ls S\cap (\gamma C- \gamma u)\rs
\]
for every $\gamma\in\R$ and $u\in H$, hence we define 
\beq 
C(\gamma,u)=S\cap \ls \gamma  C-\gamma u\rs.
\eeq
Thus, the inclusion \eqref{e:inclCgammau} can be written as follows
\[
\big[\theta v+C(\theta,v)\big]+\big[(1-\theta)w+C\lls(1-\theta),w\rrs\big]\subset \lls [\theta v+(1-\theta)w]+S\rrs\cap C\,.
\]
It follows that
\begin{equation}\label{e:convexityIncl}
\theta v+(1-\theta)w+C(\theta,v)+C\ls(1-\theta),w\rs\subset 
\lls [\theta v+(1-\theta)w]+S\rrs\cap C\,.
\end{equation}
The Brunn-Minkowski inequality in $S$ gives
\beq\label{e:BrunnMinkowski}
\big[ \cH_{|\cdot|}^\n\lls C(\theta,v)+C\ls(1-\theta),w)\rrs\big]^{1/\n}\geq
\big[ \cH_{|\cdot|}^\n\lls C(\theta,v)\rrs\big]^{1/\n}
+\big[ \cH_{|\cdot|}^\n\lls C\lls(1-\theta),w\rrs\rrs\big]^{1/\n}.
\eeq
Taking into account that
\[
 C(\theta,u)=\theta\, C(1,u) \quad \mbox{and}\quad   C\lls(1-\theta),u\rrs=(1-\theta)\, C(1,u),
\]
the inequality \eqref{e:BrunnMinkowski} joined with the inclusion \eqref{e:convexityIncl}, we obtain
\[
 \psi\ls \theta v+(1-\theta)w\rs\ge  \theta\, \big[\cH_{|\cdot|}^\n\ls C(1,v)\rs\big]^{1/\n}+(1-\theta)\,\big[ \cH_{|\cdot|}^\n\ls C(1,w)\rs\big]^{1/\n}\,.
\]
Finally, we observe that
\[
\psi(u)=\big[\cH_{|\cdot|}^\n\ls C(1,u)\rs\big]^{1/\n}
\]
for each $u\in D$, hence completing the proof.
\end{proof}
The next lemma provides a factorization of a homogeneous group $\G$
with respect to a vertical subgroup $N$, although a {\em complementary subgroup} $V$ such that $V\oplus N=\G$ may not exist. We will consider $V$ as simple homogeneous subspace of $\G$.
\bl\label{l:hom_splitting}
If $\G$ is a homogeneous group, $V\subset\G$ is a homogeneous subspace
and $N\subset\G$ is a vertical subgroup such that $V\oplus N=\G$, then the mapping
\[
V\times N\to \G,\quad (v,h)\to vh
\]
is an analytic diffeomorphism. Furthermore, its inverse mapping 
$T:\G\to V\times N$ is defined by 
\[
T(x)=\pa{P_V(x),\Pi_N(x)}
\]
where $P_V:\G\to V$ is the linear projection onto $V$ with
respect to the direct sum $\G=V\oplus N$ and
$\Pi_N(x)=P_V(x)^{-1}x$.
\el
\begin{proof}
The mapping $F(v,h)=vh$ has the property that
\[
\der_v F(0,0)|_{V}=\Id_V\qandq \der_h F(0,0)|_{N}=\Id_N
\]
so the assumption that $V\oplus N=\G$ implies that $dF(0,0)$
is invertible and $F$ is locally invertible around the origin.
The homogeneity of $F$, i.e.
\[
F(\delta_rv, \delta_r h)=\delta_r F(h,v)
\]
shows that $F$ is surjective. We now consider 
\[
vh=wk,
\]
where $v,w\in V$ and $h,k\in N$. 
By the BCH formula \eqref{eq:BCH}, we have
\beq\label{eq:vwN}
v+h+\sum_{j=2}^\iota c_j(v,h)=w+k+\sum_{j=2}^\iota c_j(w,k)
\eeq
where the fact that $N$ is vertical gives
\[
h+\sum_{j=2}^\iota c_j(v,h)\in N \qandq  k+\sum_{j=2}^\iota c_j(w,k)\in N.
\]
Applying $P_V$ to the equality \eqref{eq:vwN}, it follows that 
\[
v=P_V(vh)=P_V(wk)=w.
\]
We have shown that any element $x\in\G$ can be uniquely written as the product
\[
P_V(x)\Pi_N(x),
\]
therefore concluding the proof.
\end{proof}
\bl
If $\G$ is a homogeneous group, $V\subset\G$ is a homogeneous subspace
and $N\subset\G$ is a vertical subgroup such that $V\oplus N=\G$, then 
for every $v\in V$ we have
\beq\label{eq:v+N=vN}
v+N=vN.
\eeq
\el
\begin{proof}
From \eqref{eq:BCH} and the fact that $N$ is an ideal with respect
to the Lie algebra structure of $\G$, there holds
\[
vn=v+n+\sum_{j=2}^\iota c_j(v,n)\in h+N
\]
being $c_j(v,n)\in N$ for every $j=2,\ldots,\iota$. This shows that
$vN\subset v+N$. Conversely, considering $v+n\in\G$ and applying
Lemma~\ref{l:hom_splitting} we get
\[
v+n=P_V(v+n)\Pi_N(v+n)=v\Pi_N(v+n)=v\tilde n
\]
where $\tilde n=\Pi_N(v+n)\in N$.  We have then established the opposite
inclusion.
\end{proof}
An important feature of vertical subgroups is the following
left invariance property of the Euclidean Hausdorff measure.
\bl\label{l:vertical_meas_translation}
If $\G$ is a homogeneous group and $N\subset\G$ is an $\n$-dimensional vertical subgroup, then for every $p\in\G$ and every measurable set $A\subset N$, we have
\[
\cH_{|\cdot|}^\n(A)=\cH^\n_{|\cdot|}(l_p(A)),
\]
where $l_p:\G\to\G$ denotes the left translation by $p$.
\el
\begin{proof}
We consider a graded basis $(e_1,\ldots,e_\q)$, such that
\[
V=\spn\set{e_1,\ldots,e_{\q-\n}} \qandq N=\spn\set{e_{\q-\n+1},\ldots,e_\q}
\]
and the associated graded coordinates in $\G$, setting
\[
p=\sum_{j=1}^\q x_j e_j\qandq n=\sum_{j=1}^\n \zeta_j e_{\q-\n+j}\in N.
\]
We express the left translation explicitly
\beq\label{eq:translation_left_n}
l_pn=\sum_{j=1}^{\q-\n} x_j e_j+\sum_{j=\q-\n+1}^\q(x_j+\zeta_{j-\q+\n}) e_j+\sum_{j=\m+1}^\q c_j(\bar x^{d_j-1},\bar \zeta^{d_j-1}) e_j.
\eeq
According to the Baker-Campbell-Hausdorff formula, the functions $c_j$
are polynomials that only depends on variables of degree less than $d_j$.
We have defined
\beq\label{eq:barxn}
\bar x^{d_j-1}=\sum_{d_i\le d_j-1} x_i \tilde b_i\qandq 
\bar \zeta^{d_j-1}=\sum_{\substack{d_{\q-\n+i}\le d_j-1\\ \q-\n+1\le i\le\q}} \zeta_i \tilde b_{\q-\n+i},
\eeq
where $(\tilde b_1,\ldots,\tilde b_\q)$ is the canonical basis of $\R^\q$.
Let us remark that 
$\bar \zeta^{d_j-1}=0$ for $d_j\le d_{\q-\n+1}$.
We decompose $p$ into the sum
\[
p=v+w, \quad \text{where} \quad v=\sum_{j=1}^{\q-\n} x_j e_j\qandq
w=\sum_{j=\q-\n+1}^\q x_j e_j.
\]
With this notation formula \eqref{eq:v+N=vN} gives $pN=v+N$.
Since $v\notin N$, this decomposition improves \eqref{eq:translation_left_n}, giving
\[
l_pn=\sum_{j=1}^{\q-\n} x_j e_j+\sum_{j=\q-\n+1}^\q(x_j+\zeta_j) e_j
+\sum_{j=k_\n}^\q c_j(\bar x^{d_j-1},\bar \zeta^{d_j-1}) e_j,
\]
with $k_\n=\max\set{\q-\n+1,\m+1}$.
We now consider the projection
\[
T:v+N\to\R^\n,\quad 
v+\sum_{j=\q-\n+1}^\q \zeta_j e_j \lra \sum_{j=\q-\n+1}^\q \zeta_j b_{j-\q+\n},
\]
where $(b_1,\ldots,b_\n)$ is the canonical basis of $\R^\n$ 
and $J:\R^\n\to N$, defined as 
\[
J(\zeta_1,\ldots,\zeta_\n)= \sum_{j=1}^\n\zeta_je_{\q-\n+j}.
\]
The composition $F:\R^\n\to\R^\n$ defined 
as $F=T\circ l_p\circ J$ can be written as follows
\[
F(\zeta)=\sum_{j=1}^\n(x_{\q-\n+j}+\zeta_j)\, b_j+
\sum_{j=k_\n-\q+\n}^\n c_{\q-\n+j}(\bar x^{d_{\q-\n+j}-1},\bar \zeta^{d_{\q-\n+j}-1}) b_j.
\]
As a consequence of the previous formulae, for every $j,l=1,\ldots,\n$ 
we get
\[
\dpar{F_j}{\zeta_l}=\delta^j_l+\dpar{c_{\q-\n+j}(\bar x^{\q-\n+j},\cdot)}{\zeta_l}.
\]
In the special case $l\ge j$, due to \eqref{eq:barxn} the function $\zeta\to c_{\q-\n+j}(\bar x^{\q-\n+j},\zeta)$ only depends
on $\zeta_i$ with $d_{\q-\n+i}\le d_{\q-\n+j}-1$, therefore
$i<j\le l$. We have proved that
\[
\dpar{F_j}{\zeta_l}=\delta^j_i \quad \text{whenever $l\ge j$},
\]
hence the Jacobian of $F$ is one. Since both $T$ and 
$J$ are isometries one easily observes that image measures satisfy
\[
T_\sharp\cH^\n_{|\cdot|}=\cL^\n\qandq J_\sharp\cL^\n=\cH^\n_{|\cdot|}.
\]
Since $F$ preserves the Lebesgue measure $\cL^\n$, the following equalities conclude the proof, that is
\[
\begin{split}
\cH^\n_{|\cdot|}\pa{l_p(A)}&=T_\sharp\cH^\n_{|\cdot|}\pa{F\circ J^{-1}(A)}=\cL^n\pa{F(J^{-1}(A))}\\
&=\cL^n\pa{J^{-1}(A)}=J_\sharp\cL^\n(A)=\cH^\n_{|\cdot|}(A). \qedhere
\end{split}
\]
\end{proof}
\begin{proof}[Proof of Theorem~\ref{t:homConvBall}]
According to Definition~\ref{d:vertical_subgroup}, we set
\[
N=N_\ell\oplus H^{\ell+1}\oplus\cdots\oplus H^\iota,
\]
for some $\ell\in\set{1,2,\ldots,\iota}$. Applying Lemma~\ref{l:vertical_meas_translation}, we get
\beq\label{eq:l_pMeasPreserving}
\cH_{|\cdot|}^\n\ls N\cap \B(z,1)\rs =\cH_{|\cdot|}^\n\ls \B(0,1)\cap z^{-1}N\rs\,.
\eeq
To study the previous function with respect to $z$, we define
\[
 a(z)=\cH_{|\cdot|}^\n\ls \B(0,1)\cap zN\rs\,.
\]
We set $V=N^\bot$, that can be written as follows
\[
V=H^1\oplus\cdots\oplus H^{\ell-1}+S_\ell.
\]
This is a homogeneous subspace of $\G$ need not be a subgroup,
however Lemma~\ref{l:hom_splitting} gives the mappings
\[
P_V:\G\to V\qandq \Pi_N:\G\to N,
\]
such that for $y\in\G$ we have 
\[
y=P_V(y)\Pi_N(y),
\]
where $P_V$ is the projection onto $V$ with respect to the 
direct sum $\G=V\oplus N$. As a consequence, we can write
\begin{equation}\label{eq:az}
 a(z)=\cH_{|\cdot|}^{\n-1}\ls \B(0,1)\cap zN\rs=\cH_{|\cdot|}^{\n-1}\ls \B(0,1)\cap P_V(z)N\rs\,.
\end{equation}
We are in the assumptions to apply \eqref{eq:v+N=vN}, hence $P_V(z)N=P_V(z)+N$.
Our special representation of $\G$ also allows us to have  
\beq\label{eq:B01-1}
\B(0,1)^{-1}=-\B(0,1).
\eeq
It follows that 
\[
  a(z)=\cH_{|\cdot|}^{\n-1}\ls \B(0,1)\cap \ls P_V(z)+N\rs\rs
  =\cH_{|\cdot|}^{\n-1}\ls \B(0,1)\cap\ls-P_V(z)+N\rs\rs
\]
and the property $-P_V(z)=P_V(z^{-1})$ yields
\[
  a(z)=\cH_{|\cdot|}^{\n-1}\ls \B(0,1)\cap \ls P_V(z^{-1})+N\rs\rs
  =\cH_{|\cdot|}^{\n-1}\ls \B(0,1)\cap\ls P_V(z^{-1})N\rs\rs.
\]
Thus, by \eqref{eq:az} we get 
$a(z)=\cH_{|\cdot|}^{\n-1}\ls \B(0,1)\cap\ls z^{-1}N\rs\rs=a(z^{-1})=a(-z)$, hence $a$ is even and for every $t\in\R$ we may define the even function
\[
b(t)=\Big[\cH_{|\cdot|}^{\n-1}\Big( \B(0,1)\cap\ls tv+N\rs\Big)\Big]^{1/(\n-1)}.
\]
Due to Theorem~\ref{t:ConvSect_n-dim} the function $t\to b(t)=\sqrt[n-1]{a(tv)}$ is also concave on the compact interval 
\[
 I=\{t\in\R: \B(0,1)\cap (tv+N)\neq\emptyset\}.
\]
Using again \eqref{eq:B01-1} one easily observes that $I$ is an even interval, therefore
\[
 \beta_d(N)=\max_{z\in\B(0,1)}\cH_{|\cdot|}^{\n-1}\ls(\B(z,1)\cap N(v)\rs=\cH_{|\cdot|}^{\n-1}\ls N\cap\B(0,1)\rs,
\]
concluding the proof.
\begin{comment}
Being $N(v)\cap\der \B(0,1)$ locally parametrized by Lipschitz mappings on an $(n-2)$ dimensional open set,
then we obviously have $\cH_{|\cdot|}^{n-1}\ls N(v)\cap\der \B(0,1)\rs=0$, concluding the proof.
\end{comment}
%
%
%
%cmt
\begin{comment}
Notice that we have used the fact that
\[
 N(v)\cap\der \B(0,1)=\der\ls N(v)\cap\B(0,1)\rs
\]
and this is true, since the interior of $N(v)\cap\B(0,1)$ is nonempty.
The subset $\der\ls N(v)\cap\B(0,1)\rs$ is the boundary of a convex set in $N$ having nonempty interior,
hence it is locally parametrized by a Lipschitz mapping.
\end{comment}
\end{proof}
The next examples show homogeneous distances with convex metric unit ball.

\bex\label{ex:homdistEuclBall}\rm
In any homogeneous group one can find a homogeneous distance 
with convex unit ball. Indeed \cite[Theorem~2]{HebSik90} shows that
this distance can be found in such a way its corresponding metric unit ball
is the Euclidean ball with suitably small radius
and that all layers $H^j$ in the decomposition of $\G$ are orthogonal.
\eex
\bex\label{ex:htype}\rm
In any H-type group with direct decomposition $\G=H^1\oplus H^2$, the well known Cygan-Kor\'anyi norm
\[  \|x\|=\sqrt[4]{|x_1|^4+16|x_2|^2}, \]
where $(x_1,x_2)\in H^1\times H^2$ and $x=x_1+x_2$,
clearly yields a convex metric unit ball, see \cite{Cygan81} for more information.
This ``homogeneous norm'' defines the associated homogeneous distance 
$d(x,y)=\|x^{-1}y\|$.
\eex
%
%
%
%
%cmt
\bex\label{ex:dinfty} \rm
Setting $\ep_1=1$ and suitabliy small $\ep_i>0$, one can always construct a nonsmooth homogeneous distance defining
\[
\|x\|_\infty=\max\{\ep_i|x_i|^{1/i}: 1\le i\le\q\}
\]
and then $d(x,y)=\|x^{-1}y\|_\infty$ for $x,y\in\G$, see for instance \cite{SerraCassano2016}.
One can realize that
\[ 
\B=\{x\in\G: d(x,0)\le1\}=\{x\in\G:\,\ep_i^i|x_i|\le1\,\mbox{for all $i$} \}
\]
is a convex set.
\eex

Formula \eqref{eq:bdN} simplifies the computation of the spherical factor,
that however may depend on the subgroup $N$. The next section
deals with special symmetric distances where this dependence disappears.

%%%%%%%%%%%%%%%%%%%%%%%%%%%%%%%%%%%%%%%%%%%%%%%
%
%
%
% 								         VERTICALLY SYMMETRIC DISTANCES
%
%
%
\section{Special symmetries for homogeneous distances}\label{sect:Hsym}
%
%
%
%
%
%
%
%%%%%%%%%%%%%%%%%%%%%%%%%%%%%%%%%%%%%%%%%%%%%%%

This section studies classes of homogeneous distances for which the
spherical factor becomes a geometric constant. We  focus our attention on spherical factors for vertical subgroups of a fixed dimension, that appear as h-tangent spaces of some transversal submanifold.
\begin{Def}[$\n$-vertically symmetric distance]\label{d:hsym} \rm
Let $\G$ be a $\q$-dimensional stratified group let $\n\le\q$.
We consider $\ell_\n$ as defined in \eqref{d:ell} and denote $\ell=\ell_\n$.
Considering $\rr_\n$ as in \eqref{d:rn}, we introduce the integer
\[
\jmath=\left\{\begin{array}{ll}
0 & \text{if $\rr_\n=\dim H^\ell$} \\
\rr_\n & \text{if $\rr_\n<\dim H^\ell$}
\end{array}\right..
\]
If $\jmath=0$, then $d$ is automatically {\em $\n$-vertically symmetric}.
In the case $\jmath>0$, referring to the fixed graded scalar product, 
we assume that there exists a family $\cF_\ell\subset O(H^\ell)$ 
of isometries in $H^\ell$ such that
for any couple of $\jmath$-dimensional subspaces $S_1,S_2\subset H^\ell$ 
there exists $J\in\cF_\ell$ that satisfies the condition
\[
J(S_1)=S_2.
\]
Taking into account that $H^i$ and $H^j$ are orthogonal for $i\neq j$,
we introduce the class of isometries
\[
 \cO=\{T\in O(\G): T|_{H^j}=\Id_{H^j} \ \text{for all $j\neq\ell$ and}\  T|_{H^\ell}\in\cF_\ell\}. 
 \]
We denote by $P_V:\G\to V$ the orthogonal projection onto $V$.
We denote by 
\[
\B(0,1)=\{y\in\G: d(y,0)\le1\}
\]
the metric unit ball with respect to the homogeneous distance $d$ on $\G$.
Defining the subspaces
\[
U=H^1\oplus\cdots\oplus H^{\ell-1},\quad V=H^1\oplus\cdots\oplus H^\ell \qandq W=H^{\ell+1}\oplus\cdots\oplus H^\iota,
\]
setting $W=\set{0}$ in the case $\ell=\iota$,
we say that $d$ is {\em $\n$-vertically symmetric} if in addition
the following properties hold. If $P_V:\G\to V$ and $P_U:\G\to U$ 
denote the projections with respect to the direct decomposition 
\[
\G=V\oplus W\qandq  \G=U\oplus (H^\ell\oplus\cdots \oplus H^\iota),
\]
respectively, then we have that
\begin{enumerate}
\item 
$P_V(\B(0,1))=\B(0,1)\cap V=\{h\in V: \psi(P_U(v),|P_{H^\ell}(v)|)\le r_0\}$ for some $\psi:U\times [0,+\infty)\to[0,+\infty)$ that is monotone nondecreasing on the second variable and $r_0>0$,
\item
$T\ls \B(0,1)\rs=\B(0,1)$ for all $T\in\cO$.
\end{enumerate}
In the case $\ell=1$, we have both $U=\set{0}$ and 
$\psi:[0,+\infty)\to[0,+\infty)$ is any monotone nondecreasing function.
\end{Def}
\br
One may observe that any $(\q-1)$-vertically symmetric distance 
is {\em vertically symmetric} in the sense introduced in \cite{Mag31}.
\er
\br
Considering the homogeneous distance $d$ of Example~\ref{ex:homdistEuclBall},
it is not difficult to realize that $d$ is $\n$-vertically symmetric
for any $\n=1,\ldots,\q-1$.
As a consequence, the next theorem will show that its spherical factor for any
class of vertical subgroups of fixed dimension becomes a geometric constant.
\er
We have now all the tools to prove that the main symmetry result of this section.
\begin{The}\label{t:constBeta}
If $d$ is an $\n$-vertically symmetric distance, then the spherical factor 
\[
\beta_d(N)
\]
is constant for every $\n$-dimensional vertical subgroup $N\subset\G$.
\end{The}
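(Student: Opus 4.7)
The plan is to show that, given any two $\n$-dimensional vertical subgroups $N_1,N_2\subset\G$, one can construct a Euclidean orthogonal linear map $T:\G\to\G$ that sends $N_1$ onto $N_2$ and preserves the metric unit ball, and then to recast $\beta_d(N)$ in a form in which $T$-invariance is transparent. Setting $\ell=\ell_\n$, both subgroups have the form $N_i=N_{i,\ell}\oplus H^{\ell+1}\oplus\cdots\oplus H^\iota$, hence they differ only in the $\jmath$-dimensional subspace $N_{i,\ell}\subset H^\ell$. In the case $\jmath=0$ one has $N_1=N_2$ and there is nothing to prove, so assume $\jmath>0$; pick $J\in\cF_\ell$ with $J(N_{1,\ell})=N_{2,\ell}$ and extend it to $T\in\cO$ by the identity on the other layers. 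Then $T$ is Euclidean-orthogonal (so in particular $\cH^\n_{|\cdot|}$ is $T$-invariant), it is stratification-preserving, it satisfies $T(N_1)=N_2$ and hence $T(N_1^\perp)=N_2^\perp$, and by condition (2) of Definition~\ref{d:hsym} also $T(\B(0,1))=\B(0,1)$.

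The key step is to express $\beta_d(N)$ in a form that only sees the Euclidean structure. Using left-invariance $\B(u,1)=u\B(0,1)$ together with Lemma~\ref{l:vertical_meas_translation} applied to $A=\B(u,1)\cap N\subset N$ and $p=u^{-1}$, one gets
\[
\cH^\n_{|\cdot|}(\B(u,1)\cap N)=\cH^\n_{|\cdot|}(\B(0,1)\cap u^{-1}N).
\]
Since $N^\perp$ is a homogeneous subspace complementary to $N$, Lemma~\ref{l:hom_splitting} together with \eqref{eq:v+N=vN} yields $u^{-1}N=P(u^{-1})+N$, where $P:\G\to N^\perp$ denotes the orthogonal projection onto $N^\perp$; Euclidean translation invariance of $\cH^\n_{|\cdot|}$ then turns the right-hand side into $\cH^\n_{|\cdot|}\bigl((\B(0,1)-P(u^{-1}))\cap N\bigr)$. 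Left-invariance of $d$ gives $\|u\|=\|u^{-1}\|$, so $u\mapsto u^{-1}$ is a bijection of $\B(0,1)$, and as $u$ varies in $\B(0,1)$ the vector $w:=P(u^{-1})$ exhausts $P(\B(0,1))$. Consequently
\[
\beta_d(N)=\max_{w\in P(\B(0,1))}\cH^\n_{|\cdot|}\bigl((\B(0,1)-w)\cap N\bigr),
\]
a formula involving only the ambient Euclidean linear structure and $\B(0,1)$.

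Finally, $T$-invariance is read off from this formula. Denoting by $P_i$ the orthogonal projection onto $N_i^\perp$, orthogonality of $T$ with $T(N_1^\perp)=N_2^\perp$ gives the intertwining $P_2\circ T=T\circ P_1$, and combined with $T(\B(0,1))=\B(0,1)$ this yields $T\bigl(P_1(\B(0,1))\bigr)=P_2(\B(0,1))$, so $w\mapsto Tw$ is a bijection between the two maximization domains. Since $T$ is linear and injective, with $T(N_1)=N_2$ and $T(\B(0,1))=\B(0,1)$,
\[
(\B(0,1)-Tw)\cap N_2=T\bigl((\B(0,1)-w)\cap N_1\bigr),
\]
and the $T$-invariance of $\cH^\n_{|\cdot|}$ preserves the measure; taking the maximum over $w$ yields $\beta_d(N_1)=\beta_d(N_2)$. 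The main obstacle is the reformulation step, since $T$ is in general not a Lie group homomorphism of $\G$ and hence need not preserve the homogeneous distance $d$ itself; one must first strip the expression for $\beta_d(N)$ down to its Euclidean content before the orthogonal map $T$ can do the work.
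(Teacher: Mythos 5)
Your proof is correct, and although it starts from the same ingredients as the paper's argument --- the isometry $T\in\cO$ built from $J\in\cF_\ell$, the invariance of $\cH^\n_{|\cdot|}$ under left translations along vertical subgroups (Lemma~\ref{l:vertical_meas_translation}), and the splitting $u^{-1}N=P(u^{-1})+N$ coming from Lemma~\ref{l:hom_splitting} and \eqref{eq:v+N=vN} --- it diverges at the decisive step. The paper fixes $z\in\B(0,1)$, arrives at $\cH^\n_{|\cdot|}(\B(z,1)\cap N_1)=\cH^\n_{|\cdot|}\lls\B(0,1)\cap(Tv_1+N_2)\rrs$ with $v_1=P_{V_1}(z^{-1})$, and must then exhibit an admissible center $v_0=(Tv_1)^{-1}\in\B(0,1)$ in order to dominate this quantity by $\beta_d(N_2)$; checking that $Tv_1\in\B(0,1)$ is precisely where property (1) of Definition~\ref{d:hsym} (the monotone profile $\psi$ of the projected ball) enters, through the inequality $|P_{H^\ell}(v)|\ge|z_1|$. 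You sidestep this verification entirely by rewriting $\beta_d(N)$ as a maximum of $\cH^\n_{|\cdot|}\lls(\B(0,1)-w)\cap N\rrs$ over $w$ in the projected set $P(\B(0,1))$ (legitimate, since inversion is a bijection of the unit ball, so $P(u^{-1})$ exhausts $P(\B(0,1))$), and then using the intertwining $P_2\circ T=T\circ P_1$ --- valid because $T$ is Euclidean-orthogonal and maps $N_1$ onto $N_2$ --- to carry one maximization domain bijectively onto the other. The payoff of your route is that it only invokes condition (2) of Definition~\ref{d:hsym}, so it in fact proves the constancy of the spherical factor without the symmetry hypothesis (1); every step is justified by lemmas the paper already establishes, so I see no gap.
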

\begin{proof}
We first consider the integers $\ell_\n$ and $\rr_\n$ defined
in \eqref{d:ell} and \eqref{d:rn}, respectively.
If $\rr_\n=\h_{\ell_\n}$, namely $\jmath=0$ in Definition~\ref{d:hsym}, then $d$ is $\n$-vertically symmetric by definition.
Indeed in this case the only $\n$-dimensional vertical subgroup is
\[
N_0=H^{\ell_\n}\oplus\cdots\oplus H^\iota,
\]
therefore the spherical factor
is obviously constant and equal to $\beta(d,N_0)$.
Let us consider the case $0<\jmath=\rr_\n<\h_{\ell_\n}$, where 
\[
\n=\jmath+\h_{\ell_\n+1}+\cdots+h_\iota.
\]
To simplify notation, in the sequel we will write $\ell$ in place of $\ell_\n$.
We fix $z\in\B(0,1)$ and consider two arbitrary $\n$-dimensional 
vertical subgroups
\[
N_1=S_1\oplus W \qandq N_2=S_2\oplus W,
\]
where $S_1$ and $S_2$ are $\jmath$-dimensional subspaces of $H^{\ell}$ and
\[
W=H^{\ell+1}\oplus\cdots\oplus H^\iota.
\]
By the $\n$-vertical symmetry of $d$, there exists an isometry $J:H^\ell\to H^\ell$ with 
\[
J(S_1)=S_2.
\]
This defines the isometry $T:\G\to\G$ such that 
\[
T|_{H^j}=\Id_{H^j} \qandq T|_{H^\ell}=J
\]
for each $j\neq\ell$. We now define the subspace
\[
V_1=H^1\oplus\cdots\oplus H^{\ell-1}\oplus Z_1
\]
such that $Z_1$ is orthogonal to $S_1$ and
\[
Z_1\oplus S_1=H^\ell.
\]
We consider the orthogonal projection $P_{V_1}:\G\to V_1$, that is also
the linear projection associated to the direct sum
\[
\G=V_1\oplus N_1.
\]
Defining the nonlinear projection
\[
\Pi_{N_1}(x)=P_{V_1}(x)^{-1}x,
\]
Lemma~\ref{l:hom_splitting} ensures the following unique product decomposition
\beq\label{eq:z^-1decomp}
z^{-1}=P_{V_1}(z^{-1})\Pi_{N_1}(z^{-1})\quad\text{with}\quad 
\Pi_{N_1}(z^{-1})\in N_1.
\eeq
Setting $P_{V_1}(z^{-1})=v_1$, $\Pi_{N_1}(z^{-1})=h_1$ and
taking into account \eqref{eq:l_pMeasPreserving}, \eqref{eq:z^-1decomp} and
\eqref{eq:v+N=vN},  we get
\begin{equation*}
\cH_{|\cdot|}^\n(\B(z,1)\cap N_1)=\cH^\n_{|\cdot|}(\B(0,1)\cap (v_1+N_1)).
\end{equation*}
Since the previously defined mapping $T$ belongs to $\cO$, in view of the property (2) of Definition~\ref{d:hsym}, we obtain that
\[
\cH_{|\cdot|}^\n(\B(z,1)\cap N_1)=\cH_{|\cdot|}^\n\lls \B(0,1)\cap(Tv_1+T(N_1))\rrs.
\]
It is not difficult to realize that
\[
T(N_1)=T(S_1\oplus W)=J(S_1)\oplus W=S_2\oplus W=N_2,
\]
due to the definition of $T$ and the inclusions $S_1\subset H^\ell$.
We have proved that
\beq\label{eq:Bz1N_1}
\cH_{|\cdot|}^\n(\B(z,1)\cap N_1)=\cH^\n_{|\cdot|}\lls \B(0,1)\cap(Tv_1+N_2)\rrs.
\eeq
We wish to check whether $(Tv_1)^{-1}$ is a suitable element of $\B(0,1)$.
To do this, we define the subspace 
\[ 
V=H^1\oplus\cdots \oplus H^\ell
 \]
and consider the orthogonal decompositions
\[
z^{-1}=v_1+\eta_1=v+\eta,
\]
where $P_V(z^{-1})=v\in V$, $v_1$ and $v$ are orthogonal to $\eta_1\in N_1$ and
$\eta\in W$, respectively.
The previous equality gives
\[
s_1=v-v_1=\eta_1-\eta\in V\cap (S_1\oplus W)=S_1\subset H^\ell,
\]
hence $v_1$ and $s_1$ are orthogonal.
We write the orthogonal decomposition
\[
v_1=w_1+z_1 \quad \text{with $w_1\in H^1\oplus\cdots\oplus H^{\ell-1}$ and $z_1\in Z_1$}
\]
therefore $v=w_1+z_1+s_1$. Since $s_1$ is orthogonal to $w_1$,
it is also orthogonal to $z_1$, obtaining
\[
|P_{H^\ell}v|=|z_1+s_1|=\sqrt{|z_1|^2+|s_1|^2}\ge|z_1|. 
\]
By the property (1) of Definition~\ref{d:hsym}, 
since $P_V(z^{-1})=v\in P_V(\B(0,1))$, we have
$\psi(u,t)\ge0$ that is monotone nondecreasing with respect to $t$ and
such that
\[
v\in \B(0,1)\cap V=\{y\in V: \psi(P_U(y),|P_{H^\ell}(y)|)\le r_0\},
\]
where $U= H^1\oplus\cdots\oplus H^{\ell-1}$. From the monotonicity of $\psi$ we have
\[
\psi(P_U(v_1),|P_{H^\ell}(v_1)|)=\psi(P_U(v),|z_1|)\le \psi(P_U(v),|P_{H^\ell}(v)|)\le r_0.
\]
Moreover, taking into account that
\[
Tv_1=w_1+Jz_1=P_U(v_1)+Jz_1=P_U(Tv_1)+Jz_1,
\]
there holds 
\[
\psi(P_U(Tv_1),|P_{H^\ell}(Tv_1)|)=\psi(w_1,|Jz_1|)=\psi(w_1,|z_1|)
=\psi(P_U(v_1),|P_{H^\ell}(v_1)|)\le r_0.
\]
We have proved that  
\[
Tv_1\in \B(0,1)\cap V\subset \B(0,1)
\]
and clearly $v_0=(Tv_1)^{-1}\in V\cap\B(0,1)$. 
Due to \eqref{eq:l_pMeasPreserving}, it follows that
\beq\label{eq:J(v_1)N_2)}
\cH^\n_{|\cdot|}\pa{\B(v_0,1)\cap N_2}=\cH^\n_{|\cdot|}\pa{\B(0,1)\cap v_0^{-1}N_2}=\cH^\n_{|\cdot|}\pa{\B(0,1)\cap (Tv_1)N_2}.
\eeq
Since $T$ is an isometry the images $Tv_1$ and $N_2=T(N_1)$
are orthogonal, being so $v_1$ and $N_1$, hence we may apply \eqref{eq:v+N=vN}, getting
\[
\cH^\n_{|\cdot|}\pa{\B(v_0,1)\cap N_2}=\cH^\n_{|\cdot|}\pa{\B(0,1)\cap (Tv_1+N_2)}.
\]
The previous equality joined with \eqref{eq:Bz1N_1} yields
\[
\cH_{|\cdot|}^\n(\B(z,1)\cap N_1)=\cH^\n_{|\cdot|}\pa{\B(v_0,1)\cap N_2}\leq\beta_d(N_2)
\]
and the arbitrary choice of $z\in\B(0,1)$ yields $\beta_d(N_1)\le\beta_d(N_2)$. Exchanging the role of $N_1$ with that of $N_2$, we conclude the proof.
\end{proof}
The intriguing aspect of the previous theorem is that the metric unit ball is not 
assumed to be convex. For instance the sub-Riemannian ball in the Heisenberg group is not convex, but it is 2-vertically symmetric distance, as pointed out in \cite{Mag31}. 

Examples~\ref{ex:htype} and \ref{ex:dinfty} suggest a special form of a homogeneous distance, that are $\n$-vertically symmetric for all $\n=1,\ldots,\q-1$. This more manageable notion of distance is presented in the next definition.
\begin{Def}[Multiradial distance]\label{d:multirad}\rm
The assumptions of Theorem~\ref{t:constBeta} also hold when more generally a homogeneous distance $d:\G\times\G\to\R$ satisfies the following conditions.
There exists $\ph:[0,+\infty)^\iota\to[0,+\infty)$, which is continuous and monotone nondecreasing on each single variable, with
\beq\label{eq:dmulti}
d(x,0)=\ph(|x_1|,\ldots,|x_\iota|),
\eeq
$x_j=P_{H^j}(x)$ and $P_{H^j}:\G\to H^j$ is the canonical projection
with respect to the direct sum decomposition of $\G$ into subspaces $H^j$.
The function $\ph$ is also assumed to be {\em coercive} in the sense that
\[
\ph(x)\to+\infty \qs{as} |x|\to+\infty.
\]
Let us stress that the symbol $|\cdot|$ indicates the Euclidean norm
arising from the fixed graded scalar product, see Section~\ref{sect:notions}.
\ed
\bpr\label{pr:multiradial}
If $d:\G\times\G\to[0,+\infty)$ is multiradial, then it is also
$\n$-vertically symmetric for every $\n=1,\ldots,\iota$. 
\epr
\begin{proof}
We represent the metric unit ball as follows
\beq\label{eq:Bph}
\B(0,1)=\set{x\in\G: \ph(|x_1|,\ldots,|x_\iota|)\le1},
\eeq
observing that in general it need not be convex. 
In the nontrivial case where 
\[
\n=\jmath+\h_{\ell+1}+\cdots+\h_\iota
\]
and $0<\jmath<\h_\ell$, we consider two $j$-dimensional subspaces
$S_1, S_2\subset H^\ell$. Then we can find
an Euclidean isometry $J:H^\ell\to H^\ell$ with $J(S_1)=S_2$,
hence we set $\cF_\ell=O(H^\ell)$.
We construct the isometry $T:\G\to\G$ as follows
\[
T|_{H^\ell}=J \qandq  T|_{H^j}=\Id_{H^j}
\]
for all $j\neq\ell$, taking into account that our fixed scalar product is
such that all subspaces $H^j$ are orthogonal to each other.
The special representation \eqref{eq:Bph} of the metric unit ball clearly gives
\[
T\pa{\B(0,1)}=\B(0,1).
\]
Let us consider the projection $P_V:\G\to V$ with $V=H^1\oplus\cdots\oplus H^\ell$
and observe that
\[
P_V(\B(0,1))=\B(0,1)\cap V=\set{x\in\G: d(x,0)=\ph(|x_1|,\ldots,|x_\ell|,0,\ldots,0)\le1}
\]
in view of the nondecreasing monotonicity of $\ph$ with respect to each single variable.
If we set 
\[
\psi(u,t)=\ph(|P_{H^1}u|,\ldots,|P_{H^{\ell-1}}u|,t,0,\ldots,0),
\]
with $u\in H^1\oplus\cdots\oplus H^{\ell-1}$, then also property (1) 
of Definition~\ref{d:hsym} is established. We have proved that  
$d$ is $\n$-vertically symmetric.
\end{proof}
\br
It is not difficult to observe that the distances of Examples~\ref{ex:dinfty} and \ref{ex:htype} are both multiradial distances.
Thus, as a consequence of Proposition~\ref{pr:multiradial} joined with Theorem~\ref{t:constBeta}, the subsequent formulae \eqref{eq:transv_area_nsymmetric},
\eqref{eq:nonhoriz_nsymmetric} and \eqref{eq:coarea} hold for these distances.
\er
%
\begin{comment}
\bex\rm
If $\|x\|=\sqrt[4]{|x_1|^4+16|x_2|^2}$ denotes the homogeneous norm
already introduced in Example~\ref{ex:htype}
for an H-type group $\G=H^1\oplus H^2$ and 
$\n=1,\ldots,\q-1$, we wish to show that 
$\beta_d(N)$ is constant for every vertical subgroup
of dimension $\n$, where $d(x,y)=\|x^{-1}y\|$.
We set $\h_j=\dim H^j$ with $j=1,2$ and consider first the case
$1\le \n<\h_2$. For $S_1,S_2\subset H^2$ we can clearly find an
Euclidean isometry $J:H^2\to H^2$ with $J(S_1)=S_2$,
therefore we choose $T:\G\to\G$ with
\[
T|_{H^1}=\Id_{H^1} \qandq  T|_{H^2}=J.
\]
Defining the metric unit ball $\B=\set{x\in \G: \|x\|\le1}$, one
easily notices that both $T(\B)\subset\B$ and $T^{-1}(\B)\subset\B$,
hence $T(\B)=\B$. Being $\ell=2$ and $U=H^1$, property (1) of Definition~\ref{d:hsym} follows by setting
\[
\psi(u,t)=\sqrt[4]{|u|^4+16 t^2}
\]
for all $u\in U$ and $t\ge0$. Indeed in this special case $V=\G$ and we have
\[
P_V(\B(0,1))=\B(0,1)=\set{x\in\G: \psi(P_U(x),|P_{H^2}(x)|)\le 1}.
\]
Considering the family $\cO$ of isometries $T$ as above, we finally established 
that $d$ is $\n$-vertically symmetric.
The case $\n=\h_2$ is trivial by definition and when $\n>\h_2$ 
an analogous proof can be achieved.
As a consequence, in all the previous cases we can apply Theorem~\ref{t:constBeta},
getting a constant spherical factor $\beta_d$ on all vertical subgroups of the same dimension. A more general proof is presented in the next example.
\eex
\end{comment}

%%%%%%%%%%%%%%%%%%%%%%%%%%%%%%%%%%%%%%%%%%%%%%%
%
%
%
%									
%
%fin qui ci sta anche assieme all'abstract
\section{Area formulae in homogeneous groups}\label{sect:SphericalMeasure}
%
%
%
%
%
%
%
%%%%%%%%%%%%%%%%%%%%%%%%%%%%%%%%%%%%%%%%%%%%%%%

This section is devoted to a number of applications arising from our main results.
We provide a comprehensive treatment of integral formulae for submanifolds with respect to a homogeneous distance. 

Throughout $\G$ denotes an arbitrary homogeneous group and $\Sigma\subset\G$ is an $\n$-dimensional $C^1$ smooth submanifold of degree $\NN$. Its {\em characteristic set} and its {\em subset of maximum degree} are defined by
\beq\label{eq:S_Sigma-R_Sigma}
\cC_\Sigma=\set{p\in\Sigma: d_\Sigma(p)<\NN}\qandq \cM_\Sigma=\set{p\in\Sigma: d_\Sigma(p)=\NN},
\eeq
respectively. We also fix the intrinsic measure $\mu_\Sigma$,
along with the Riemannian metrics $\tilde g$ and $g$,
as in Definition~\ref{d:SRmeasure}.
We will use the spherical measure $\cS^\NN_0$ of \eqref{d:S_0^alpha},
that does not contain any geometric constant.

%%%%%%%%%%%%%%%%%%%%%%%%%%%%%%%%%%%%%%%%%%%%
%
%
%
%								TRANSVERSAL SUBMANIFOLDS
%
%
%
\subsection{Transversal and non-horizontal submanifolds}\label{sect:TransvSubmanifolds}
%
%Intrinsic or spherical measure
%
%
%
%
%
%%%%%%%%%%%%%%%%%%%%%%%%%%%%%%%%%%%%%%%%%%%%

We show how the upper blow-up theorem immediately provides
a general integral formula that relates spherical measure and intrinsic measure
for all transversal submanifolds in any homogeneous group. 
Due to the results of Sections~\ref{sect:Hsym} and \ref{sect:sectConvex},
we will also investigate how symmetry conditions on the homogeneous distance provide simpler integral formulae.

\bt[Transversal submanifolds]\label{t:TransvSub}
If $\Sigma\subset\G$ is an $\n$-dimensional transversal submanifold
of degree $\NN$, then for every Borel set $B\subset \Sigma$ we have
\beq\label{eq:transv_area}
\mu_\Sigma(B)=\int_B \|\tau^{\tilde g}_{\Sigma,\NN}(p)\|_g\, d\sigma_{\tilde g}(p)
=\int_B \beta_d(A_p\Sigma)\, d\cS^\NN_0(p),
\eeq
where $\tilde g$ is any fixed Riemannian metric.
If $d$ is $\n$-vertically symmetric, then for any homogeneous tangent
space $V$ of $\Sigma$, having degree $\NN$, the metric 
factor $\beta_d(V)$ equals a geometric constant $\omega_d(\n,\NN)$
and defining $\cS^\NN_d=\omega_d(\n,\NN)\, \cS^\NN_0$,
there holds
\beq\label{eq:transv_area_nsymmetric}
\cS^\NN_d\res\Sigma(B)=\int_B \|\tau^{\tilde g}_{\Sigma,\NN}(p)\|_g\, d\sigma_{\tilde g}(p).
\eeq
\et
\begin{proof}
From the definitions of \eqref{eq:S_Sigma-R_Sigma}, by
Theorem~1.2 of \cite{Mag26TV} we get $\cS^\NN_0(\cC_\Sigma)=0$.
The definition of intrinsic measure \eqref{d:intmeas} joined with
Remark~\ref{r:degreeTangent} yield $\mu_\Sigma(\cC_\Sigma)=0$.
This allows us to restrict our attention to points of $\cM_\Sigma$.
For every Borel set $E\subset \cM_\Sigma$, each point $p\in E$
has maxium degree, therefore Proposition~\ref{pr:transv_maxdeg}
implies that it is a transversal point.
Then we are in the position to apply part (4) of Theorem~\ref{t:UpBlwC} to each $p\in E$, getting formula \eqref{e:UpBlwSub}. 
The everywhere finiteness of the spherical Federer density
$\theta^\NN(\mu_\Sigma,\cdot)$ shows that 
$\mu_\Sigma\res E$ is absolutely continuous with respect $\cS^\NN_0\res E$
and the measure theoretic area formula \eqref{eq:spharea} applied to
$\mu_\Sigma$ yields
\[
\mu_\Sigma(E)=\int_E \beta_d(A_p\Sigma)\,d\cS_0^\NN(p).
\]
This formula joined with the negligibility of $\cC_\Sigma$
immediately leads us to \eqref{eq:transv_area}.
If in addition $d$ is $\n$-vertically symmetric,
Theorem~\ref{t:constBeta} shows that $\beta_d(N)$
is constant on all $\n$-dimensional vertical subgroups.
We denote this constant by $\omega_d(\n,\NN)$ 
and define the rescaled spherical measure $\cS^\NN_d=\omega_d(\n,\NN)\cS^\NN_0$.
Due to Proposition~\ref{pr:transv_maxdeg}, any homogeneous tangent space $A_p\Sigma$ is a vertical subgroup whenever $p\in\cM_\Sigma$, then
\eqref{eq:transv_area} translates into \eqref{eq:transv_area_nsymmetric}.
\end{proof}
When $k\le \m$, a $k$-codimensional $C^1$ smooth submanifold $\Sigma$
is non-horizontal if and only if it is transversal.
In equivalent terms, $\Sigma$ is non-horizontal if 
and only if its degree is $Q-k$. Let us fix a Riemannian metric $\tilde g$, and consider a
$\tilde g$-unit tangent $(\q-k)$-vector $\tau_\Sigma(p)$ of $\Sigma$ at $p$.
We may define a {\em $\tilde g$-normal} of $\Sigma$ at $p$ as follows
\[
{\bf n}_{\tilde g}(p)=\pm\ast_{\tilde g}(\tau_\Sigma(p)).
\]
Here $\ast_{\tilde g}$ denotes the Hodge operator with respect to the Riemannian metric $\tilde g$ and to the fixed orientation.
We consider the linear mappings $\tilde g^*,g^*:T\G\to T^*\G$
associated to the Riemannian metrics $\tilde g$ and the
fixed left invariant metric $g$. 
Defining the linear mappings $\tilde g^*_k,g^*_k:\Lambda_k(T\G)\to\Lambda^k(T\G)$
canonically associated to $\tilde g^*$ and $g^*$, respectively,
we define the {\em horizontal $k$-normal at $p$} with respect to $\tilde g$ and $g$ as follows
\[
{\bf n}_{g,\tilde g,H}(p)=\pi^0_{p,k}\pa{(g^*_k)^{-1}\tilde g^*_k\pa{{\bf \tilde n}(p)}}.
\]
The projection $\pi^0_{p,k}$ is defined in \eqref{eq:Pi_pM0}.
We assume now that the volume measure $\vol_{\tilde g}$ is left invariant
and define the unique geometric constant $c(g,\tilde g)>0$ such that
\[
c(g,\tilde g)\, \vol_{\tilde g}=\vol_ g.
\]
Since the class of $(\q-k)$-vertically symmetric distances is larger than
the one in \cite[Section~6]{Mag12A},
using Theorem~\ref{t:TransvSub} we obtain an area formula
with constant spherical factor for a large family of distances. 
\bc[Non-horizontal submanifolds]\label{c:Nonhoriz}
Let $1\le k\le m$ and let $\Sigma\subset\G$ be a $C^1$ smooth 
submanifold of codimension $k$ and degree $Q-k$.
Then for every Borel set $B\subset \Sigma$ there holds
\beq\label{eq:nonhoriz}
\mu_\Sigma(B)=c(g,\tilde g) \int_B \|{\bf n}_{g,\tilde g,H}(p)\|_g\, d\sigma_{\tilde g}(p)
=\int_B \beta_d(A_p\Sigma)\, d\cS^{Q-k}_0(p),
\eeq
where $\tilde g$ is any Riemannian metric, whose volume measure is left invariant.
In the case $d$ is $(\q-k)$-vertically symmetric, 
then for any homogeneous tangent space $V$ of $\Sigma$, having degree $Q-k$, the metric 
factor $\beta_d(V)$ equals the geometric constant $\omega_d(\q-k,Q-k)$
and defining $\cS^{Q-k}_d=\frac{\omega_d(\q-k,Q-k)}{c(g,\tilde g)}\, \cS^{Q-k}_0$,
there holds
\beq\label{eq:nonhoriz_nsymmetric}
\cS^{Q-k}_d\res\Sigma(B)=\int_B \|{\bf n}_{g,\tilde g,H}(p)\|_g\, d\sigma_{\tilde g}(p).
\eeq
\ec
\begin{proof}
The claims are a straightforward consequence of \eqref{eq:transv_area} and
of \eqref{eq:transv_area_nsymmetric}, joined with formula (12) of \cite{Mag12A}.
\end{proof}
\br
It is well known for instance that any $C^1$ smooth hypersurface $\Sigma\subset\G$ is automatically a non-horizontal submanifold, therefore formulae
\eqref{eq:nonhoriz} and \eqref{eq:nonhoriz_nsymmetric} hold
for any $C^1$ smooth hypersurface of a stratified group $\G$.
The local isoperimetric inequality applied to a suitably ``small'' open subset $U\subset\Sigma$ of $\Sigma$ shows that it must have positive $\cS^{Q-1}_0$ measure. 
Since characteristic points are $\cS^{Q-1}_0$ negligible \cite{Mag5},
the piece $U$ must contain non-characteristic points. 
This shows that $\Sigma$ has degree $Q-1$. 
\er
In any homogeneous group $\G$ equipped with graded coordinates $x_j$,
the corresponding basis of left invariant vector fields $X_1,\ldots,X_\q$ has the form
\[
X_j=\der_{x_j}+\sum_{d_l>d_j} a_{jl} X_l,
\]
and it is automatically assumed to be orthonormal with respect to
the fixed left invariant metric $g$.
The dual basis of left invariant differential forms has the form
\[
\xi_j=dx_j+\sum_{d_l>d_j} b_{jl}\, dx_l.
\]
The special form of the left invariant differential forms $\xi_j$ implies that
\[
\xi_1\wedge \xi_2\wedge \cdots \wedge \xi_\q=dx_1\wedge dx_2\wedge \cdots \wedge dx_\q,
\]
therefore using the Euclidean metric in place of $\tilde g$ yields
\[
c(g,\tilde g)=1.
\]
This simplifies formula \eqref{eq:nonhoriz} and it allows one to use the Euclidean metric to compute the spherical measure of a submanifold.

\begin{Exa}\rm
Using graded coordinates $x_j$ in a homogeneous group $\G$
we consider the standard Euclidean metric $\tilde g$ given by $\delta_{ij}$,
along with our fixed left invariant Riemannian metric $g$.
Let $\Sigma$ be a non-horizontal submanifold of codimension $k$
with unit normal ${\bf n}={\bf n}_{\tilde g}$ with repect to the Euclidean metric $\tilde g$. Setting 
${\bf n}_g={\bf n}_{g,\tilde g}$ and ${\bf n}_{g,H}={\bf n}_{g,\tilde g,H}$, there holds
\[
\|{\bf n}_{g,H}\|_g=\sqrt{\sum_{1\le j_1<j_2<\cdots<j_k\le \m}
\ban{{\bf n},X_{j_1}\wedge \cdots \wedge X_{j_k}}_E^2},
\]
where $\ban{\cdot,\cdot}_E$ denotes the Euclidean scalar product
with respect to the graded coordinates $x_j$.
Indeed by definition of ${\bf n}_g$ and ${\bf n}_{g,H}$, there holds
\[
\ban{{\bf n}_g,X_{j_1}\wedge \cdots \wedge X_{j_k}}_g=
\ban{{\bf n},X_{j_1}\wedge \cdots \wedge X_{j_k}}_E.
\]
Since $c(g,\tilde g)=1$, formula \eqref{eq:nonhoriz_nsymmetric} gives
a more explicit area formula
\[
\cS^{Q-k}_d(B)=\int_B \sqrt{\sum_{1\le j_1<j_2<\cdots<j_k\le \m}
\ban{{\bf n},X_{j_1}\wedge \cdots \wedge X_{j_k}}_E^2}\; d\cH^{\q- k}_E(p)
\]
for every Borel set $B\subset \Sigma$ and for aby $(\q-k)$-vertically symmetric homogeneous distances $d$, where $\cH^{\q-k}_E$ denotes the $(\q-k)$-dimensional Hausdorff measure with respect to the Euclidean
distance and we have set
\[
\cS^{Q-k}_d=\omega_d(\q-k,Q-k)\, \cS^{Q-k}_0.
\] 
In particular, since all smooth hypersurfaces are non-horizontal submanifolds,
the previous formula for $k=1$ yields the well known formula for the
spherical measure $\cS^{Q-1}$ of hypersurfaces
\[
\cS^{Q-1}_d(\Sigma)=\int_\Sigma 
\sqrt{\sum_{j=1}^{\m}
\ban{{\bf n},X_j}_E^2}\; d\cH^{\q-1}_E(p)
\]
that is here extended to the largest class of suitably symmetric distances,
namely the $(q-1)$-vertically symmetric distances, according to
Definition~\ref{d:hsym}.
\end{Exa}
The general notion of vertically symmetric distance has effects also 
on the known coarea formulae, extending the one of \cite{Mag12A}
to this class of distances.

\bc[Coarea formula]\label{c:Coarea}
Let $1\le k\le \m$, let $f:A\to\R^k$ be a Riemannian Lipschitz map,
where $A\subset\G$ is measurable and consider a homogeneous distance 
$d$ on $\G$ that is $(\q-k)$-vertically symmetric.
By Theorem~\ref{t:constBeta}, the geometric constant 
$\omega_d(\q-k,Q-k)=\beta_d(V)$ does not depend on the choice of
the vertical subgroup $V\subset\G$ of dimension $\q-k$.  
Thus, defining the rescaled spherical measure
\[
\cS^{Q-k}_d=\omega_d(\q-k,Q-k)\, \cS^{Q-k}_0,
\]
for any nonnegative measurable function $u:A\to\R$, we have
\beq\label{eq:coarea}
\int_A u(x) J_{g,H}f(x)\, d\vol_g(x) =\int_{\R^k}\pa{\int_{f^{-1}(t)}
 u(x)\, \cS^{Q-k}_d(x)}dt,
\eeq
where $\vol_g$ is the Riemannian volume measure on $\G$ and 
\[
J_{g,H}(x)=\|\pi_{x,k}(\nabla f_1(x)\wedge\nabla f_2(x)\wedge \cdots\wedge \nabla f_k(x)) \|
\]
is the horizontal Jacobian at every differentiability point $x\in A$ of $f$.
\ec
The arguments to establish this corollary are the same of Theorem~1.2 in \cite{Mag12A}, joined with the area formula \eqref{eq:nonhoriz_nsymmetric}.
Considering the Riemannian metric $\tilde g$ and the spherical
measure $\cS^{Q-k}_d$ of Corollary~\ref{c:Nonhoriz}, one immediately
realizes that 
\beq\label{eq:coarea_gtilde}
\int_A u(x) J_{g,H}f(x)\, d\vol_{\tilde g}(x) =\int_{\R^k}\pa{\int_{f^{-1}(t)}
 u(x)\, \cS^{Q-k}_d(x)}dt,
\eeq
under the assumptions of Corollary~\ref{c:Coarea}.

%%%%%%%%%%%%%%%%%%%%%%%%%%%%%%%%%%%%%%%%%%%%%%%
%
%
%
%								AREA IN TWO STEP GROUPS
%
%
%
\subsection{Submanifolds in two step groups}\label{sect:HorRectHomGroups}
%
%intrinsic or spherical measure
%
%
%
%
%
%%%%%%%%%%%%%%%%%%%%%%%%%%%%%%%%%%%%%%%%%%%%%%%
This section presents some results to compute the spherical measure
of submanifolds in two step groups.
The main point here is that multiradial distances yield constant spherical factor in any two step homogeneous group.

\bpr\label{pr:MetricStep2}
If $\G$ is a step two homogeneous group and $d$ is a multiradial distance, then for every $\n$-dimensional homogeneous subspace $V\subset \G$ we have
\beq\label{eq:betanStep2}
\beta_d(V)=\cH_{|\cdot|}^\n(\B\cap V),
\eeq
where $1\le \n\le\q-1$ and $\B=\set{x\in\G: d(x,0)\le 1}$.
\epr
\begin{proof}
Denote $\B=\B(0,1)$, choose $z\in\B$ and write $V=V_1\oplus V_2$ with
$V_j\subset H^j$, being $V$ a homogeneous subspace of $\G$.
Then the assumptions on $d$ ensure that $\B$ is defined as 
in \eqref{eq:Bph}, therefore we get
\[
V\cap\B(z,1)=\set{v\in V: \ph(|P_{H^1}(z^{-1}v)|,|P_{H^2}(z^{-1}v|)\le 1 }.
\]
The BCH formula \eqref{eq:BCH} yields
\[
V\cap\B(z,1)=\set{v_1+v_2\in V: \ph\pa{|v_1-z_1|,\pal{v_2-z_2-\frac12[z_1,v_1]}}\le 1 }
\]
with $z_j=P_{H^j}(z)$ and $v_j=P_{H^j}(v)$.
From the coercivity of $\ph$ we can define 
\[
r_1=\sup\set{t\ge0: \ph(t,0)\le 1},
\]
then considering an orthogonal system of coordinates on $V$
and denoting by $\cL^\n$ the corresponding Lebesgue measure on $V$, 
Fubini's theorem yields
\beq\label{eq:H^nB(0,1)}
\begin{split}
\cH^\n_{|\cdot|}(V\cap\B(z,1))&=\cL^\n(V\cap\B(z,1)) \\
&=\int_{V_1\cap B_E(z_1,r_1)} \cL^{\n_2}\pa{\set{v_2\in V_2:
v_1+v_2\in \B(z,1)}} dv_1
\end{split}
\eeq
where $B_E(z,r)=\set{x\in\G: |x-z|<r}$ and $\n_j=\dim V_j$.
From the nondecreasing monotonicity of $\ph$ with respect to each variable, defining
\[
\rho(x)=\sup\set{t\ge0:\ph(|x|,t)\le 1}
\]
whenever $|x|<r_1$ gives the monotonicity
\beq\label{eq:monotwj}
\rho(w_2)\le \rho(w_1) \qs{for} |w_1|\le |w_2|<r_1.
\eeq
As a consequence, using the integral representation \eqref{eq:H^nB(0,1)} for $z=0$ we get the formula
\beq\label{eq:H^nrho}
\begin{split}
\cH^\n_{|\cdot|}(V\cap \B(0,1))&=\int_{V_1\cap B_E(0,r_1)}
\cL^{\n_2}\pa{\set{v_2\in V_2: \ph(|v_1|,|v_2|)\le 1}} dv_1 \\
&=\int_{V_1\cap B_E(0,r_1)} \cL^{\n_2}\lls V_2\cap B_E(0,\rho(v_1))\rrs dv_1,
\end{split}
\eeq
that will be used later.
Defining the function $\Psi(z,v_1)=z_2+\frac12[z_1,v_1]$, we have
\[
\begin{split}
\cH^\n_{|\cdot|}(V\cap\B(z,1))&=\int_{V_1\cap B_E(z_1,r_1)} \cL^{\n_2}\pa{V_2\cap B_E\lls\Psi(z,v_1),\rho(v_1-z_1)\rrs} dv_1 \\
&=\int_{(V_1-z_1)\cap B_E(0,r_1)} \cL^{\n_2}\pa{V_2\cap B_E\lls\Psi(z,v_1),\rho(v_1)\rrs} dv_1 \\
&\le \int_{V_1\cap B_E(0,r_1)} \cL^{\n_2}\pa{V_2-\Psi(z,v_1)\cap B_E\lls0,\rho(v_1)\rrs} dv_1.
\end{split}
\]
Taking into account the last inequality, Theorem~\ref{t:ConvSect_n-dim}
and formula \eqref{eq:H^nrho}, we get
\[
\cH^\n_{|\cdot|}(V\cap\B(z,1))\le \int_{V_1\cap B_E(0,r_1)} \cL^{\n_2}\pa{V_2\cap B_E\lls0,\rho(v_1)\rrs} dv_1=\cH^{\n}_{|\cdot|}(V\cap \B),
\]
therefore concluding the proof.
\end{proof}

\bpr\label{pr:constantmetfact2step}
Let $\G$ be a step two homogeneous group and let $d$ be a multiradial distance.
Fix two integers $\n_1\le \h_1$ and $\n_2\le\h_2$. Then the spherical factor
with respect to $d$ is constant on all homogeneous subspaces $V=V_1\oplus V_2\subset \G$ with $\dim V_1=\n_1$ and $\dim V_2=\n_2$.
\epr
\begin{proof}
Let us consider two homogeneous subspaces $V$ and $W$ with direct decompositions
$V_1\oplus V_2$ and $W_1\oplus W_2$, respectively, where
\[
\dim V_1=\dim W_1=\n_1\qandq \dim V_2=\dim W_2=\n_2.
\]
Considering two isometries $J_1:H^1\to H^1$ and $J_2:H^2\to H^2$ such that 
\[
J_1(V_1)=W_1\qandq J_2(V_2)=W_2,
\]
respectively, then for every $x_1\in H^1$ and $x_2\in H^2$ we define
\[
T(x_1+x_2)=J_1(x_1)+J_2(x_2).
\]
The mapping $T:\G\to\G$ is an isometry since $H_1$ is orthogonal to $H_2$
with respect to our fixed graded scalar product on $\G$.
Moreover, the shape of the metric unit ball $\B$ gives the equalities
\[
\begin{split}
T(\B\cap V)&=T\pa{\set{x_1+x_2\in\G: x_1\in V_1,\ x_2\in V_2,\ \ph(|x_1|,|x_2|)\le 1 }} \\
&=\set{T(x_1)+T(x_2)\in\G: x_1\in V_1,\ x_2\in V_2,\ \ph(|x_1|,|x_2|)\le 1 } \\
&=\set{T(x_1)+T(x_2)\in\G: x_1\in V_1,\ x_2\in V_2,\ \ph(|T(x_1)|,|T(x_2)|)\le 1 } \\
&=\set{y_1+y_2\in\G: y_1\in W_1,\ y_2\in W_2,\ \ph(|y_1|,|y_2|)\le 1 } \\
&=\B\cap W.
\end{split}
\]
As a consequence, formula \eqref{eq:betanStep2} concludes the proof.
\end{proof}

\bt[Intrinsic measure in two step groups]\label{t:area2steps}
If $\G$ has step two, $\cS_0^\NN\pa{\cC_\Sigma}=0$ and $p'\in\Sigma$ 
is algebraically regular for all $p'\in \cM_\Sigma$, then for every Borel set 
$B\subset \Sigma$ we have
\beq\label{eq:area2step}
\mu_\Sigma(B)=\int_B \|\tau^{\tilde g}_{\Sigma,\NN}(p)\|_g\, d\sigma_{\tilde g}(p)
=\int_B \beta_d(A_p\Sigma)\, d\cS^\NN_0(p),
\eeq
where $\tilde g$ is any fixed Riemannian metric.
If $d$ is multiradial, then for any homogeneous tangent
space $V$ of $\Sigma$, having degree $\NN$, the metric 
factor $\beta_d(V)$ equals a geometric constant $\omega_d(\n,\NN)$.
Thus, defining $\cS^\NN_d=\omega_d(\n,\NN)\, \cS^\NN_0$, the following area formula holds
\beq\label{eq:area2stepmultiradial}
\cS^\NN_d\res\Sigma(B)=\int_B \|\tau^{\tilde g}_{\Sigma,\NN}(p)\|_g\, d\sigma_{\tilde g}(p).
\eeq
\et
\begin{proof}
Our assumptions joined with definition \eqref{eq:S_Sigma-R_Sigma}, 
the formula for the intrinsic measure \eqref{d:intmeas} and 
Remark~\ref{r:degreeTangent} yield the conditions 
\beq\label{eq:NegligNN}
\mu_\Sigma(\cC_\Sigma)=\cS^\NN_0(\cC_\Sigma)=0.
\eeq
If $E\subset \cM_\Sigma$ is any Borel set, we may apply part (2) of Theorem~\ref{t:UpBlwC} to each $p\in E$, since all of these points are algebraically regular.
This allows us to establish \eqref{e:UpBlwSub}. 
In particular, the spherical Federer density
$\theta^\NN(\mu_\Sigma,\cdot)$ is everywhere finite on $E$, hence
$\mu_\Sigma\res E$ is absolutely continuous with respect $\cS^\NN_0\res E$.
We are then in the condition to apply the measure theoretic area formula \eqref{eq:spharea}, obtaining that
\[
\mu_\Sigma(E)=\int_E \beta_d(A_p\Sigma)\,d\cS_0^\NN(p)\,,
\]
therefore \eqref{eq:area2step} holds. The previous equality
joined with \eqref{eq:NegligNN} leads us to \eqref{eq:area2step}.

Let us now assume that $d$ is multiradial, and consider any
point $p\in\cM_\Sigma$. 
Taking into account Proposition~\ref{pr:translation}, Remark~\ref{r:localdegreealpha_i} and Theorem~\ref{t:specialcoord} there exist two positive integers $\alpha_1\le\h_1$ and $\alpha_2\le\h_2$ such that 
\[
\alpha_1+\alpha_2=\n\qandq \alpha_1+2\alpha_2=\NN.
\]
The previous conditions uniquely define both 
\[
\alpha_1=2\n-\NN \qandq \alpha_2=\NN-\n. 
\]
From Proposition~\ref{pr:homtansp}, any 
homogeneous tangent space $V$ at a point of $\cM_\Sigma$
has the direct decomposition
\[
V=V_1\oplus V_2,\quad V_1\subset H^1,\quad V_2\subset H^2, \quad \dim V_1=\alpha_1\qandq \dim V_2=\alpha_2.
\]
We are now in the position to apply Proposition~\ref{pr:constantmetfact2step}
to all these homogeneous tangent spaces $V$, getting a constant
that we denote by $\omega_d(\n,\NN)$, such that 
\[
\beta_d(V)=\omega_d(\n,\NN).
\]
Thus formula \eqref{eq:area2step} and the definition
$\cS^\NN_d=\omega_d(\n,\NN)\, \cS^\NN_0$ conclude the proof.
\end{proof}
A special application of the previous theorem is the following.
\bc\label{c:2stepC^11}
If $\Sigma$ is a $C^{1,1}$ smooth submanifold in a two step group $\G$,
then formula \eqref{eq:area2step} holds. In addition, if $d$ is multiradial, 
we can find $\omega_d(\n,\NN)$ and define $\cS^\NN_d$ as in Theorem~\ref{t:area2steps} such that \eqref{eq:area2stepmultiradial} holds.
\ec

\begin{proof}
By \cite[Corollary~1.2]{Mag12B} the condition $\cS_0^\NN\pa{\cC_\Sigma}=0$
is satisfied. Due to the $C^{1,1}$ smoothness we may apply \cite[Lemma~3.9]{Mag13Vit}, that joined with Proposition~\ref{pr:homtansp} and Proposition~\ref{pr:translation} proves that every point of $\cM_\Sigma$ 
is algebraically regular. Thus, arguing as in the proof of Theorem~\ref{t:area2steps} our claim is immediately achieved.
\end{proof}
A concrete application of the previous corollary is given in the next example.
\bex\rm
Let us consider the submanifold $\Sigma=\Phi(U)$,
where $U\subset\R^\n$ is an open, bounded and connected set.
Assume that $\tilde U$ is an open set with $\overline U\subset\tilde U$
and $\Phi:\tilde U\to\G$ is a $C^{1,1}$ smooth embedding
into an H-type group $\G$. Consider $\G$ equipped with the multiradial distance
\[
d(x,0)=\sqrt[4]{|x_1|^4+16|x_2|^2}, 
\]
as in Example~\ref{ex:htype} and let $|\cdot|$ the fixed Euclidean norm 
on $\G$. Let $y=(y_1,\ldots,y_\n)$ be the coordinates of $\Sigma$, let $\NN$ be its degree and define the constant
\[
\omega_d(\n,\NN)=\cH^\n_{|\cdot|}\pa{\set{x\in\G: d(x,0)\le1}\cap V}
\]
where $V=V_1\oplus V_2$ is any homogeneous subspace
of $\G$ with $\dim V_1=\n_1$, $\dim V_2=\n_2$ and
$\NN=\n_1+2\n_2$. By Proposition~\ref{pr:constantmetfact2step}
the geometric constant $\omega_d(\n,\NN)$ is independent of 
the choice of $V$, with the previous dimensional constraints.
Defining the spherical measure $\cS^\NN_d=\omega_d(\n,\NN)\cS^\NN_0$,
as a consequence of Corollary~\ref{c:2stepC^11} and of formula
\eqref{e:intmeaslocalchart}, we get
\beq\label{eq:SNarea2step}
\cS^\NN_d\pa{\Sigma}=\int_U \|\pi_{\Phi(y),\NN}\lls \der_{y_1}\Phi(y)\wedge\cdots\wedge\der_{y_\n}\Phi(y)\rrs\|_g\,dy,
\eeq
where $g$ is the left invariant Riemannian metric associated
to the fixed Euclidean norm on $\G$, as indicated in Section~\ref{sect:Graded}.
Clearly, whenever we have a multiradial distance $d$ on a two step
homogeneous group the area formua \eqref{eq:SNarea2step} holds,
with a different geometric constant $\omega_d(\n,\NN)$ defining
$\cS^\NN_d$.
\eex

%%%%%%%%%%%%%%%%%%%%%%%%%%%%%%%%%%%%%%%%%%%%
%
%
%
%								MEASURE OF CURVES
%
%
%
\subsection{Curves in homogeneous groups}\label{sect:MeasCurves}
%
%
%intrinsic or spherical measure
%
%
%
%
%%%%%%%%%%%%%%%%%%%%%%%%%%%%%%%%%%%%%%%%%%%%

In this section
we see how the formula for the spherical measure of curves
takes a simpler form for multiradial distances, hence 
improving previous some results of \cite{Mag21Korte}.

\bt[Intrinsic measure of curves]\label{t:curves}
Let $\Sigma\subset\G$ be a $C^1$ smooth embedded curve
of degree $\NN$, let $\tilde g$ be a fixed Riemannian metric and 
consider a Borel set $B\subset\Sigma$. There holds
\beq\label{eq:measure_curves}
\int_B \|\tau^{\tilde g}_{\Sigma,\NN}(p)\|_g\, d\sigma_{\tilde g}(p)
=\int_B \beta_d(A_p\Sigma)\, d\cS^\NN_0(p).
\eeq
If the homogeneous distance $d$ on $\G$ is multiradial, then the
spherical factor $\beta_d(L)$ is constant on all one dimensional subspaces
$L\subset H^\NN$. Thus, denoting this constant by 
$\omega_d(1,\NN)$ and setting $\cS^\NN_d=\omega_d(1,\NN)\, \cS^\NN_0$,  for evey Borel set $B\subset\Sigma$ we get
\beq\label{eq:measure_curves_multiradial}
\cS^\NN_d\res\Sigma(B)=\int_B \|\tau^{\tilde g}_{\Sigma,\NN}(p)\|_g\, d\sigma_{\tilde g}(p).
\eeq
\et
\begin{proof}
Taking into account the definitions \eqref{eq:S_Sigma-R_Sigma},
Theorem~1.1 of \cite{Mag21Korte} gives 
\beq\label{eq:negligib_curves}
\cS_0^\NN(\cC_\Sigma)=0.
\eeq
From the definition of intrinsic measure and taking into account
Remark~\ref{r:degreeTangent}, one also notices that
$\mu_\Sigma(\cC_\Sigma)=0$.
At any point $p$ of $\cM_\Sigma$, the $\NN$-projection $\pi_{p,\NN}(\tau_\Sigma(p))$ is obviously a vector, hence the homogeneous tangent space $A_p\Sigma$ is automatically a one dimensional subgroup of $\G$.
This shows that any point of $\cM_\Sigma$ is algebraically regular.
As a consequence, considering any Borel set $E\subset\cM_\Sigma$,
we apply part (3) of Theorem~\ref{t:UpBlwC} at each point $p\in E$, getting 
\beq\label{eq:betaA_pSigma}
\theta^\NN(\mu_\Sigma,x)=\beta_d(A_p\Sigma).
\eeq
In particular, the finiteness of the spherical Federer density $\theta^\NN(\mu_\Sigma,\cdot)$ on $E$ yields the absolute continuity of $\mu_\Sigma\res E$ with respect to $\cS_0^\NN\res E$.
Joining the measure theoretic area formula \eqref{eq:spharea} with the
negligibility condition \eqref{eq:negligib_curves} our claim \eqref{eq:measure_curves} follows.

We now assume the $d$ is multiradial and
set $\B=\set{x\in\G: d(x,0)\le1}$. Let us consider 
a one dimensional subspace $L$ of $H^\NN$ and observe that
\[
L\cap\B(z,1)=\set{v\in L:  z^{-1}v\in\B }.
\]
Since $d$ is multiradial, we have
\[
L\cap\B(z,1)
=\set{v\in L: \ph(|P_{H^1}(z^{-1}v)|,\ldots,|P_{H^\iota}(z^{-1}v|)\le 1 }.
\]
The nondecreasing monotonicity of $\ph$ with respect to each variable shows that
\[
%\begin{split}
L\cap \B(z,1)\subset\Big\{v\in V: 
\ph\lls\underbrace{0,\ldots,0}_{\NN-1\ \text{zeros}},|v-P_{H^\NN}(z)|,0,\ldots,0\rrs\le 1\Big\}=\zeta_\NN+C, 
%\end{split}
\]
where $\zeta_\NN=P_{H^\NN}(z)$ and 
$J_\NN=\set{v\in L: \ph(0,\ldots,0,|v|,0,\ldots,0)\le 1}$.
The previous inclusion yields
\[
\cH^1_{|\cdot|}\pa{L\cap \B(z,1)}\le 
\cH^1_{|\cdot|}\pa{\zeta_1+J_\NN}=\cL^1(J_\NN).
\]
Observing that   
\[
\cH^1_{|\cdot|}\pa{L\cap \B}=
\cH^1_{|\cdot|}\pa{\set{v\in L: \ph(0,\ldots,0,|v|,0,\ldots,0)\le 1}}
=\cL^1(J_\NN),
\]
we have shown that $\beta_d(L)=\cL^1(J_\NN)$, independently
of the choice of the linear subspace $L$ in $H^\NN$.
Finally, defining $\omega_d(1,\NN)=\beta_d(L)$ and 
$\cS^\NN_d=\omega_d(1,\NN)\cS^\NN_0$, formula \eqref{eq:measure_curves} takes the form \eqref{eq:measure_curves_multiradial}, concluding the proof.
\end{proof}

\br
Notice that \eqref{eq:measure_curves_multiradial} holds
also for 1-vertically symmetric distances, assuming that 
the curve is transversal, see \eqref{eq:transv_area_nsymmetric}.
In particular, we have the following area formula 
\beq\label{eq:measure_curves_Heisenberg}
\cS^2_d\res\Sigma(B)=\int_B \|\tau^{\tilde g}_{\Sigma,\NN}(p)\|_g\, d\sigma_{\tilde g}(p).
\eeq
for any $C^1$ smooth nonhorizontal curve $\Sigma\subset\H^n$
of the $(2n+1)$-dimensional Heisenberg group
and for any homogeneous distance.
Indeed, from Definition~\ref{d:hsym} one may easily notice that
in $\H^n$ any homogeneous distance is automatically
1-vertically symmetric. In particular, formula \eqref{eq:measure_curves_Heisenberg} holds
for the sub-Riemannian distance of $\H^n$, that
is not multiradial.
\begin{comment}
se partiamo da uno dei punti della sfera subriemanniana
avente la terza coordinata pi\`u grande (costituiscono una circonferenza)
e mantenendo la stessa altezza avviciniamo tale punto
all'asse verticale il punto esce dalla sfera e quindi aumenta
la distanza dall'origine pur avendo diminuito il modulo
della proiezione del punto sullo spazio orizzontale.
\end{comment}
\er

%%%%%%%%%%%%%%%%%%%%%%%%%%%%%%%%%%%%%%%%%%%%
%
%
%
%								HORIZONTAL SUBMANIFOLDS
%
%
%
\section{Spherical measure of horizontal submanifolds}\label{sect:HorizSubmanifolds}
%
%
%
%
%
%
%
%%%%%%%%%%%%%%%%%%%%%%%%%%%%%%%%%%%%%%%%%%%%

Our aim in this section is to establish integral formulae for the spherical measure of horizontal submanifolds and to relate it to their Hausdorff measure.

\bpr\label{pr:MetricHoriz}
If $d$ is a multiradial distance on $\G$, then there exists
a geometric constant $\omega_d(\n,\n)$ such that 
\beq\label{eq:betanV}
\beta_d(V)=\cH^\n_{|\cdot|}(\B\cap V)=\omega_d(\n,\n)
\eeq
for any $\n$-dimensional horizontal subgroup $V\subset H^1$,
where $\B=\set{x\in\G: d(x,0)\le 1}$.
\epr
\begin{proof}
Consider an $\n$-dimensional horizontal subgroup $V\subset H^1$ and the intersection
\[
V\cap\B(z,1)=\set{v\in V: z^{-1}v\in\B }.
\]
Since $d$ is multiradial, we have 
\[
V\cap\B(z,1)=\set{v\in V: \ph(|P_{H^1}(z^{-1}v)|,\ldots,|P_{H^\iota}(z^{-1}v|)\le 1 }
\]
and the monotonicity properties of $\ph$ give
\[
V\cap \B(z,1)\subset\set{v\in V: 
\ph(|v-P_{H^1}(z)|,0,\ldots,0)\le 1}=\zeta_1+C, 
\]
where $\zeta_1=P_{H^1}(z)$ and $C=\set{v\in V: \ph(|v|,0,\ldots,0)\le 1}$.
It follows that
\[
\cH^\n_{|\cdot|}\pa{V\cap \B(z,1)}\le 
\cH^\n_{|\cdot|}\pa{\zeta_1+C}=\cH^\n_{|\cdot|}(C)
=\cH^\n_{|\cdot|}\pa{\B\cap V}.
\]
This proves \eqref{eq:betanV}, along with the fact that
$\beta_d(V)$ does not depend on the choice of the 
$\n$-dimensional horizontal subgroup $V$.
\end{proof}

\bt[Horizontal submanifolds]\label{t:AreaHoriz}
If $\Sigma\subset\G$ is an $\n$-dimensional horizontal submanifold, 
then for every Borel set $B\subset \Sigma$ we have
\beq\label{eq:horiz_area}
\mu_\Sigma(B)=\int_B \|\tau^{\tilde g}_{\Sigma,\NN}(p)\|_g\, d\sigma_{\tilde g}(p)
=\int_B \beta_d(A_p\Sigma)\, d\cS^\n_0(p),
\eeq
where $\tilde g$ is any fixed Riemannian metric.
If $d$ is multiradial, then for any homogeneous tangent
space $V$ of $\Sigma$, the spherical factor $\beta_d(V)$ 
equals a geometric constant $\omega_d(\n,\n)$ and defining $\cS^\n_d=\omega_d(\n,\n)\, \cS^\n_0$, there holds
\beq\label{eq:horiz_area_nsymmetric}
\cS^\n_d\res\Sigma(B)
=\int_B \|\tau^{\tilde g}_{\Sigma,\NN}(p)\|_g\, d\sigma_{\tilde g}(p).
\eeq
\et
\begin{proof}
From the definition of horizontal submanifold, all points
of $\Sigma$ are algebraically regular. In view of Remark~\ref{r:horiz_degree},
we also observe that all points of $\Sigma$ have degree $\n$. 
As a result, $\Sigma=\cM_\Sigma$ and choosing any Borel set $B\subset\Sigma$, 
we apply part (1) of Theorem~\ref{t:UpBlwC} to each $p\in B$,
getting formula \eqref{e:UpBlwSub}.
The everywhere finiteness of the spherical Federer density on $B$ joined with
the measure theoretic area formula \eqref{eq:spharea} 
lead us to the integration formula \eqref{eq:horiz_area}. 
In the case $d$ is multiradial, Proposition~\ref{pr:MetricHoriz} allows
us to define the geometric constant $\omega_d(\n,\n)=\beta_d(V)$,
independent of the choice of the homogeneous tangent space $V$ 
at any point of $\Sigma$. Then \eqref{eq:horiz_area} immediately
gives \eqref{eq:horiz_area_nsymmetric}, concluding the proof.
\end{proof}

As a consequence of the previous theorem,
joined with the area formula of \cite{Mag}, we will 
find the formula relating spherical measure
and Hausdorff measure on horizontal submanifolds.

A multiradial distance $d$ is fixed from now on.
We consider the set function $\phi^k_\delta$ of \eqref{d:phialpha}
with respect to $d$, where $\cF$ is the family of all closed sets
$\alpha=\n$. In the sequel,
we consider the Hausdorff measure
\beq\label{d:chH^n_d}
\cH^\n_0(E)=\sup_{\delta>0}\phi_\delta^\n(E)
\eeq
for every $E\subset\G$. We will also use the ``normalized Hausdorff measure''
\beq\label{d:omega_dH^n}
\cH^\n_d=\omega_d(\n,\n) \cH^\n_0,
\eeq
where $\omega_d(\n,\n)$ is defined in \eqref{eq:betanV}.

\bl\label{l:inv_multiradial}
Let $d$ be a multiradial distance on $\G$ and let
$V,W\subset H^1$ be horizontal subgroups of dimension $\n$. 
It follows that 
\beq\label{eq:HausIsod}
\cH^\n_0(\B\cap V)=\cH^\n_0(\B\cap W)=1,
\eeq
with $\B=\set{x\in\G: d(x,0)\le 1}$.
\el
\begin{proof}
Let us consider the Euclidean isometry $\widetilde T:V\to W$
with respect to the fixed graded Euclidean norm $|\cdot|$ on $\G$.
The same scalar product defines the multiradial distance, see \eqref{eq:dmulti}.
For each $x,y\in V$ there holds
\[
d(\widetilde Tx,\widetilde Ty)=d((\widetilde Tx)^{-1}\widetilde T(y),0)=
d(\widetilde Ty-\widetilde Tx,0)=d(\widetilde T(y-x),0)
\]
where the last equality follows by the fact that $W$ is commutative.
By definition of $d$, we have
\[
d(\widetilde Tx,\widetilde Ty)=\ph(|\widetilde Ty-\widetilde Tx|,0,\ldots,0)
=\ph(|y-x|,0,\ldots,0)=d(x,y),
\]
where the last equality holds, due to the commutativity of $V$.
Choosing proper orthonormal bases, we extend $\widetilde T$ to an isometry
$T:H^1\to H^1$ with respect to $|\cdot|$ such that 
$T|_V=\widetilde T$. Since $d$ is multiradial it is easy to observe that 
\[
T(\B\cap H^1)=\B\cap H^1.
\]
By definition of $\widetilde T$, it follows that
\beq\label{eq:TBcapV}
T(\B\cap V)=T(\B\cap H^1\cap V)=\B\cap H^1\cap \widetilde T(V)=\B\cap W.
\eeq
We now consider the Hausdorff measures  
\[
\widetilde\cH^\n_V:\cP(V)\to[0,+\infty] \qandq 
\widetilde\cH^\n_W:\cP(W)\to[0,+\infty]
\]
defined in \eqref{d:chH^n_d}, but where the metric space $\G$
is replaced by the horizontal subgroups $V$ and $W$, respectively.
Since $\widetilde T$ is an isometry also with respect to $d$,
taking into account \eqref{eq:TBcapV} and the standard
property of Lipschitz functions with respect to the Hausdorff measure,
we get
\[
\widetilde\cH^\n_W(\B\cap W)=
\widetilde\cH^\n_W(\widetilde T(\B\cap V))=
\widetilde\cH^\n_V(\B\cap V).
\] 
Exploiting the special property of the Hausdorff meausure about restrictions
\[
\widetilde\cH^\n_V=\cH^\n_0|_{\cP(V)} \qandq 
\widetilde\cH^\n_W=\cH^\n_0|_{\cP(W)}
\]
the first equality of \eqref{eq:HausIsod} follows.
Finally, we apply the isodiametric inequality in finite dimensional Banach spaces, see for instance \cite[Theorem 11.2.1]{BuragoZalgaller1988},
and observe that the restriction $\|x\|_d=d(x,0)$ 
for $x\in V$  yields a Banach norm, due to the commutativity of $V$.
By standard arguments, the isodiametric inequality in the Banach space
$(V,\|\cdot\|_d)$ gives $\widetilde \cH_V^\n(\B\cap V)=1$, therefore concluding the proof.
\end{proof}
We can now prove the main result of this section.
\begin{proof}[Proof of Theorem~\ref{t:HausdSpheric}]
Since our argument is local, it is not restrictive to
consider an open set $\Omega\subset\R^\n$ and 
assume that there exists a $C^1$ smooth 
embedding $\Psi:\Omega\to\G$ such that $\Sigma=\Psi(\Omega)$.
\begin{comment}
We choose the graded Euclidean Riemannian metric $\delta$,
that on the fixed graded coordinates $(x_1,\ldots,x_\q)$ of $\G$
yields the constant identity matrix $\delta_{ij}$.
\end{comment}
Joining Proposition~\ref{pr:intmeaslocalchart} 
and Theorem~\ref{t:AreaHoriz}, for every open subset $H\subset\Omega$ there holds
\beq\label{eq:S^nSigma}
\cS^\n_d\res\Sigma\lls \Psi(H)\rrs=\int_H \|\pi_{\Psi(y),\n}\lls \der_{y_1}\Psi(y)\wedge\cdots\wedge\der_{y_\n}\Psi(y)\rrs\|_g\,dy.
\eeq
From the area formula of \cite{Mag}:
\[
\cH^\n_d(\Psi(H))=\int_H \frac{\cH^\n_d(D\Psi(x)(B_E))}{\cL^\n(B_E)} dx,
\]
where $D\Psi(x):\R^\n\to\G$ is the Lie group homomorphism
defining the differential, see \cite{Mag} for more information.
For each $x\in\Omega$ both $\cH^\n_d$ and $\cH^n_{|\cdot|}$ 
are Haar measures on the horizontal subgroup $V_x=D\Psi(x)(\R^\n)\subset H^1$, therefore
\[
\cH^\n_d\res V_x=\frac{\cH^\n_d(V_x\cap\B)}{\cH^\n_{|\cdot|}(V_x\cap \B)}\cH^\n_{|\cdot|}\res V_x.
\]
The Haar property of these measures follows from the commutativity
of $V_x$, hence the BCH yields
$yA=y+A$, whenever $y\in V_x$ and $A\subset V_x$.
By Proposition~\ref{pr:MetricHoriz}
and definition \eqref{d:omega_dH^n} we have
\beq\label{d:gamma_dnH^n}
\cH^\n_d\res V_x=\cH^\n_0(V_x\cap\B)\,\cH^\n_{|\cdot|}\res V_x
=\cH^\n_{|\cdot|}\res V_x,
\eeq
in view of Lemma~\ref{l:inv_multiradial}. We have proved that
\beq\label{eq:areaf_d_constants}
\cH^\n_d(\Psi(H))=\int_H \frac{\cH^\n_{|\cdot|}(D\Psi(x)(B_E))}{\cL^\n(B_E)} dx.
\eeq
The Lie group homomorphism $D\Psi(x):\R^k\to\G$ is defined as the limit
\beq\label{eq:limL(w)}
D\Psi(x)(v)=\lim_{t\to0^+}\delta_{1/t}\pa{\Psi(x)^{-1}\Psi(x+tv)},
\eeq
that exists for all $x\in \Omega$, in view of \cite[Theorem~1.1]{Mag14}.
Exploiting the BCH formula in the limit \eqref{eq:limL(w)} 
and the fact that the image of $D\Psi(x)$ must be in a horizontal subgroup,
we have
\beq\label{eq:dPsi(h)}
D\Psi(x)(h)=d\widetilde\Psi(x)(h) \in V_x\subset\G,
\eeq
where $\widetilde\Psi=P_{H^1}\circ\Psi$ and $h$ is any vector of $\R^\n$.
Finally, the expression
\[
\pi_{\Psi(y),\n}\lls \der_{y_1}\Psi(y)\wedge\cdots\wedge\der_{y_\n}\Psi(y)\rrs
\]
can be more explicitly written as
\[
\sum_{j_1,\ldots,j_\n=1}^\m\pi_{\Psi(y),\n}\lls (\der_{y_1}\Psi(x))^{j_1} X_{j_1}(\Psi(x))\wedge\cdots\wedge\der_{y_\n}\Psi(x))^{j_1} X_{j_1}(\Psi(x))\rrs
\]
where the special polynomial form of the vector fields $X_j$ implies that
the component $(\der_{y_i}\Psi(x))^j$ of $\der_{y_i}\Psi(x)$
with respect to the basis $(X_1(\Phi(x)),\ldots,X_\m(\Phi(x))$
coincides with $\der_{y_1}\Psi^j(x)$. This allows us to conclude that 
\[
\|\pi_{\Psi(x),\n}\lls \der_{y_1}\Psi(x)\wedge\cdots\wedge\der_{y_\n}\Psi(x)\rrs\|_g=J\tilde\Psi(x)=
\frac{\cH^\n_{|\cdot|}(d\tilde\Psi(x)(B_E))}{\cL^\n(B_E)}.
\]
As a consequence, by \eqref{eq:dPsi(h)},
\eqref{eq:areaf_d_constants} and \eqref{eq:S^nSigma}, our claim follows.  
\end{proof}

\bibliography{bibtex-Area}{}
\bibliographystyle{plain}

\end{document}